\documentclass[letter]{amsart}
\usepackage{amssymb,latexsym,amsfonts,amsmath,amsthm}
\usepackage{graphics,graphicx,epic,shadow}
\usepackage{dsfont}
\usepackage{enumerate}
\newtheorem{theorem}{Theorem}[section]

\newtheorem{definition}[theorem]{Definition}

\numberwithin{equation}{section}

\newcommand{\Z}{\mathbb{Z}}
\newcommand{\R}{\mathbb{R}}
\newcommand{\N}{\mathbb{N}}
\newcommand{\C}{\mathbb{C}}

\newcommand{\supp}{\operatorname{supp}}

\renewcommand{\S}{\mathcal{S}}

\newcommand{\ident}{w}

\setlength{\textheight}{22cm} \setlength{\textwidth}{15cm}
\setlength{\oddsidemargin}{1 cm}\setlength{\topmargin}{0 cm}
\setlength{\evensidemargin}{1 cm}

\pagestyle{plain}

\begin{document}
\allowdisplaybreaks
\title{ 
Local sampling and approximation of operators with bandlimited Kohn-Nirenberg symbols
}
\author{Felix Krahmer  and G\"{o}tz E. Pfander}{\let\thefootnote\relax\footnotetext{Felix Krahmer is with the University of G\"ottingen, Institute for Numerical and Applied Mathematics, Lotze\-stra\ss e 16-18, 37083 G\"ottingen, Germany, Tel.:\,+49 551 39 10584, Fax:\,+49 551 39 3944, f.krahmer@math.uni-goettingen.de.

G\"otz E. Pfander is with Jacobs University Bremen, School of Engineering and Science, Campus Ring 12, 28759 Bremen, Germany.
Tel.: +49 421 200 3211, Fax:\,+49 421 200 49 3211, g.pfander@jacobs-university.de.
} }
\date{\today}
\maketitle

\begin{abstract} 
Recent sampling theorems allow for the recovery of operators with bandlimited Kohn-Nirenberg symbols from their response to a single discretely supported identifier signal. The available results are inherently non-local.  For example, we show that in order to recover a bandlimited operator precisely, the identifier cannot decay in time nor in frequency. Moreover, a concept of local and discrete representation is missing from the theory.
In this paper, we develop tools that address these shortcomings.

We show that to obtain a local approximation of an operator, it is sufficient to test the operator on a truncated and mollified delta train, that is, on a compactly supported Schwarz class function. 
To compute the operator numerically, discrete measurements 
can be obtained from the response function which are localized in the sense that a local selection of the values  yields a local approximation of the operator. 

Central to our analysis is to  conceptualize the meaning of localization for operators with bandlimited Kohn-Nirenberg symbol.\\[.2cm]
\noindent
{\it Keywords.}\ \  Operator identification, pseudodifferential operators, Kohn-Nirenberg symbol, time frequency localization, local approximation, tight Gabor frames.\\[.1cm]
{\it 2010 Mathematics Subject Classification.} \ \ Primary 41A35, 94A20; Secondary 42B35, 47B35, 47G30, 94A20.
\end{abstract}


\section{Introduction}

In {\em communications engineering}, the effect of a slowly time-varying communication channel is commonly modeled as superposition of  translations (time shifts due to multipath propagation) and modulations (frequency shifts caused by Doppler effects). In order to recover transmitted signals from their channel outputs, precise knowledge of the nature of the channel is required. A common procedure for channel identification in this sense is to periodically send  short duration test signals. The resulting outputs are then used to estimate channel parameters which allow for an inversion of the operator \cite{ka62,Bel69,Pfander06-3,Pfander06-2,BGE11,HB12}.

Kailath \cite{ka62} and Bello \cite{Bel69} analyzed the identifiability of such channels. In mathematical terms, the channels considered are characterized by bandlimited Kohn-Nirenberg symbols  
and the channel identification problem becomes an operator identification problem: can an operator with bandlimited Kohn-Nirenberg symbol be identified from the output corresponding to a given test input signal?

Kozek and Pfander \cite{Pfander06-3}, and Pfander and Walnut \cite{Pfander06-2} gave mathematical proof of the assertions by Kailath and Bello that there exists a suitable test signal as long as the band support of the symbol of the operator has outer Jordan content less than one. The suggested test signals  are periodically weighted regularly spaced Dirac-delta distributions as introduced in  \cite{Pfander06-2}. In \cite{Pfa10}, Pfander coined the term  {\em operator sampling} as the resulting theory has many direct parallels to the sampling theory for bandlimited functions. For example, an operator sampling reconstruction formula was established which generalizes  the reconstruction formula in the classical sampling theorem for bandlimited functions (see \cite{Pfa10} and Theorem~\ref{thm:main-simple} below).  

As the test signals which appear in the results of \cite{Pfander06-3,Pfander06-2,Pfa10,PW12} decay neither in time nor in frequency, they cannot be realized in practice. In this paper, we show that indeed,  for stable identification of operator classes defined by a bandlimitation of the Kohn-Nirenberg symbol, test signals that lack decay in time and frequency are necessary.   When seeking to recover only the operator's action on a time-frequency localized subspace, however, this ideal but impractical signal can be replaced with a mollified and truncated copy; the test signal can thereby be chosen to be a compactly supported Schwartz function as shown below.  
 
Furthermore, an important difference to the sampling theory for bandlimited functions is that the response to a test signal in operator sampling is a square-integrable function rather than a discrete set of sample values. While there are many ways to discretely represent the response function, the question remains which of the multitude of commonly considered representations allow to recover the operator most efficiently. In the case of a bandlimited function, one  feature that distinguishes the representation by samples is locality: a sample is the function value at a given location;  due to the smoothness of bandlimited functions it represents the function in the neighborhood of the sampling point. A key consequence of this feature is that it allows to approximate the function in a given region using only samples taken in  a fixed-size neighborhood of it. 

In this paper we develop discrete representations of operators with bandlimited Kohn-Nirenberg symbols that, on the one hand, can be computed in a direct and simple way from the output corresponding to a test signal and, on the other hand, have  locality properties analogous to those we appreciate in the classical sampling theory. We work with the same concept of locality as in the localized sampling results mentioned above, namely, locality will be defined through the action of the operator on time-frequency localized functions. Combining the two parts, we obtain that time-frequency measurements of the output corresponding to a truncated and mollified weighted sum of Dirac delta distributions yield a local discrete representation of a bandlimited operator. 

The paper is organized as follows. In Section~\ref{sec:background} we recall operator sampling terminology in some detail and discuss previous results. We then summarize our main results in Section~\ref{sec:results}.  Section~\ref{sec:local} provides results on local approximations of operators; in Section~\ref{section:identifier} we discuss identification using smooth and finite duration test signals, and Section~\ref{sec:Op_id} uses Gabor frames to derive our novel discretization scheme for operators with bandlimited Kohn-Nirenberg symbols.

\section{Background}\label{sec:background}
\subsection{Symbolic calculus} 
The Schwartz kernel theorem states that every continuous linear operator \hbox{$H:\mathcal{S}(\R)\to\mathcal{S}'(\R)$} is of the form $$Hf(x) = \int  \kappa (x,t)f(t)dt$$ for a unique {\it kernel} $\kappa\in \mathcal{S}'(\R^{2})$, where $\mathcal{S}(\R^d)$ is the Schwartz space, and $\mathcal{S}'(\R^d)$ is its dual, the space of tempered distributions \cite{Folland89}. This integral representation is understood in the weak sense, that is, $$ \langle Hf,g\rangle = \langle \kappa,\overline f{\otimes} g\rangle $$ for all $f,g\in \mathcal{S}(\R)$, where $f{\otimes} g (x,y) = f(x)g(y)$ and $\langle\, \cdot\,,\,\cdot\,\rangle$ is the sesquilinear pairing between $\mathcal{S}(\R)$ and $\mathcal{S}'(\R)$, and as $\mathcal{S}(\R)$ continuously embeds into $L^2(\R)$, Schwartz kernel representations exist in particular for bounded operators on $L^2(\R^d)$.

Each such operator has consequently a {\it spreading function} representation 
\begin{equation}\label{eqn:spreadingrepresentation}
 Hf(x) = \iint \eta(t,\gamma) e^{2\pi i \gamma x} \, f(x-t)\ \,dt\, d\gamma,
\end{equation}
a {\em time-varying impulse response} representation
\begin{equation*}
Hf(x)=\int    h(x,t) f(x-t) dt,
\end{equation*}
and a {\it Kohn-Nirenberg symbol} representation 
\begin{equation}\label{eqn:knrepresentation}
Hf(x) = \int  \sigma(x,\xi)\widehat{f}(\xi)\,e^{2\pi ix\xi}\ d\xi,
\end{equation}
where the Fourier transform  $\widehat f$ is  normalized by  $$\mathcal F f(\xi)=\widehat f(\xi) = \int  f(t)\, e^{-2\pi i t\xi}\, dt$$ for integrable $f$. 

We write $H_{\sigma}$ and $\sigma_H$, $\eta_{H}$, $\kappa_{H}$ when it is necessary to emphasize the correspondence between $H$ and $\sigma$, $\eta$, $\kappa$. The symbols $\sigma$ and $\eta$ are related via the {\it symplectic Fourier transform } 
 $\mathcal F_s$ which  is defined densely by
\begin{equation*}
\mathcal F_s \sigma(t,\gamma)=\iint\sigma(x,\xi) e^{-2\pi i(x\gamma -t\xi)}\,dx\, d\xi\,,
\end{equation*}
that is, $\sigma=\mathcal F_s \eta$.

For convenience, we use the notation $\boldsymbol{\eta}_H(t,\gamma) = e^{2\pi i\gamma t}\eta_H(t,\gamma)$ and $\boldsymbol{\sigma}_H$ for its symplectic Fourier transform. A straightforward computation shows $\sigma_{H^\ast} =  \overline{\boldsymbol{\sigma}_H},
$
where $H^{\ast}$ denotes the adjoint of $H$. In our proofs, we shall frequently transition from  $\sigma$ to $\boldsymbol \sigma$. This does not cause a problem in our analysis since inequality \eqref{eqn:equivalentnorms} below combined with $\|H\|_{{\mathcal L}({L^2(\R)})}=\|H^\ast \|_{{\mathcal L}({L^2(\R)})}$ shows that for $M\subseteq \R^2$ compact there exist $A,B>0$ with 
\begin{equation}\label{eqn:iso}
A \|\sigma\|_{L^\infty(\R^2)}\leq \|\boldsymbol \sigma\|_{L^\infty(\R^2)}\leq B\|\sigma\|_{L^\infty(\R^2)}
\end{equation}
for all $H_\sigma \in OPW(M)$.

\subsection{Sampling in operator Paley Wiener spaces}
\quad
Following \cite{ka62, Bel69, Pfander06-3,PW12, Pfa10}, the operators considered in this paper are assumed to have strictly bandlimited Kohn-Nirenberg symbols, that is, they have compactly supported spreading function. Slowly time-varying mobile communications channels may violate this assumption \cite{Strohmer06}; a more refined model is that the spreading function exhibits rapid decay.  Still this suffices to guarantee that truncating the spreading function introduces a global error that can be controlled. For example, applying Theorem~\ref{thm:main-simple} to an operator $H$ whose spreading function has $L^2$ distance  at most $\epsilon$ to a function supported on a rectangle of area one
results in an operator which differs from $H$  differs by at most $2\epsilon$ in operator norm (see \cite{KraPfa13b} for further details). This justifies to restrict to the simpler model of strictly bandlimited symbols.

The space of bounded operators  whose Kohn-Nirenberg symbols are bandlimited to a given set $M$ --- we will also use the shorthand terminology {\em bandlimited operators} ---  is called {\em operator Paley-Wiener space}\footnote{In general terms, operator Paley-Wiener spaces are defined by requiring its members to have bandlimited Kohn-Nirenberg symbols which are in a  prescribed weighted and mixed $L^p$ space \cite{Pfa10}. For example, to restrict the attention to bandlimited Hilbert-Schmidt operators, we would consider only operators with square integrable symbols. These form a subset of the operators considered in this paper.}; it is denoted by
$$ OPW(M) = \{H\in \mathcal L (L^2(\R)):\  \ \supp \mathcal F_s \sigma_H\, \subseteq M\}. $$ 

The Kohn-Nirenberg symbol of an $L^2$-bounded operator with   $\supp \mathcal F_s \sigma_H$ compact is bounded. In fact,  for some $A,B>0$ we have,
\begin{align}
 \label{eqn:equivalentnorms}
 A\|\sigma_H \|_{L^\infty(\R)}\leq \|H\|_{\mathcal L(L^2(\R))}\leq B \|\sigma_H \|_{L^\infty(\R)}\,, \quad H\in OPW(M),
\end{align}
where $\|H \|_{\mathcal L(L^2(\R))}$ is the operator norm of $H$  (Proposition~\ref {thm:normequiv} below). 

\vspace{.2cm}
Certainly, if we have direct access to $\sigma_H$, then some of our approximation theoretic goals can be accomplished using classical two-dimensional sampling results applied to $\sigma_H$.  In the model considered here, however, we do not have access to any of the values of the symbol $\sigma_H$ of the operator $H$ directly, but we must rely on the operator output $H\ident $ which results from applying $H$ to a single test input $\ident$. Due to stability consideration, we say that the linear space $OPW(M)$ is identifiable by $\ident$ if for $A,B>0$ we have
\begin{align}
 \label{eqn:identifable}
 A\|H\|_{\mathcal L(L^2(\R))}\leq \|H\ident\|_{L^2(\R)} \leq B\|H\|_{\mathcal L(L^2(\R))}\,, 
\end{align}
for all $  H\in OPW(M)$ \cite{Pfander06-3}.
``Sampling'' the operator means that the identifier $\ident$ in \eqref{eqn:identifable}  is a weighted sequence of Dirac delta distributions, that is,
\begin{equation*}
\ident = \sum_{k\in\Z}c_k\delta_{kT},
\end{equation*}
where  $c_k$ is an appropriately chosen periodic sequence \cite{lapfwa05,Pfander06-2,Pfa10}.

A guiding paradigm in the sampling theory of operators is the direct analogy to  sampling of bandlimited functions. To illustrate this analogy, we compare the classical sampling theorem (often credited to Cauchy, Kotelnikov, Shannon, and Whittaker, among others), Theorem~\ref{thm:classical}, with the corresponding result for operators, Theorem~\ref{thm:main-simple} \cite{Pfa10}. Note that Theorem~\ref{thm:classical} formally follows from Theorem~\ref{thm:main-simple} by choosing the operator $H$ in Theorem~\ref{thm:main-simple} to be the pointwise multiplication operator $f\mapsto \sigma \cdot f$ \cite{Pfa10}.

The engineering intuition underlying sampling theorems is that reducing a function  to periodic samples at a rate of $1/T$ samples per unit interval corresponds to a periodization with shift $1/T$ in frequency space \cite{OppSchBu99}. Thus, as long as $ T\Omega \leq 1$, a function bandlimited to $[-\frac \Omega 2, \frac \Omega 2]$ can be recovered via a convolution with a low-pass kernel, that is, a function $\phi$ that satisfies
\begin{equation}\label{eq:lowpass}
 \widehat \phi (\xi) = \begin{cases} 1/\Omega,  \quad &\text{if } |\xi|\leq \frac{\Omega}{2}\,,\\ 
                         0, \quad &\text{if } |\xi|\geq \frac{1}{2T}\,.
                       \end{cases}
\end{equation}
If  $T\Omega=1$, the only such function is the sinc kernel
$\phi(t)=    \operatorname{sinc}(\pi t/T)=   \frac{\sin (\pi  t/T)}{ \pi  t/T}$. For $T\Omega<1$, there are many such functions; in particular $\phi$ in the Schwartz class is possible. With this notion, the classical sampling theorem reads as follows.

\begin{theorem}\label{thm:classical}
  For $g\in  L^2(\R)$ with $\supp \mathcal F g\subseteq [-\frac \Omega 2, \frac \Omega 2]$ and  $T\Omega\leq 1$, we have
  \begin{align}
    g(x)=\sum_{n\in\Z} g(nT)\, \phi (x-nT)\label{eqn:functionreconstruction-simple}
  \end{align}
  with uniform convergence and convergence in $L^2(\R)$. 
Here, $\phi$ is any low-pass kernel satisfying \eqref{eq:lowpass}.
\end{theorem}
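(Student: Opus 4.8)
\medskip
\noindent\emph{Proof idea.} The plan is the classical one: identify the right-hand side of \eqref{eqn:functionreconstruction-simple} with the inverse Fourier transform of the Fourier series of the periodized spectrum of $g$, and then transfer both modes of convergence from that series.

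First I would record the regularity of $g$ and set up the periodization. Since $\mathcal F g\in L^2(\R)$ is supported on the compact set $[-\tfrac{\Omega}{2},\tfrac{\Omega}{2}]$, Cauchy--Schwarz gives $\mathcal F g\in L^1(\R)$, so $g=\mathcal F^{-1}\mathcal F g$ is continuous and bounded and the samples $g(nT)$ are well defined. Now set $\Phi(\xi)=\sum_{k\in\Z}\mathcal F g(\xi-k/T)$. As $\supp\mathcal F g\subseteq[-\tfrac{\Omega}{2},\tfrac{\Omega}{2}]$ and $T\Omega\le 1$, the translates $\mathcal F g(\,\cdot\,-k/T)$ overlap on at most a null set, so $\Phi$ is a well-defined $\tfrac1T$-periodic function, square integrable over one period, which equals $\mathcal F g$ on $[-\tfrac1{2T},\tfrac1{2T}]$ and in particular vanishes on the transition band $\{\tfrac{\Omega}{2}\le|\xi|\le\tfrac1{2T}\}$. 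Expanding $\Phi$ into its mean-square convergent Fourier series over the period $\tfrac1T$ and computing the coefficients by Fourier inversion, $\int_{-1/(2T)}^{1/(2T)}\Phi(\xi)\,e^{2\pi i nT\xi}\,d\xi=\int_{\R}\mathcal F g(\xi)\,e^{2\pi i nT\xi}\,d\xi=g(nT)$, identifies the Fourier coefficients of $\Phi$ with the samples $g(nT)$ up to the fixed factor $T$; Parseval over a period then shows $(g(nT))_{n\in\Z}\in\ell^2(\Z)$.

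Multiplying this series identity by $\widehat\phi$ finishes the argument. On the passband $\{|\xi|\le\tfrac{\Omega}{2}\}$ one has $\Phi=\mathcal F g$ and $\widehat\phi$ constant; on the transition band $\Phi$ vanishes; and for $|\xi|\ge\tfrac1{2T}$ both $\widehat\phi$ and $\mathcal F g$ vanish (here $T\Omega\le 1$ enters, guaranteeing $[-\tfrac{\Omega}{2},\tfrac{\Omega}{2}]\subseteq[-\tfrac1{2T},\tfrac1{2T}]$). Hence $\widehat\phi\,\Phi=\mathcal F g$, the value of $\widehat\phi$ on the passband in \eqref{eq:lowpass} being normalized precisely so that the proportionality constant is one, and therefore $\mathcal F g=\sum_{n}g(nT)\,\widehat\phi\, e^{-2\pi i nT(\cdot)}$. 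This series converges in $L^2(\R)$ because $\widehat\phi$ is bounded and supported in the single period $[-\tfrac1{2T},\tfrac1{2T}]$, on which the functions $e^{-2\pi i nT\xi}$ are orthogonal and $(g(nT))_n\in\ell^2(\Z)$; applying the unitary map $\mathcal F^{-1}$ together with $\mathcal F(\phi(\cdot-nT))(\xi)=e^{-2\pi i nT\xi}\widehat\phi(\xi)$ yields \eqref{eqn:functionreconstruction-simple} with convergence in $L^2(\R)$. For the uniform statement, express the remainder $g(x)-\sum_{|n|\le N}g(nT)\phi(x-nT)$ as the integral over one period of $\widehat\phi(\xi)e^{2\pi i x\xi}$ against the Fourier-series tail of $\Phi$; by Cauchy--Schwarz its modulus is at most $\|\widehat\phi\|_{L^2}$ times the $L^2$-norm of that tail over a period, a bound independent of $x$ which tends to $0$.

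The only genuine obstacle is the critically sampled case $T\Omega=1$, where $\widehat\phi$ is forced to be the discontinuous kernel $T\,\mathds{1}_{[-1/(2T),1/(2T)]}$ and the spectral translates $\mathcal F g(\,\cdot\,-k/T)$ abut; one then has to check that $\Phi$ is still the genuine $L^2$ function $\mathcal F g$ on the fundamental domain (true, since the overlaps are null) and that boundary values play no role. Everything else is routine mean-square Fourier-series bookkeeping together with a single application of Cauchy--Schwarz for the uniform convergence.
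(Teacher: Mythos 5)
Your overall strategy --- periodize the spectrum, identify the Fourier coefficients of the periodization with the samples, multiply by $\widehat\phi$, and transfer $L^2$ and uniform convergence from the Fourier series --- is the standard proof and is structurally sound. (The paper gives no self-contained proof of this classical statement; it only notes that it follows formally from Theorem~\ref{thm:main-simple} by specializing to a multiplication operator, so a direct argument like yours is the appropriate one to supply.)

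There is, however, one genuine gap, and it sits exactly where you wave your hands: the claim that the value of $\widehat\phi$ on the passband is ``normalized precisely so that the proportionality constant is one.'' Track the constants. You correctly note that the Fourier coefficients of $\Phi$ are $T\,g(nT)$, i.e.\ $\Phi(\xi)=T\sum_n g(nT)e^{-2\pi i nT\xi}$; hence
\begin{equation*}
\mathcal F\Big(\sum_{n\in\Z} g(nT)\,\phi(\cdot-nT)\Big)(\xi)\;=\;\widehat\phi(\xi)\sum_{n\in\Z} g(nT)\,e^{-2\pi i nT\xi}\;=\;\tfrac1T\,\widehat\phi(\xi)\,\Phi(\xi),
\end{equation*}
which on the passband equals $\tfrac{1}{T\Omega}\,\mathcal F g(\xi)$ under \eqref{eq:lowpass}. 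So your computation actually proves $\sum_n g(nT)\phi(x-nT)=\tfrac{1}{T\Omega}\,g(x)$; the asserted identity $\widehat\phi\,\Phi=\mathcal F g$ requires $\widehat\phi\equiv 1$ on the passband, and your subsequent display $\mathcal F g=\sum_n g(nT)\widehat\phi\,e^{-2\pi i nT(\cdot)}$ additionally drops the factor $T$. These two slips cancel only when $T\Omega=1$. The root cause is not your argument but the normalization: for \eqref{eqn:functionreconstruction-simple} to hold verbatim in the oversampled case $T\Omega<1$, the low-pass kernel must satisfy $\widehat\phi=T$ (rather than $1/\Omega$) on $[-\Omega/2,\Omega/2]$ --- the two prescriptions agree exactly at critical sampling, which is why the sinc example is consistent. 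You should either carry the factor $T\Omega$ explicitly and flag the discrepancy with \eqref{eq:lowpass}, or restate the kernel condition; as written, the sentence asserting the constant is one is the step that fails. Everything else --- the null-overlap argument at $T\Omega=1$, the $\ell^2$ membership of the samples via Parseval, the $L^2$ convergence from orthogonality of the exponentials on one period, and the Cauchy--Schwarz bound giving uniform convergence --- is fine.
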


Recall that every operator $H$ on $L^2(\R)$ is in one-to-one correspondence with its kernel $\kappa_H$, that is, for a unique tempered distribution $\kappa_{H}$, we have $Hf(x)=\int \kappa_H(x,y)\,f(y)\,dy$ weakly.  In the following, $\chi_A$ denotes the characteristic function of a set $A$.

\begin{theorem}\label{thm:main-simple} \hspace{-.1cm}\cite{Pfa10}\hspace{.1cm}
  For $H: L^2(\R)\longrightarrow L^2(\R)$ with $\sigma_{H}\in L^2(\R^2)$, $\supp \mathcal F_s \sigma_H \subseteq [0, T] {\times}[-\frac \Omega 2, \frac \Omega 2]$, and  $T\Omega\leq 1$, we have
  \begin{align}
    \kappa_H(x+t,x)= \chi_{[0,T]}(t)\,\sum_{k\in\Z} \big(H\sum_{n\in\Z}\delta_{nT}\big)(t+kT)\, {\phi(x-kT)} \label{eqn:operatorreconstruction-simple},
  \end{align}
  with convergence in $L^2(\R^2)$ and uniform convergence in $x$ for each $t$. Again,  $\phi$ is any low-pass kernel satisfying \eqref{eq:lowpass}.
\end{theorem}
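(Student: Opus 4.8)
The plan is to transport the problem to the time-varying impulse response $h$ of $H$, where the band support $[0,T]\times[-\frac{\Omega}{2},\frac{\Omega}{2}]$ of $\eta_H=\mathcal F_s\sigma_H$ splits into a support restriction in the lag variable and a bandlimitation in the time variable; to recognize the response of $H$ to $\ident=\sum_{n\in\Z}\delta_{nT}$ as a quasi-periodization of $h$ in the lag variable; and then to apply the classical sampling theorem, Theorem~\ref{thm:classical}, in the time variable for each fixed lag.

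First I would set up the dictionary between kernel, impulse response, and spreading function. Matching $Hf(x)=\int\kappa_H(x,y)f(y)\,dy$ with the impulse-response representation $Hf(x)=\int h(x,t)f(x-t)\,dt$ gives $h(x,t)=\kappa_H(x,x-t)$, hence $\kappa_H(x+t,x)=h(x+t,t)$; matching with \eqref{eqn:spreadingrepresentation} gives $h(x,t)=\int\eta_H(t,\gamma)e^{2\pi i\gamma x}\,d\gamma$, i.e.\ $h(\cdot,t)=\mathcal F^{-1}[\eta_H(t,\cdot)]$. Since $\supp\eta_H\subseteq[0,T]\times[-\frac{\Omega}{2},\frac{\Omega}{2}]$, two facts follow. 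First, $h(x,t)=0$ for $t\notin[0,T]$, so $\kappa_H(x+t,x)=0$ for $t\notin[0,T]$, which accounts for the factor $\chi_{[0,T]}(t)$ in \eqref{eqn:operatorreconstruction-simple}. Second, for a.e.\ $t$ the function $g_t(x):=\kappa_H(x+t,x)=h(x+t,t)$ lies in $L^2(\R)$ with $\|g_t\|_{L^2(\R)}=\|\eta_H(t,\cdot)\|_{L^2(\R)}$ and $\supp\mathcal F g_t\subseteq[-\frac{\Omega}{2},\frac{\Omega}{2}]$, so that Theorem~\ref{thm:classical} applies to $g_t$ with the given $T$ and $\phi$.

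Next I would compute the response $H\ident$. In the impulse-response form it equals (formally for the moment) $(H\ident)(x)=\sum_{n\in\Z}h(x,x-nT)$, a quasi-periodization of $h$ in its lag argument. Evaluating at $x=t+kT$ with $t\in(0,T)$, the $n$-th summand is $h(t+kT,\,t+(k-n)T)$, which by the first fact above vanishes unless $t+(k-n)T\in[0,T]$, i.e.\ unless $n=k$. Thus for a.e.\ $t\in[0,T]$ and all $k\in\Z$,
\[
  (H\ident)(t+kT)=h(t+kT,t)=g_t(kT),
\]
so the samples of $H\ident$ along the coset $t+T\Z$ are exactly the samples of the bandlimited function $g_t$ on $T\Z$. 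Theorem~\ref{thm:classical} then yields, for a.e.\ $t$,
\[
  g_t(x)=\sum_{k\in\Z}g_t(kT)\,\phi(x-kT)=\sum_{k\in\Z}(H\ident)(t+kT)\,\phi(x-kT),
\]
with uniform convergence in $x$ and convergence in $L^2(\R)$; multiplying by $\chi_{[0,T]}(t)$ and using the first fact for $t\notin[0,T]$ gives \eqref{eqn:operatorreconstruction-simple} together with the claimed uniform-in-$x$ convergence for each $t$. For convergence in $L^2(\R^2)$, I would combine $\int_0^T\|g_t\|_{L^2(\R)}^2\,dt=\|\eta_H\|_{L^2(\R^2)}^2<\infty$ (finite because $\sigma_H\in L^2$) with a Bessel bound $\big\|\sum_{|k|\le N}c_k\,\phi(\cdot-kT)\big\|_{L^2(\R)}^2\le C\sum_k|c_k|^2$ for the shift system $\{\phi(\cdot-kT)\}_{k\in\Z}$: then the $L^2(\R)$-norm of every partial sum of the reconstruction series is dominated by the $t$-integrable function $C\sum_k|g_t(kT)|^2$ on $[0,T]$, and dominated convergence promotes the fiberwise $L^2(\R)$ convergence to convergence in $L^2([0,T]\times\R)=L^2(\R^2)$.

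The main obstacle is the very first manipulation rather than the limiting arguments: $\ident$ has infinite energy while $H$ is a priori only bounded on $L^2(\R)$ (indeed Hilbert--Schmidt, as $\sigma_H\in L^2$), so the meaning of $H\ident$, the identity $H\delta_{nT}=\kappa_H(\cdot,nT)$, the interchange of $H$ with the series $\sum_n\delta_{nT}$, and the joint measurability of $(x,t)\mapsto\kappa_H(x+t,x)$ needed for Fubini all require justification. I would settle this by density. One first proves \eqref{eqn:operatorreconstruction-simple} for operators whose spreading function is a $C_c^\infty$ function supported in the open rectangle: there $H\delta_{nT}=\kappa_H(\cdot,nT)$ is a genuine Schwartz function, the quasi-periodization $\sum_n h(x,x-nT)$ is locally finite, and every step above is elementary. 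This subclass is dense, in the Hilbert--Schmidt norm, in the class of all admissible $H$, via cutoff-and-mollification of the spreading function, since the Hilbert--Schmidt norm of $H$ equals $\|\eta_H\|_{L^2(\R^2)}=\|\sigma_H\|_{L^2(\R^2)}=\|\kappa_H\|_{L^2(\R^2)}$. Finally, both sides of \eqref{eqn:operatorreconstruction-simple} define linear maps of $H$ into $L^2(\R^2)$ that are bounded in the Hilbert--Schmidt norm on this class: the left-hand side because its $L^2(\R^2)$-norm equals $\|\kappa_H\|_{L^2(\R^2)}$, and the right-hand side because the a priori estimate $\|H\ident\|_{L^2(\R)}^2=\sum_{k\in\Z}\int_0^T|g_t(kT)|^2\,dt\le C\,\|\eta_H\|_{L^2(\R^2)}^2$, valid on the dense subclass, lets $H\mapsto H\ident$ extend to a bounded map into $L^2(\R)$, after which the Bessel bound for $\{\phi(\cdot-kT)\}$ controls the reconstruction series. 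Continuity of both maps then transfers \eqref{eqn:operatorreconstruction-simple} from the dense subclass to all admissible $H$.
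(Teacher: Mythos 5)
Your proof is correct. A remark on context first: the paper does not prove Theorem~\ref{thm:main-simple} itself (it is quoted from \cite{Pfa10}); the proof machinery it does display for the generalizations (Theorems~\ref{thm:reconstruction} and~\ref{prop:exp}) runs through the Zak transform $Z_{LT}$, the Poisson summation formula, and, for non-rectangular $M$, the inversion of the $L\times L$ Gabor matrix $\mathbf{A}$. Your route is genuinely different in packaging and more elementary: you observe that for the rectangle $[0,T]\times[-\tfrac{\Omega}{2},\tfrac{\Omega}{2}]$ the support condition on $\eta_H$ decouples into a hard truncation in the lag variable and a bandlimitation in the time variable, so that the response to the delta train, read along the coset $t+T\Z$, returns exactly the samples $g_t(kT)$ of the bandlimited fiber $g_t(x)=\kappa_H(x+t,x)$ with no aliasing (only the $n=k$ term of the quasi-periodization $\sum_n h(x,x-nT)$ survives, since $t+(k-n)T\in[0,T]$ forces $n=k$ for $t\in(0,T)$); Theorem~\ref{thm:classical} then finishes the job fiberwise in $t$. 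This is in effect the $L=1$ specialization in which the matrix $\mathbf{A}$ is a scalar; the Zak-transform formulation is what one needs when several shifted copies of the spreading function alias onto one another and must be unmixed. Your treatment of the distributional issues --- proving the identity for $\eta_H\in C_c^\infty$ of the open rectangle and extending by Hilbert--Schmidt continuity, with the Bessel bound for $\{\phi(\cdot-kT)\}_k$ and the sampling inequality $\sum_k|g_t(kT)|^2\lesssim\|g_t\|_{L^2(\R)}^2$ making both sides of \eqref{eqn:operatorreconstruction-simple} continuous in $H$ --- is sound, and restricting to a.e.\ $t$ at the endpoints $t\in\{0,T\}$ is the honest reading of the statement. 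Both approaches rest on the same underlying mechanism (periodization in the lag variable plus absence of aliasing plus low-pass reconstruction); yours buys transparency for the rectangular case at the cost of not generalizing to arbitrary $M$ of measure less than one.
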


We point to an important difference between the applicability of Theorems~\ref{thm:classical}  and ~\ref{thm:main-simple}: in Theorem~\ref{thm:classical}, a bandlimitation   to a large set $[-\frac \Omega 2, \frac \Omega 2]$ can be resolved by choosing  a small $T$; on the other side, Theorem~\ref{thm:main-simple} is not applicable if the bandlimiting set $[0, T] {\times}[-\frac \Omega 2, \frac \Omega 2]$ has area greater than one. Indeed, the  the following is known.

\begin{theorem}  \hspace{-.1cm} \cite{Pfander06-2,Pfa08}  \hspace{.1cm} $OPW(M)$ is identifiable in the sense of \eqref{eqn:identifable} with appropriate $w=\sum_{n\in\Z}c_n \delta_{nT}$ if $M$ is compact with measure less than 1. If $M$ is open and has area greater than 1, then there exists no tempered distribution $w$ identifying $OPW(M)$.
\end{theorem}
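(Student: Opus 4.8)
Both halves of the theorem can be reached through a single device: a finite Zak/Gabor analysis that turns the map $H\mapsto Hw$, for a weighted delta train $w=\sum_{n\in\Z}c_n\delta_{nT}$ with $P$-periodic coefficients $(c_n)$, into multiplication by a matrix built from $(c_n)$. Concretely, starting from the spreading representation \eqref{eqn:spreadingrepresentation} one gets $Hw(x)=\sum_n c_n\int\eta_H(t,\gamma)\,e^{2\pi i\gamma x}\,\delta_{nT}(x-t)\,dt\,d\gamma$, which on each interval $[nT,(n+1)T)$ reduces to $c_n$ times a slice of the time-varying impulse response of $H$; cutting $x$ into such blocks and applying the length-$P$ discrete Fourier transform in the block index produces an identity that expresses a Zak-type transform of $Hw$ as $G(c)$ applied to a vector whose entries are (sampled) Zak coefficients of $\eta_H$ over a fundamental cell of the governing time-frequency lattice. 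Here $G(c)$ is the Gabor system matrix attached to the window $(c_n)\in\C^P$, and the only entries of the $\eta$-vector that can be nonzero are those indexed by lattice cells that meet the band support $M$.

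For the positive direction I would first normalize: a time-frequency dilation $f(x)\mapsto\lambda^{1/2}f(\lambda x)$ is $L^2$-unitary, preserves $\|H\|_{\mathcal L(L^2(\R))}$, and carries $OPW(M)$ onto $OPW(M_\lambda)$; then, since $M$ is compact with $|M|<1$, choose $T$ and a sufficiently fine lattice (period $P$) so that $M$ lies in a union of at most $P$ lattice cells. This covering step is the measure-theoretic crux and is \emph{not} a matter of outer Jordan content: the honest argument that $|M|<1$ suffices for compact $M$ is exactly what the cited works provide. Given such a covering, identifiability \eqref{eqn:identifable} for the input $w$ is equivalent to the column submatrix of $G(c)$ selected by the occupied cells being bounded below (the continuous lattice parameter only enters through a unitary diagonal and does not affect the bound). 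As there are at most $P$ occupied cells out of $P$, it suffices to produce one window $(c_n)\in\C^P$ for which every square submatrix of the associated $P\times P$ Gabor matrix is invertible; such a window then works for \emph{every} admissible $M$ at once. I would invoke the known existence of such full spark Gabor windows (a Vandermonde-type genericity argument, or explicit constructions). The resulting reconstruction has the shape of Theorem~\ref{thm:main-simple}.

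For the negative direction I would argue by contradiction: suppose some $w\in\mathcal S'(\R)$ identifies $OPW(M)$. Because $M$ is \emph{open} with $|M|>1$, refining the lattice places strictly more than $P$ cells inside $M$ (the union of cells contained in the open set $M$ increases to $M$, so its measure eventually exceeds $1$); the subspace of $OPW(M)$ with spreading functions supported on those cells is still identified by $w$ via \eqref{eqn:identifable}. Running the same finite Zak/Gabor computation now yields a map represented by a matrix with $P$ rows and more than $P$ columns acting on the Zak coefficients of $\eta_H$. An underdetermined linear map has nontrivial kernel, producing a nonzero $H\in OPW(M)$ with $Hw=0$ (or with $\|Hw\|_{L^2(\R)}$ arbitrarily small relative to $\|H\|_{\mathcal L(L^2(\R))}$), contradicting the lower bound in \eqref{eqn:identifable}.

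The step I expect to be genuinely delicate is carrying out the Zak/Gabor reduction in the negative direction for an \emph{arbitrary} tempered distribution $w$ rather than a weighted delta train: a generic $w$ carries no periodicity, so one must first localize it to a long interval and periodize, then show the resulting finite linear systems stay underdetermined and that the truncation and periodization errors are too small to rescue the lower bound. In the positive direction the corresponding soft spot is purely the passage from ``$M$ compact, $|M|<1$'' to the clean union-of-cells normal form with at most $P$ occupied cells (the measure-versus-content subtlety), after which the existence of full spark Gabor frames does the rest.
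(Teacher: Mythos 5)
The paper does not prove this statement; it is quoted from \cite{Pfander06-2,Pfa08} (and \cite{Pfander06-3}), so there is no internal proof to compare against. Your positive direction is essentially the approach of those references and of the machinery this paper itself redevelops in the proof of Theorem~\ref{prop:exp}: cover the compact $M$ by at most $L$ cells of a lattice $T\Z\times\Omega\Z$ with $T\Omega=1/L$, $L$ prime, reduce via the Zak transform to an $L\times L$ submatrix of the finite Gabor system matrix $\mathbf G$, and invoke the full-spark result of \cite{KPR08}. One remark: your worry about a ``measure-versus-content subtlety'' is a non-issue, because for a compact set the outer Jordan content equals the Lebesgue measure, so $|M|<1$ immediately yields a finite cover by lattice cells of total area less than one after a sufficiently fine refinement; this is exactly the Jordan-domain argument the paper sketches at the start of the proof of Theorem~\ref{prop:exp}.

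The genuine gap is in the negative direction. The assertion ``an underdetermined linear map has nontrivial kernel'' does not apply: after the Zak/Gabor reduction the map from spreading coefficients to coefficients of $Hw$ is an operator between \emph{infinite-dimensional} sequence spaces, and the per-block ``$P$ rows versus more than $P$ columns'' picture is misleading because the blocks are coupled --- the resulting matrix is a slanted band matrix, not block-diagonal, so one cannot extract an exact null vector blockwise. What must be shown is that the lower bound in \eqref{eqn:identifable} fails, i.e.\ that there is a sequence $H_n\in OPW(M)$ with $\|H_n\|_{\mathcal L(L^2(\R))}=1$ and $\|H_nw\|_{L^2(\R)}\to 0$, and for an \emph{arbitrary} tempered distribution $w$ this requires the key functional-analytic lemma of Kozek--Pfander: a bounded operator $\ell^2(\Z\times\{1,\dots,N\})\to\ell^2(\Z)$ whose matrix is supported on finitely many slanted diagonals cannot be bounded below once $N$ exceeds the number of available ``rows per shift.'' Your proposed fix (localize $w$ to a long interval, periodize, and control truncation errors) is not how the argument goes and it is not clear it can be made to work, since a general $w$ need not be close to any periodic distribution in a sense compatible with the stability inequality. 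You correctly flag this step as delicate, but the missing ingredient is a specific instability lemma for slanted band matrices, not a truncation-and-periodization estimate.
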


Hence, it is necessary to  restrict ourselves to operator Paley-Wiener spaces defined by compact sets $M$ with Lebesgue measure one. For such spaces, one can extend Theorem~\ref{thm:main-simple} to  the following.

\begin{theorem}\label{thm:reconstruction} \hspace{-.1cm}\cite{PW12} \hspace{.1cm}
Let $M$ be compact with Lebesgue measure less than one. Then there exists $T,\Omega>0$ with $ {T\Omega}=\frac 1 L$, $L$ prime, $\delta>0$, and  $L$-periodic sequences $\{c_n\}_n$, $\{b_{jq}\}_q$, $j=0,1,\ldots,n-1$, so that 
\begin{align} \label{eqn:reconstructionformula}
\kappa_H(x+t,x) &= LT  \sum_{j=0}^{L-1} r(t-k_jT) e^{2\pi i n_j \Omega x} \sum_{q\in\Z} b_{jq} \\&\notag \qquad \qquad \big( H \sum_n c_n\delta_{nT}\big) (t- (k_j-q)T) \, \phi(x+(k_j-q)T) \,, \quad  H\in OPW(M),
\end{align}
where  $r,\phi $ are Schwartz class functions that satisfy
\begin{align*}
r(t)\widehat{\phi}(\gamma)&= 0 \textnormal{  if  } \ (t,\gamma) \notin (-\delta, T + \delta)\times (-\delta,\Omega+\delta),
\end{align*}
and
\begin{equation}\label{eq:r_phi_2}
\sum_{k\in\Z} r(t - kT) \equiv 1 \equiv \sum_{n\in\Z} \widehat{\phi}(\gamma - n\Omega)\,.
\end{equation}
Moreover, \eqref{eqn:reconstructionformula} converges in $L^2(\R)$ and uniformly in $x$ for each $t$.
\end{theorem}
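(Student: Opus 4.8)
The plan is to lift the general case to the rectangular situation of Theorem~\ref{thm:main-simple} by a rectification step and then to disentangle the resulting superposition of channel responses via a finite linear system. First, using that $M$ is compact with $|M|<1$, I would choose a prime $L$ and $T,\Omega>0$ with $T\Omega=1/L$ (and, if needed, translate $M$, which amounts to composing $H$ with a time shift and a modulation) so that $M$ meets at most $L$ of the cells $R_j=[k_jT,(k_j{+}1)T)\times[n_j\Omega,(n_j{+}1)\Omega)$, $j=0,\dots,L-1$, of the $T\times\Omega$-grid, and so that the $L$ time--frequency shifts on $\Z_L$ indexed by $(k_j,n_j)\bmod L$ generate a full-spark Gabor system in $\C^L$; the feasibility of such a configuration is the mechanism already underlying the identifiability statement cited above, and primality of $L$ enters precisely here, through Chebotarev's theorem on the non-vanishing of the minors of the $L$-point DFT matrix. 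Correspondingly I would split $\eta_H=\sum_{j=0}^{L-1}\eta_j$ with $\supp\eta_j\subseteq R_j$; each $\eta_j$ is a time--frequency translate of a spreading function supported in $[0,T)\times[0,\Omega)$, hence individually reconstructible by Theorem~\ref{thm:main-simple}.

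Next I would connect the operator output to these pieces. From $Hf(x)=\int\kappa_H(x,y)f(y)\,dy$ one gets $H\delta_{nT}(x)=\kappa_H(x,nT)$, so for $g=\sum_n c_n\delta_{nT}$ with $L$-periodic $c$ one has $(Hg)(x)=\sum_n c_n\,\kappa_H(x,nT)$. Writing $P_j(x,t):=\int\boldsymbol\eta_j(t,\gamma)\,e^{2\pi i\gamma x}\,d\gamma$ --- which is supported in $t$ near $[k_jT,(k_j{+}1)T]$ and, after demodulation by $e^{-2\pi i n_j\Omega x}$, bandlimited in $x$ to a set slightly larger than $[0,\Omega]$ --- the output $Hg$, restricted to the sample lines near any interval $[mT,(m{+}1)T]$, is a linear combination of the slices $P_j\big((m{-}k_j)T,\cdot\big)$ with coefficients $c_{m-k_j}$. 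Since $L$ such slices are superimposed, $Hg$ does not separate them directly, but grouping these relations by the residue $m\bmod L$ yields an $L\times L$ linear system whose matrix is of Gabor type built from $c$ and the labels $(k_j,n_j)$; by the full-spark property of the first step this system is invertible, and its inverse provides the $L$-periodic sequences $\{b_{jq}\}_q$ for which $\sum_q b_{jq}\,(Hg)\big(\cdot-(k_j{-}q)T\big)$ recovers exactly the lattice samples $\{P_j(mT,\cdot)\}_{m\in\Z}$ of the $j$-th slice.

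Finally I would pass from the lattice samples to the continuum. For fixed $t$, the demodulated slice $x\mapsto e^{-2\pi i n_j\Omega x}P_j(x,t)$ is bandlimited to a set slightly larger than $[0,\Omega]$ and sampled at rate $1/T$ with $T\Omega=1/L<1$, hence is reconstructed by the oversampled kernel $\phi$; in $t$ the support lies in a slightly enlarged copy of $[k_jT,(k_j{+}1)T]$, handled by the smooth window $r(\cdot-k_jT)$, and the partition-of-unity identities \eqref{eq:r_phi_2} guarantee that summing the interpolated, remodulated and retranslated slices over $j$ returns $\sum_jP_j(x,t)=\kappa_H(x+t,x)$; tracking the normalization constants yields the prefactor $LT$ and formula \eqref{eqn:reconstructionformula}. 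Convergence in $L^2(\R)$ and uniformly in $x$ for each $t$ then follows from the rapid decay of $r$ and $\phi$ together with the norm equivalence \eqref{eqn:equivalentnorms}, which bounds $\kappa_H$ and the samples entering the formula in terms of $\|H\|_{\mathcal L(L^2(\R))}$. The hardest part will be the rectification together with the invertibility of the finite system: one must arrange $T,\Omega,L$ and the labelling of the occupied cells so that the associated $L\times L$ Gabor matrix is nonsingular, and it is exactly there that the primality of $L$ and the full-spark property of the DFT are indispensable; the interpolation and convergence steps, by contrast, are routine adaptations of the classical sampling argument.
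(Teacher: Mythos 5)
Your proposal follows essentially the same route as the proof in \cite{PW12} that the paper reproduces for the closely analogous Theorem~\ref{prop:exp} in Section~\ref{sec:Op_id}: a Jordan-type covering of $M$ by $L$ shifted $T\times\Omega$ cells with $T\Omega=1/L$, the observation that the response to the $L$-periodically weighted delta train superimposes $L$ translated and modulated copies of the spreading function (organized there via the Zak transform and Poisson summation, in your write-up via grouping the sample lines by residue modulo $L$), inversion of the resulting $L\times L$ finite Gabor matrix with columns $T^{k_j}M^{n_j}c$, and per-cell reconstruction with the smooth kernels $r,\phi$ together with the partition-of-unity identities \eqref{eq:r_phi_2}. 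One small correction: the invertibility of every $L\times L$ submatrix of the $L\times L^2$ discrete Gabor matrix for prime $L$ and generic $c$ is the theorem of \cite{KPR08} that the paper invokes, not a direct consequence of Chebotarev's theorem on the minors of the DFT matrix, although the role of primality is indeed analogous.
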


 \section{Main Results}\label{sec:results}

\subsection{Local representations of operators} In classical as well as in operator sampling, working with Schwartz class kernels $r$,$\phi$ 
is of advantage. Indeed, in the classical sampling theorem, the slow decay of the sinc kernel in \eqref{eqn:functionreconstruction-simple} implies that a small perturbation of just a few coefficients $g(nT)$ can lead to significant deviations of all values $g(t)$ outside of the sampling grid $T\Z$; this includes values achieved at locations far from the sampling points $nT$.  Hence to approximately recover the function values locally, that is, on an compact interval, it does not suffice to know the function samples in a constant size neighborhood of that interval. 
When working with Schwartz class kernels, in contrast, such a local approximate reconstruction is possible; one can achieve
\begin{align}
    \big| g(x) -   \sum_{nT\in  [a,b]} g(n T)\, \phi( x-nT) \big|<\epsilon \label{eqn:approximatefunctionreconstruction},
  \end{align}
for all $x\in [a+d(\epsilon), b-d(\epsilon)]$ where the neighborhood size $d(\epsilon)$ depends on the approximation level $\epsilon$ but  not  on the interval $[a,b]$.

A corresponding possibility of using local information for local reconstruction is not given  in Theorem~\ref{thm:main-simple}.  Moreover, the identifier $w=  \sum_{n\in\Z}\delta_{nT}$ neither decays in time or in frequency, clearly showing that in practice this input signal is not usable. However, in the framework of Theorem~\ref{thm:main-simple}, this is unavoidable, as we show in the following theorem.

\begin{theorem}\label{thm:nodecay}
 If the tempered distribution $w$ identifies $OPW([0,T]{\times} [-\Omega/2,\Omega/2])$, $T\Omega>0$, then $w$ decays weakly neither  in time nor in frequency, that is, we have neither
$$
 \langle w , \varphi(\cdot-x)\rangle \stackrel{x\to\pm\infty}  \longrightarrow 0 \quad \text{ nor }  \quad \langle \widehat w , \varphi(\cdot-\xi)\rangle \stackrel{\xi\to\pm\infty}  \longrightarrow  0$$ for all Schwartz class functions $\varphi$.
\end{theorem}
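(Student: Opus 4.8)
The plan is to play the identifiability inequality \eqref{eqn:identifable} against a one-parameter family of operators produced from a single well-chosen operator by conjugation with time-frequency shifts. Write $T_af=f(\,\cdot-a)$ and $M_bf=e^{2\pi i b\,\cdot}f$ for the (unitary) translation and modulation operators. A direct computation with the spreading representation \eqref{eqn:spreadingrepresentation} shows that if $H$ has spreading function $\eta_H$, then $T_aHT_{-a}$ and $M_bHM_{-b}$ have spreading functions $\eta_H(t,\gamma)\,e^{-2\pi i\gamma a}$ and $\eta_H(t,\gamma)\,e^{2\pi i bt}$; in particular $OPW(M)$ is invariant under conjugation by $T_a$ and by $M_b$, and conjugation by a unitary preserves the operator norm. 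Consequently, for any fixed nonzero $H_0\in OPW(M)$, the inequality \eqref{eqn:identifable} forces $\|(T_aH_0T_{-a})w\|_{L^2(\R)}\ge A\,\|H_0\|_{\mathcal L(L^2(\R))}=:c_0>0$ for every $a\in\R$. The idea is to choose $H_0$ so that, if $w$ decayed, the left-hand side would instead tend to $0$ as $|a|\to\infty$.

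For the time direction I would take nonzero $\psi\in C_c^\infty\big((0,T)\big)$ and $\rho\in C_c^\infty\big((-\tfrac\Omega2,\tfrac\Omega2)\big)$ and let $H_0$ be the operator with spreading function $\psi\otimes\rho$; equivalently $H_0f=\check\rho\cdot(\psi* f)$, where $\check\rho:=\mathcal F^{-1}\rho\in\mathcal S(\R)$. Then $H_0\in OPW\big([0,T]\times[-\tfrac\Omega2,\tfrac\Omega2]\big)$ and $\|H_0\|_{\mathcal L(L^2(\R))}\in(0,\infty)$. Since translation commutes with convolution, $(T_aH_0T_{-a})w=\check\rho(\,\cdot-a)\cdot(\psi*w)$, so the lower bound above reads
\begin{equation*}
c_0^2\ \le\ \int_\R|\check\rho(x-a)|^2\,|(\psi*w)(x)|^2\,dx\qquad\text{for all }a\in\R .
\end{equation*}
Assume, towards a contradiction, that $w$ decays weakly in time. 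Applying the hypothesis to the Schwartz function $\overline{\psi(-\,\cdot)}$ gives $(\psi*w)(x)\to0$ as $x\to\pm\infty$, while $\psi*w$ is continuous and $\check\rho\in L^2(\R)$. Fixing $\epsilon>0$, choosing $R$ with $|(\psi*w)(x)|<\epsilon$ for $|x|>R$, and splitting the integral at $|x|=R$, the part over $|x|>R$ is at most $\epsilon^2\|\check\rho\|_{L^2(\R)}^2$ and the part over $|x|\le R$ is at most $\big(\sup_{|x|\le R}|(\psi*w)(x)|^2\big)\int_{-R-a}^{\,R-a}|\check\rho|^2$, which tends to $0$ as $|a|\to\infty$ because $|\check\rho|^2\in L^1(\R)$. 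Hence the right-hand side tends to $0$, contradicting $c_0>0$; so $w$ cannot decay weakly in time.

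For the frequency direction I would transfer this to $\widehat w$ via the (unitary) Fourier transform. Conjugation by $\mathcal F$ sends $OPW$ of an axis-parallel rectangle onto $OPW$ of a $90^\circ$-rotated rectangle, preserves the operator norm, and intertwines the action on $w$ with the action on $\widehat w$: concretely, $\mathcal F^{-1}H\mathcal F\in OPW\big([0,T]\times[-\tfrac\Omega2,\tfrac\Omega2]\big)$ with $\|\mathcal F^{-1}H\mathcal F\|_{\mathcal L(L^2(\R))}=\|H\|_{\mathcal L(L^2(\R))}$ and $(\mathcal F^{-1}H\mathcal F)w=\mathcal F^{-1}(H\widehat w)$ for every $H\in OPW(\widetilde M)$, where $\widetilde M$ is a suitable nondegenerate axis-parallel rectangle. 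Feeding this into \eqref{eqn:identifable} shows that $\widehat w$ identifies $OPW(\widetilde M)$. Since the argument of the preceding paragraph used nothing about the band rectangle beyond its nonempty interior, it applies verbatim to $\widehat w$ and $\widetilde M$ and shows that $\widehat w$ does not decay weakly in time — which is precisely the assertion that $w$ does not decay weakly in frequency.

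The step I expect to be the main obstacle is the apparent asymmetry between the two directions: an operator in $OPW(M)$ can localize its action in \emph{time} (a convolution with $\psi$ followed by multiplication by the Schwartz function $\check\rho$), but "localizing in frequency" would require convolution in time with $\check\rho$ itself, which is not compactly supported, so no operator in $OPW(M)$ does this. This is why the frequency statement is routed through Fourier conjugation rather than obtained by mimicking the time argument. The remaining ingredients — the bookkeeping of spreading functions under conjugation, the identity $(T_aH_0T_{-a})w=\check\rho(\,\cdot-a)(\psi*w)$, and the split-integral estimate — are routine.
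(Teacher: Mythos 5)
Your proof is correct, and its core mechanism is the same as the paper's: test the lower identification bound in \eqref{eqn:identifable} against a one-parameter family obtained by conjugating a fixed operator of the form $f\mapsto\check\rho\cdot(\psi*f)$ with time-frequency shifts, and observe that weak decay of $w$ would force $\|(T_aH_0T_{-a})w\|_{L^2}\to0$. Indeed, the paper's family $H_n$ with kernel $\kappa_n(x,y)=\phi(x-n)r(x-y)$ is exactly your $T_nH_0T_{-n}$ at integer shifts, and the paper's contradiction is your split-integral estimate in compressed form (the paper obtains the uniform lower bound on $\|H_n\|$ from the norm equivalence \eqref{eqn:equivalentnorms}, whereas you get it for free from unitarity of $T_a$ --- a small simplification). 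Where you genuinely diverge is the frequency half: the paper builds a second explicit family with spreading functions $\eta_n(t,\nu)=r(t)e^{2\pi int}\widehat\phi(\nu)e^{-2\pi it\nu}$ (modulation conjugates of a fixed operator) and computes $\|H_nw\|_{L^2}^2$ directly on the Fourier side through a chain of distributional pairings, while you instead prove once that conjugation by $\mathcal F$ carries $OPW$ of a rectangle to $OPW$ of a rotated rectangle and intertwines identification by $w$ with identification by $\widehat w$, then quote the time-direction result verbatim. Your reduction is cleaner and avoids the paper's page of pairing manipulations, at the cost of having to verify the (routine, but not stated in the paper) covariance of the spreading function under $\mathcal F$-conjugation, including the harmless phase factor $e^{\pm2\pi it\gamma}$ that does not affect the support. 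Both arguments are sound; your observation that the time argument needs nothing about the rectangle beyond a nonempty interior is exactly what makes the reduction legitimate.
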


We address this problem  by developing a concept of ``local recovery'' of an operator, in analogy to the local recovery of a function in  \eqref{eqn:approximatefunctionreconstruction}.
Indeed, the key to most results presented in this paper is to aim only for the recovery of the operator restricted to a set of functions ``localized'' on a prescribed set $S$ in the time-frequency plane. This  is indeed reasonable in communications where  band and time constraints on transmitted signals are frequently present. In \cite{HB12}, for example, operators  that map  bandlimited input signals to finite duration output signals are considered. Bivariate Fourier series expansions of such an operator's compactly supported Kohn-Nirenberg symbol allow  the authors to discretize the a-priori continuous input-output relations \eqref{eqn:knrepresentation} and \eqref{eqn:spreadingrepresentation}.

Our definition of function localization in time and frequency is based on Gabor frames. It involves translation and modulation operators, $$\mathcal T_t f: f\mapsto f(\cdot -t)\text{ and } \mathcal M_\nu :f\mapsto e^{2\pi i \nu(\cdot)}f.$$ These operators are unitary on $L^2(\R)$ and isomorphisms on all function and distribution spaces considered in this paper.

For any $g\in L^2(\R)$ and $a,b>0$,  we say that the Gabor system
$$(g,a\Z \times b\Z)=\{\mathcal T_{ka} \mathcal M_{\ell b} g \}_{k,\ell\in\Z}$$ is a tight frame for $L^2(\R^d)$ if for some $A>0$, the so-called frame bound, we have
$$
    f=A\,\sum_{k,\ell\in\Z} \langle f, \mathcal T_{ka}\,\mathcal M_{\ell b} g\rangle\ \mathcal T_{ka}\,\mathcal M_{\ell b} g$$
    for all $f\in L^2(\R^d)$.
Each coefficient in this expansion can be interpreted to reflect the local behavior of the function near the indexing point in time-frequency space. Hence, a natural way to define time-frequency localized functions is that all but certain expansion coefficients are small. 

\begin{definition}\label{def:localization}
Let $(g,a\Z\times b\Z)$, $g\in \mathcal S(\R)$, be a tight frame for $L^2(\R)$ with frame bound 1. We say that $f\in L^2(\R)$  is $\epsilon$--time-frequency localized on the set $S$ if
  \begin{align*}
    \sum_{ (ka,\ell b) \in  \normalsize S } |\langle f,\mathcal M_{\ell b}\mathcal T_{ka} \,g\rangle|^2\geq (1-\epsilon^2)\,\sum_{(ka,\ell b) \in  \normalsize  \R^2} |\langle f, \mathcal M_{\ell b}\mathcal T_{ka}\,g\rangle|^2 \,.
    \end{align*}
\end{definition}

Our next result states that a sufficient condition for two operators to approximately agree on  functions $\epsilon$--time-frequency localized on a set $S$ is that their Kohn-Nirenberg symbols almost agree on a neighborhood of $S$.
Below, $B(r)$ denotes the Euclidean unit ball with radius $r$ and center $0$; the dimension will always be clear from the context. For brevity of notation, we set  $S-B(r)=\big(S^c+B(r)\big)^c$ for $S\subseteq \R^2$.

\begin{theorem}\label{thm:localization1}
Fix $M$ compact  and let $(g,a\Z\times b\Z)$, $g\in \mathcal S(\R)$, be a tight frame for $L^2(\R)$ with frame bound 1.  Then any pair of operators $H,\widetilde H \in OPW(M)$ for which  one has
  $$
   \|\sigma_H\|_{L^\infty(\R^2)},\  \|\sigma_{\widetilde H}\|_{L^\infty(\R^2)}\leq \mu\quad\text{and}\quad \|\sigma_H-\sigma_{\widetilde H}\|_{L^\infty(S)}\leq \epsilon\,\mu
  $$
  on a set $S\subseteq \R^2$ satisfy
  \begin{align*}
    \|Hf-\widetilde H f\|_{L^2(\R)}\leq C\,\epsilon\,\mu\,  \|f\|_{L^2(\R)}
  \end{align*}
  for all $f\in L^2(\R)$  that are $\epsilon$--time-frequency localized on  $S-B\big(d(\epsilon)\big)
  $ in the sense of Definition~\ref{def:localization}. Here $C>0$ is an absolute constant and $d:(0,1)\longrightarrow \R^+$ satisfies $d(\epsilon)=o(\sqrt[k]{1/\epsilon})$ for all $k\in\N$ as $\epsilon\to 0$.
\end{theorem}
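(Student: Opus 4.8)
The plan is to set $D:=H-\widetilde H$; since $OPW(M)$ is a linear space, $D\in OPW(M)$ with $\sigma_D=\sigma_H-\sigma_{\widetilde H}$, so the hypotheses read $\|\sigma_D\|_{L^\infty(\R^2)}\le 2\mu$ and $\|\sigma_D\|_{L^\infty(S)}\le\epsilon\mu$, and the goal becomes $\|Df\|_{L^2(\R)}\le C\epsilon\mu\|f\|_{L^2(\R)}$ for $f$ that is $\epsilon$--time-frequency localized on $U:=S-B(d(\epsilon))$. Writing $\Lambda:=a\Z\times b\Z$ and $\pi_{z}:=\mathcal M_{\ell b}\mathcal T_{ka}$ for $z=(ka,\ell b)$, I would expand $f$ in the Parseval frame $(g,\Lambda)$, $f=\sum_{z\in\Lambda}\langle f,\pi_z g\rangle\pi_z g$ (so $\sum_z|\langle f,\pi_z g\rangle|^2=\|f\|_2^2$ and synthesis is an $\ell^2\to L^2$ contraction), and split $f=f_1+f_2$ with $f_1$ the partial sum over $z\in\Lambda\cap U$. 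Definition~\ref{def:localization} gives $\|f_2\|_2\le\epsilon\|f\|_2$, hence by \eqref{eqn:equivalentnorms} the easy bound $\|Df_2\|_2\le\|D\|_{\mathcal L(L^2(\R))}\,\epsilon\|f\|_2\le 2B\mu\,\epsilon\|f\|_2$. Everything then reduces to estimating $\|Df_1\|_2$.

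For $f_1$ I would pass to the partial Gabor matrix of $D$. Since $Df_1=\sum_{z''\in\Lambda\cap U}\langle f,\pi_{z''}g\rangle\,D\pi_{z''}g$, the Parseval identity applied to $Df_1$ gives $\|Df_1\|_2=\|A\,c\|_{\ell^2(\Lambda)}$ with $A=[\langle D\pi_{z''}g,\pi_z g\rangle]_{z\in\Lambda,\,z''\in\Lambda\cap U}$ and $\|c\|_{\ell^2}\le\|f\|_2$, so it suffices to prove a Schur bound for $A$. The key step is the pointwise entry estimate: there is a weight $w\in\ell^1(\Lambda)$, depending only on $M$ and on the frame $(g,\Lambda)$, such that for all $z''\in\Lambda\cap U$ and all $z\in\Lambda$,
\begin{equation*}
\bigl|\langle D\pi_{z''}g,\pi_z g\rangle\bigr|\ \le\ C\bigl(\epsilon+\tau(d(\epsilon))\bigr)\,\mu\,w(z-z''),
\end{equation*}
where $\tau$ decays faster than every polynomial. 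Granting this, both the row sums and the column sums of $A$ are at most $C(\epsilon+\tau(d(\epsilon)))\mu\|w\|_{\ell^1}$, so $\|A\|_{\ell^2\to\ell^2}\le C'(\epsilon+\tau(d(\epsilon)))\mu$ and therefore $\|Df_1\|_2\le C'(\epsilon+\tau(d(\epsilon)))\mu\|f\|_2$.

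The entry estimate is the heart of the matter and the step I expect to be the main obstacle. Two regimes must be combined. First, using that $\mathcal F_s\sigma_D$ is supported in the compact set $M$ and $g\in\mathcal S(\R)$, one shows that the operators in $OPW(M)$ have ``almost diagonal'' Gabor matrices: $\langle D\pi_{z''}g,\pi_z g\rangle$ is negligible, faster than any polynomial in $\operatorname{dist}(z-z'',M)$ with prefactor $\|\sigma_D\|_\infty\le 2\mu$, once $z-z''$ is far from $M$; this supplies the summable factor $w(z-z'')$. Second, for $z-z''$ within a bounded neighborhood of $M$ one must upgrade the prefactor from $\|\sigma_D\|_\infty$ to the \emph{local} supremum of $|\sigma_D|$ near $z''$. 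Here one exploits that $z''\in U=S-B(d(\epsilon))$ forces $B(z'',d(\epsilon))\subseteq S$ as soon as $d(\epsilon)$ exceeds the radius of $M$ plus a fixed constant (using the Minkowski identities $(S-B(r_1))-B(r_2)=S-B(r_1+r_2)$ and $(S-B(r))+B(r)\subseteq S$), so that $|\sigma_D|\le\epsilon\mu$ throughout $B(z'',d(\epsilon))$. Concretely, I would write $\langle D\pi_{z''}g,\pi_z g\rangle$ via the Kohn--Nirenberg integral \eqref{eqn:knrepresentation} (or equivalently via the time-varying impulse response), split the defining integral into the contribution of the phase-space region $B(z'',d(\epsilon))$ --- on which $|\sigma_D|\le\epsilon\mu$, producing the $\epsilon\mu$ term --- and a complementary tail, in which repeated integration by parts together with the rapid decay of $g$ and $\widehat g$, and Bernstein-type estimates for the bandlimited symbol $\sigma_D$, yield a factor $\tau(d(\epsilon))\le C_N d(\epsilon)^{-N}$ for every $N$, the constant depending only on $g$, $M$ and $N$. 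Throughout, one must verify that no constant depends on $S$, $\epsilon$ or $\mu$.

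Combining the two contributions gives $\|Hf-\widetilde H f\|_2=\|Df\|_2\le\|Df_1\|_2+\|Df_2\|_2\le C\bigl(\epsilon+\tau(d(\epsilon))\bigr)\mu\|f\|_2$. It remains to specify $d$: since $\tau$ decays faster than every polynomial, for each $N$ the choice $d(\epsilon)\asymp\epsilon^{-1/N}$ forces $\tau(d(\epsilon))\le\epsilon$; letting $N$ grow slowly as $\epsilon\to 0$ (e.g.\ dyadically in $1/\epsilon$) yields an admissible $d$ with $d(\epsilon)=o(\sqrt[k]{1/\epsilon})$ for every $k\in\N$ and completes the proof.
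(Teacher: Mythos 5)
Your overall architecture is viable, and it is genuinely different from the paper's. The paper does not estimate the Gabor matrix of $D=H-\widetilde H$ directly; it proves the more general Theorem~\ref{prop:exp-general} (then applied to $D$) by expanding the \emph{operator} through lattice sampling of its bandlimited symbol, $D=\sum_{\lambda\in\widetilde\Lambda}\sigma_D(\lambda)\,\pi(\lambda)P\pi(\lambda)^\ast$ for a single Schwartz prototype $P$, and splitting $Df=D_{\rm in}f_{\rm in}+D_{\rm out}f_{\rm in}+Df_{\rm out}$. There, $D_{\rm in}$ collects the samples $\lambda\in\widetilde\Lambda\cap S$, whose coefficients are at most $\epsilon\mu$, so $\|D_{\rm in}\|$ is small by the norm equivalence of Proposition~\ref{thm:normequiv}; $D_{\rm out}f_{\rm in}$ is small because the active samples of $D_{\rm out}$ lie in $S^c$, at distance at least $d(\epsilon)$ from the frame coefficients of $f_{\rm in}$, while the Gabor matrix $\langle P\pi(\lambda)\widetilde g,\pi(\widetilde\lambda)\widetilde g\rangle$ of the fixed prototype lies in $\ell^1_s$; and $Df_{\rm out}$ is handled exactly as your $Df_2$. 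The paper's route thus separates ``local smallness of the symbol'' and ``off-diagonal decay'' into two different terms, each controlled by a soft estimate, whereas you compress both into a single pointwise bound on $\langle D\pi_{z''}g,\pi_z g\rangle$ followed by a Schur test. That is a legitimate, more self-contained alternative, and your treatment of $f_2$ and the reduction to the entry estimate are correct.

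The one place where your sketch as written would not close is the combination of the two regimes in the entry estimate. If the near/far threshold is a \emph{fixed} neighborhood of $M$, the far-regime bound is only $C_N\mu\operatorname{dist}(z-z'',M)^{-N}$ with no factor $\epsilon+\tau(d(\epsilon))$, so its contribution to the Schur row sums is a fixed constant times $\mu$ rather than $(\epsilon+\tau(d(\epsilon)))\mu$; and in the near regime the term $\epsilon\mu\|R\|_{L^1}$ coming from $B(z'',d(\epsilon))$ carries no decay in $z-z''$ unless you first project the Rihaczek kernel $R(\pi_z g,\pi_{z''}g)$ onto the band $M$ (exploiting $\supp\mathcal F_s\sigma_D\subseteq M$) so that its total $L^1$ mass itself decays like $w(z-z'')$. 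Both issues are repairable: take the threshold to be of order $d(\epsilon)$, so the far tail contributes $\tau(d(\epsilon))\mu\,w(z-z'')$ after splitting the superpolynomial decay into two factors, and observe that for $\operatorname{dist}(z-z'',M)\lesssim d(\epsilon)$ the concentration point of the band-projected kernel is still within $O(d(\epsilon))$ of $z''$, so the ball $B(z'',d(\epsilon))\subseteq S$ (with the threshold chosen as a suitable fraction of $d(\epsilon)$) still captures all but a superpolynomially small tail. This bookkeeping is exactly what your plan leaves implicit, and it must be carried out for the claimed bound $|\langle D\pi_{z''}g,\pi_z g\rangle|\le C(\epsilon+\tau(d(\epsilon)))\mu\,w(z-z'')$ to be literally true.
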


A generalization of Theorem~\ref{thm:localization1} -- labeled Theorem~\ref{prop:exp-general} --  is proven in Section~\ref{sec:local}. 

Our next main result concerns truncated and mollified versions of the identifier $\sum_n c_n \delta_{nT}$ and provides localized versions of Theorems~\ref{thm:main-simple} and \ref{thm:reconstruction}.  For $S=\R^2$, it reduces to Theorems~\ref{thm:main-simple} and  \ref{thm:reconstruction}.

\begin{theorem}\label{thm:trunc}
 Fix $M$ compact with Lebesgue measure $\mu(M)<1$ and let $(g,a\Z\times b\Z)$, $g\in \mathcal S(\R)$, be a tight frame for $L^2(\R)$ with frame bound 1.
 Let  $S\subseteq I_1\times I_2 \subseteq \R^2$, where  $I_1$ and $I_2$ may coincide with $\R$. Furthermore, choose the tempered distribution $\varphi$ such that $\varphi\geq 0$ and $\widehat\varphi\equiv 1$ on $I_2$ and let 
\begin{equation*}
\widetilde\ident=\sum_{nT\in I_1} c_n \varphi(\cdot-nT).
\end{equation*}
 Then for any $H\in OPW(M)$ with
  $ \|\sigma_H\|_{L^\infty(\R^2)}\leq \mu$ and  
 $\widetilde H\in OPW(M)$ defined via
\begin{align}
\kappa_{\widetilde H}(x+t,x) &= LT  \sum_{j=0}^{L-1} r(t-k_jT)\Big( \sum_{q\in\Z} b_{jq} H\widetilde\ident(t- (k_j-q)T) \phi(x+(k_j-q)T)\Big)\, e^{2\pi i n_j \Omega x}, 
\label{eqn:reconstructionformulaTilde}
\end{align}
one has
  \begin{align*}%
    \label{thm:localization}
    \|Hf- \widetilde H f\|_{L^2(\R)}\leq C\,\epsilon\,\mu\,  \|f\|_{L^2(\R)}
  \end{align*}
  for all $f\in L^2(\R)$  that are $\epsilon$--time-frequency localized on  $S-B\big(d(\epsilon)\big)
$ in the sense of Definition~\ref{def:localization}.
Here $C>0$ is an absolute constant, $r$ and $\phi $ are Schwartz class functions defined as in Theorem~\ref{thm:reconstruction}, but for $\delta>0$ such that $\mu(M+[-3\delta,3\delta]^2)<1$, and $d:(0,1)\longrightarrow \R^+$ is a function independent of $S$  which satisfies $d(\epsilon)=o(\sqrt[k]{1/\epsilon})$ for all $k\in\N$ as $\epsilon\to 0$.
\end{theorem}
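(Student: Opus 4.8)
\emph{Strategy.} The plan is to combine the exact reconstruction formula of Theorem~\ref{thm:reconstruction} with the localization principle of Theorem~\ref{thm:localization1} (in the form of its generalization Theorem~\ref{prop:exp-general}). Write $\ident=\sum_n c_n\delta_{nT}$ for the ideal identifier and let $\mathcal R$ denote the linear map sending a response function $\psi$ to the operator whose kernel equals the right-hand side of \eqref{eqn:reconstructionformulaTilde} with $H\widetilde\ident$ replaced by $\psi$; by Theorem~\ref{thm:reconstruction}, $\mathcal R(H\ident)=H$ for every $H\in OPW(M)$, while by construction $\widetilde H=\mathcal R(H\widetilde\ident)$. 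Linearity of $H$ and of $\mathcal R$ then gives the error identity $H-\widetilde H=\mathcal R\big(H(\ident-\widetilde\ident)\big)$, so it suffices to establish two properties of the error operator $E:=\mathcal R\big(H(\ident-\widetilde\ident)\big)$: \emph{(i)} a symbol bound $\|\sigma_E\|_{L^\infty(\R^2)}\le C_1\mu$, whence $\|\sigma_{\widetilde H}\|_{L^\infty}\le(1+C_1)\mu$; and \emph{(ii)} rapid decay of $\|\sigma_E\|_{L^\infty((I_1\times I_2)-B(r))}$ in $r$, i.e.\ a bound of the form $C_N r^{-N}\mu$ for every $N$. Granting \emph{(i)} and \emph{(ii)}, the conclusion follows: given $\epsilon$, choose $r(\epsilon)$ making the bound in \emph{(ii)} at most $\epsilon\mu$, apply Theorem~\ref{thm:localization1} to the pair $H,\widetilde H$ with common symbol bound $(1+C_1)\mu$ and with its set taken to be $(I_1\times I_2)-B(r(\epsilon))$, on which $\|\sigma_H-\sigma_{\widetilde H}\|_{L^\infty}\le\epsilon\mu$, and put $d(\epsilon):=r(\epsilon)+d_0(\epsilon)$, where $d_0$ is the function supplied by Theorem~\ref{thm:localization1}. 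Since $(I_1\times I_2)-B(r(\epsilon))-B(d_0(\epsilon))=(I_1\times I_2)-B(d(\epsilon))$ and $S\subseteq I_1\times I_2$, any $f$ that is $\epsilon$--time-frequency localized on $S-B(d(\epsilon))$ is also localized on the superset $(I_1\times I_2)-B(d(\epsilon))$, so Theorem~\ref{thm:localization1} gives $\|Hf-\widetilde Hf\|\le C\epsilon\mu\|f\|$; as the decay in \emph{(ii)} is faster than any power, $r(\epsilon)$ and hence $d(\epsilon)$ may be chosen $o(\sqrt[k]{1/\epsilon})$ for all $k$, and $d$ is plainly independent of $S$.

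\emph{Analysis of the error before reconstruction.} Next I would split $\ident-\widetilde\ident=v_{\mathrm m}+v_{\mathrm t}$ with $v_{\mathrm m}:=\ident-\ident*\varphi$ (the \emph{mollification} part) and $v_{\mathrm t}:=\ident*\varphi-\widetilde\ident=-\sum_{nT\notin I_1}c_n\varphi(\cdot-nT)$ (the \emph{truncation} part). Because $\widehat\ident$ is a weighted Dirac comb and $\widehat\varphi\equiv 1$ on $I_2$, the Fourier transform $\widehat v_{\mathrm m}=\widehat\ident\,(1-\widehat\varphi)$ is supported off $I_2$; passing through $H$ via the spreading representation \eqref{eqn:spreadingrepresentation} only shifts frequencies by amounts lying in the (bounded) frequency projection of $M$, so $\widehat{Hv_{\mathrm m}}$ vanishes on a fixed, $S$-independent shrinking of $I_2$. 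For $v_{\mathrm t}$, the Schwartz decay of $\varphi$ gives $\sum_{nT\notin I_1}|c_n|\,|\varphi(y-nT)|\lesssim_N\operatorname{dist}(y,I_1^c)^{-N}$ for $y\in I_1$ and every $N$; since the time projection of $\supp\eta_H$ has bounded width, the time-varying impulse response representation together with the norm equivalence \eqref{eqn:equivalentnorms} gives $|Hv_{\mathrm t}(y)|\lesssim_N\operatorname{dist}(y,I_1^c)^{-N}\mu$ once $\operatorname{dist}(y,I_1^c)$ exceeds that width. In short, $Hv_{\mathrm m}$ carries frequency content only outside a shrink of $I_2$, while $Hv_{\mathrm t}$ decays rapidly inside $I_1$, both with size controlled by $\mu$. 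Here one works in the modulation-type space used in \cite{PW12} in which weighted Dirac combs have bounded norm uniformly in the truncation and on which $H$ and $\mathcal R$ act boundedly, so that no $S$-dependent constants arise.

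\emph{Propagation through the reconstruction.} It then remains to verify that $\mathcal R$ preserves these two localization features up to fixed enlargements, which is exactly what yields \emph{(i)}--\emph{(ii)}. This is carried out by revisiting the proof of Theorem~\ref{thm:reconstruction} in \cite{PW12} with the error $H(\ident-\widetilde\ident)$ in place of the response $H\ident$; the only structural ingredients are the support condition $r(t)\widehat\phi(\gamma)=0$ off $(-\delta,T+\delta)\times(-\delta,\Omega+\delta)$, the partition-of-unity identities \eqref{eq:r_phi_2}, the modulations $e^{2\pi i n_j\Omega x}$, and the periodizing sums $\sum_q b_{jq}(\cdot)\,\phi(x+(k_j-q)T)$. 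Tracking supports and decay through these: the compact frequency support of $\phi$ confines the frequency content of $\sigma_{\mathcal R(Hv_{\mathrm m})}$ to an $O(\delta)$-neighborhood of that of $Hv_{\mathrm m}$, so $\sigma_{\mathcal R(Hv_{\mathrm m})}$ is negligible on a fixed shrink of $I_1\times I_2$; and the Schwartz decay of $r$ and $\phi$ turns the interior decay of $Hv_{\mathrm t}$ into faster-than-polynomial decay of $\sigma_{\mathcal R(Hv_{\mathrm t})}(x,\xi)$ for $x$ well inside $I_1$, up to a fixed shift. Combining these on $(I_1\times I_2)-B(r)$ yields \emph{(ii)}. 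The bound \emph{(i)}, and the fact that $\widetilde H\in OPW(M+[-3\delta,3\delta]^2)$ --- a compact set of measure below one by the choice of $\delta$, which is all Theorem~\ref{thm:localization1} needs --- follow from the $L^2$-boundedness of each step of $\mathcal R$ together with the same bookkeeping.

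\emph{Main obstacle.} The hard part will be this last propagation step. The reconstruction mixes frequency content through the modulations $e^{2\pi i n_j\Omega x}$ and reassembles the operator from conditionally summed, non-compactly-supported kernels $\phi$, so one must show --- quantitatively, and with all buffer constants independent of $S$ --- that the error is reintroduced into $I_1\times I_2$ only up to a fixed $O(\delta)$ margin in frequency and only with a Schwartz-small tail in time; concretely, this means redoing the estimates of \cite{PW12} for the perturbed response and keeping careful track of every constant. A secondary, more routine technicality, handled as in the proof of Theorem~\ref{thm:reconstruction}, is that $Hv_{\mathrm m}$ is merely a tempered distribution (the image of the Dirac-comb-type $v_{\mathrm m}$), so all support and decay statements must be read weakly and $\mathcal R$ invoked on the appropriate distribution space on which it is continuous.
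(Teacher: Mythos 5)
Your overall architecture coincides with the paper's: reduce the claim to a global symbol bound for $\widetilde H$ together with an estimate $\|\sigma_H-\sigma_{\widetilde H}\|_{L^\infty}\lesssim\epsilon\mu$ on (a fixed erosion of) $S$, then invoke Theorem~\ref{thm:localization1}; and split the error into a mollification part and a truncation part. Your treatment of the truncation part is also in the spirit of the paper's Step~2, which carries out the deferred estimate via $M^1$/$M^\infty$ duality, the Banach algebra property of $M^1(\R^2)$, and the Schwartz decay of $V_{\phi^\ast}r$.

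The genuine gap is in the mollification part, where you miss the one idea that makes the proof work and replace it with a propagation argument that, as stated, is incorrect. The paper does not push $H(\ident-\varphi\ast\ident)$ through the reconstruction map at all. Instead it observes that $H(\varphi\ast\ident)=(H\circ C_\varphi)\ident$ with $C_\varphi:f\mapsto f\ast\varphi$, that $\eta_{H\circ C_\varphi}(t,\nu)=\eta_H(\cdot,\nu)\ast\varphi\,(t)$, hence $H\circ C_\varphi\in OPW(M+[-\delta,\delta]\times\{0\})$ with $\sigma_{H\circ C_\varphi}=\sigma_H\cdot\widehat\varphi$, and therefore that Theorem~\ref{thm:reconstruction} applied to $H\circ C_\varphi$ gives the \emph{exact} identity $\mathcal R\big(H(\varphi\ast\ident)\big)=H\circ C_\varphi$. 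The mollification error at symbol level is thus exactly $\sigma_H(1-\widehat\varphi)$, which is controlled by $\epsilon\mu$ on all of $\R\times I_2\supseteq S$ --- no shrinking of $I_2$, no tail estimate, and no $S$-dependent constants. Your substitute claim, that $\widehat{Hv_{\mathrm m}}$ vanishes off a shrink of $I_2$ and that ``the compact frequency support of $\phi$ confines the frequency content of $\sigma_{\mathcal R(Hv_{\mathrm m})}$ to an $O(\delta)$-neighborhood,'' fails as a support statement: the $\xi$-dependence of the reconstructed symbol is the Fourier transform in $t$ of $r(t-k_jT)\sum_q b_{jq}\,\psi(t-(k_j-q)T)\,\phi(x-t+(k_j-q)T)\,e^{2\pi i n_j\Omega(x-t)}$, and the factor $r$ is compactly supported in time, so $\widehat r$ has unbounded support and multiplication by $r$ smears the frequency content of $\psi=Hv_{\mathrm m}$ over all of $\R$ (with only Schwartz decay), while the modulations shift it by up to $L\Omega$. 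One could perhaps salvage a Schwartz-tail version of your bound \emph{(ii)}, but that is exactly the hard uniform estimate you defer to the ``main obstacle,'' it would force an $S$-independent but $M$-dependent shrinking of $I_2$ that the theorem's conclusion does not allow for when $\widehat\varphi\equiv 1$ on $I_2$ exactly, and it is entirely unnecessary once the identity $\sigma_{\mathcal R(H(\varphi\ast\ident))}=\sigma_H\widehat\varphi$ is in hand. As written, the proposal therefore does not constitute a proof of the mollification half of the theorem.
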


For rectangular bandlimitation domains $M= [0, T] {\times}[-\frac \Omega 2, \frac \Omega 2]$ one can choose the   identifier $\sum\limits_{nT\in I_1} \varphi(\cdot-nT)$ and define $\widetilde H$ via the   formula
  \begin{align*}
    \kappa_{\widetilde H}(x+t,x)=T\sum_{n\in\Z} \big(H \sum_{nT\in I_1} \varphi(\cdot-nT)\big)
(t+nT)\, {\phi(x-nT)}\,.
  \end{align*}

Note that this theorem is completely analogous to the condition~\eqref{eqn:approximatefunctionreconstruction} for localized function sampling. Due to the  two-dimensional nature of the operator, however, localization is an issue in both time (restricting to a finite number of deltas) and frequency (replacing the deltas by approximate identities). If one is interested in localization only in time or only in frequency, one can choose one of the $I_i$ to be $\R$ and thus consider
\begin{equation*}
  \widetilde\ident=\sum_{nT\in I_1} c_n\delta_{nT} \quad \text{or} \quad \widetilde\ident=\sum_n c_n \varphi(\cdot-nT),  
\end{equation*}
again with $(c_n)\equiv 1$ in case of rectangular domains $M$.

\subsection{Local sampling of operators}

An additional important structural difference between classical sampling and operator sampling remains: in Theorems~\ref{thm:main-simple} and \ref{thm:trunc}, the
reconstruction formulas \eqref{eqn:operatorreconstruction-simple} and \eqref{eqn:reconstructionformula} involve as    ``coefficients''  functions, not scalars. Among the many possibilities to discretely represent the operator's response to the identifier $w$,
we consider Gabor representations of this sample function. A time-frequency localized subset of the coefficients will then yield a corresponding local approximation of the operator. Theorem~\ref{prop:exp} below establishes a reconstruction formula based on Gabor coefficients  that allows for the exact recovery of the operator; Theorem~\ref{thm:localsamp} shows that a local subset of the coefficients yields a local approximation of the operator. Again, one obtains considerably simpler formulas for rectangular domains, but for reasons of brevity, we focus on the comprehensive setup of arbitrary domains.

For a Schwartz class function $\phi$ and a tempered distribution $f$ on $\R$ we call
\begin{equation*}
 V_\phi f(x,\xi)=\langle f, \mathcal M_\xi  \mathcal T_x \phi\rangle, \quad x,\xi\in\R,
\end{equation*}
the {\em short-time Fourier transform} of $f$ with respect to the window function $\phi$. Throughout this paper, all pairings $\langle \cdot, \cdot \rangle$ are taken to be linear in the first component and antilinear in the second.

\begin{theorem}\label{prop:exp}
For $M$ compact with Lebesgue measure $\mu(M)<1$ there exists $L$ prime, $\delta>0$, $T,\Omega>0$ with $T\Omega=1/L$, and $L$-periodic sequences $\{c_n\}_n$, $\{ b_{jq}\}_q$, $j=0,\ldots,L-1$, so that for $H\in OPW(M)$, 
%
\begin{align}\label{eq:propexp}
 \sigma_H(x,\xi)
= \frac{LT}{\beta_1\beta_2} & \sum_{j=0}^{L-1} e^{-2\pi i(xn_j\Omega + \xi  k_j T )} e^{2\pi i n_j\Omega k_j T} \\ &\quad \notag \sum_{m,\ell\in\Z} \!\sigma^{(j)}_{m,\ell} \ V_\phi r\big(x \!-\! \big(\frac{m  L}{\beta_1}+k_j\big) T, \ \xi\!-\!\big(\frac{\ell L }{\beta_2}+n_j\big)\Omega\big),\end{align}
where
\begin{align*}
\sigma^{(j)}_{m,\ell} = \sum_{q\in\Z}\  b_{jq}\ \phi \big((-q -k_j-mL/\beta_1)T\big)\ \langle H\sum_n c_n\delta_{nT} ,\ \mathcal T_{qT}\mathcal M_{\ell \Omega L /\beta_2}\, r\rangle,
\end{align*}
and
$r,\phi $ are Schwartz class functions such that $r$ and $\widehat{\phi}$ are real valued and satisfy\footnote{For example, we can choose $r=\chi_{[0,T)}\ast \varphi_\delta$, where $\varphi_\delta$ is an approximate identity, that is, a non-negative function with $\varphi_\delta\in\mathcal S(\R)$, $\supp \varphi_\delta\subseteq [-\delta/2,\delta/2]$, and $\int\varphi_\delta=1$.}
\begin{align}\label{eq:r_phi_1a}
r(t) = 0 \textnormal{  if  } \ t \notin (-\delta, \delta + T), \quad
\widehat{\phi}(\gamma)=0 \textnormal{  if  } \ \gamma\notin (-\delta-\Omega/2,\delta+\Omega/2),
\end{align}
and
\begin{equation}\label{eq:r_phi_2a}
\sum_{k\in\Z} |r(t + kT)|^2 \equiv 1 \equiv\sum_{n\in\Z} |\widehat{\phi}(\gamma + n\Omega)|^2,
\end{equation}
with oversampling rates
$\beta_2\geq 1 +2\delta /T$ and $\beta_1\geq 1 +2\delta /\Omega$.\footnote{Then the Gabor systems $\{r_{k,l}=\mathcal T_{kT}\mathcal M_{\ell/\beta_2 T}\,r\}_{k,\ell\in\Z}$, $\{\mathcal T_{n\Omega}\mathcal M_{m/\beta_1 \Omega}\, \widehat \phi \}_{m,n\in\Z}$, and $\{ \Phi_{m,-n,l,-k}=\mathcal T_{(m T L/{\beta_1},\ell L \Omega)}\mathcal M_{(n\Omega, /{\beta_2}, kT)}\}_{m,n,k,\ell\in\Z}$ are tight Gabor frames with $A=\beta_2/T$, $A=\beta_1/\Omega$, and $A=\beta_1\beta_2/(T\Omega)=\beta_1\beta_2 L$, respectively, whenever $\beta_2\geq 1 +2\delta /T$ and $\beta_1\geq 1 +2\delta /\Omega$.
}
\end{theorem}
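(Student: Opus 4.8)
The plan is to derive \eqref{eq:propexp} from the kernel reconstruction formula \eqref{eqn:reconstructionformula} of Theorem~\ref{thm:reconstruction} by expanding the two Schwartz functions $r$ and $\widehat\phi$ in the tight Gabor frames listed in the last footnote, and then transporting everything to the symbol side via $\sigma_H = \mathcal F_s\eta_H$, using the correspondence $\kappa_H(x+t,x) = \eta_H(t,\cdot)^{\vee}$ in the second variable together with the elementary relation $\eta_H(t,\gamma)=\mathcal F_x^{-1}[\kappa_H(x+t,x)](\gamma)$. First I would start from \eqref{eqn:reconstructionformula}, written as
\begin{equation*}
\kappa_H(x+t,x) = LT \sum_{j=0}^{L-1} e^{2\pi i n_j\Omega x}\, r(t-k_jT)\sum_{q\in\Z} b_{jq}\,(Hc)(t-(k_j-q)T)\,\phi(x+(k_j-q)T),
\end{equation*}
where I abbreviate $Hc := H\sum_n c_n\delta_{nT}$. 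I would then replace $r(t-k_jT)$ by its expansion in the tight frame $\{\mathcal T_{kT}\mathcal M_{\ell/(\beta_2 T)}r\}$ — note $\beta_2\geq 1+2\delta/T$ guarantees tightness with bound $\beta_2/T$ by \eqref{eq:r_phi_2a} and the support condition \eqref{eq:r_phi_1a} — and similarly replace $\phi(x+(k_j-q)T)$ (equivalently, work with $\widehat\phi$) by its expansion in $\{\mathcal T_{n\Omega}\mathcal M_{m/(\beta_1\Omega)}\widehat\phi\}$, tight with bound $\beta_1/\Omega$. The product of these two expansions reorganizes into the two-dimensional tight Gabor frame $\{\Phi_{m,-n,\ell,-k}\}$ on $\R^2$ with bound $\beta_1\beta_2 L$, which is exactly the normalization producing the prefactor $LT/(\beta_1\beta_2)$.

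The key bookkeeping step is to match the translation and modulation parameters. The expansion of $r$ in its frame produces translates of $r$ by $(mL/\beta_1 + k_j)T$ in the $t$-variable (after shifting the frame lattice to absorb the $k_jT$ offset) and the expansion of $\widehat\phi$ produces translates by $(\ell L/\beta_2 + n_j)\Omega$ in the $\gamma$-variable; upon applying $\mathcal F_s$ these become the arguments $x - (mL/\beta_1+k_j)T$ and $\xi - (\ell L/\beta_2 + n_j)\Omega$ of $V_\phi r$ appearing in \eqref{eq:propexp}. The modulation parameters of the frame elements, after the symplectic Fourier transform, turn into the exponential prefactors $e^{-2\pi i(xn_j\Omega + \xi k_jT)}e^{2\pi i n_j\Omega k_jT}$; the cross term $e^{2\pi i n_j\Omega k_jT}$ is the price of re-centering the $j$-th summand so that its time-frequency shift is expressed relative to the lattice point $(k_jT, n_j\Omega)$ rather than the origin. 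The inner coefficients $\sigma^{(j)}_{m,\ell}$ collect: (i) the sequence $b_{jq}$ from \eqref{eqn:reconstructionformula}; (ii) the sampled values $\phi((-q-k_j-mL/\beta_1)T)$, which arise because the $\phi(x+(k_j-q)T)$ factor, once $x$ is replaced by the frame translate, is evaluated at the lattice points; and (iii) the pairings $\langle Hc, \mathcal T_{qT}\mathcal M_{\ell\Omega L/\beta_2}r\rangle$, which are precisely the short-time-Fourier-type coefficients of the response function $Hc$ against the time-frequency shifted window $r$ — these are the discrete, computable samples. I would verify that the identity $\langle Hc, \mathcal M_\xi\mathcal T_x\phi\rangle = V_\phi(Hc)(x,\xi)$ together with $V_\phi r(x,\xi) = \overline{V_r\phi(-x,-\xi)}\,e^{-2\pi i x\xi}$ (and the reality of $r,\widehat\phi$) makes all the conjugates and signs come out as written.

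The main obstacle I expect is purely the index juggling: keeping straight the four interacting lattices (the operator-sampling lattice $T\Z$, the Gabor lattices for $r$ and for $\widehat\phi$ with oversampling $\beta_2,\beta_1$, and the Zak-transform-type periodization at rate $1/L$), making sure the frame expansions are justified in the right topology (the series for $r,\phi$ converge in $\mathcal S$, hence the substitutions are licit under the $L^2(\R)$ and uniform convergence already asserted in Theorem~\ref{thm:reconstruction}), and confirming that the support conditions \eqref{eq:r_phi_1a} with $\delta$ small enough — specifically the requirement $\beta_2\geq 1+2\delta/T$, $\beta_1\geq 1+2\delta/\Omega$ — are exactly what is needed for the one-dimensional systems $\{\mathcal T_{kT}\mathcal M_{\ell/(\beta_2T)}r\}$ and $\{\mathcal T_{n\Omega}\mathcal M_{m/(\beta_1\Omega)}\widehat\phi\}$ to be tight frames (this is the standard painless-nonorthogonal-expansion criterion: a function supported in an interval of length $\leq$ the reciprocal of the modulation spacing, with the partition-of-unity-of-squares condition \eqref{eq:r_phi_2a}, generates a tight frame). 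Everything else is a direct, if lengthy, computation that I would present compactly, pointing to \eqref{eqn:reconstructionformula} and the footnoted frame facts rather than reproving them.
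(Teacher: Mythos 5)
Your starting point is where the argument breaks down: the kernel formula \eqref{eqn:reconstructionformula} of Theorem~\ref{thm:reconstruction} is valid for windows $r,\phi$ satisfying the partition-of-unity condition \eqref{eq:r_phi_2}, $\sum_k r(t-kT)\equiv 1$, whereas the tightness of the Gabor systems $\{\mathcal T_{kT}\mathcal M_{\ell/(\beta_2T)}r\}$ and $\{\mathcal T_{n\Omega}\mathcal M_{m/(\beta_1\Omega)}\widehat\phi\}$ that you want to expand in requires the squared condition \eqref{eq:r_phi_2a}, $\sum_k|r(t+kT)|^2\equiv 1$. For a continuous $r$ supported in $(-\delta,T+\delta)$ with genuine overlap these two normalizations are mutually exclusive: where only two translates are nonzero, $a+b=1$ and $a^2+b^2=1$ force $ab=0$, so $r$ would have to be an indicator function. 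The paper flags exactly this distinction in the remark following the theorem statement. Consequently you cannot feed a single pair $(r,\phi)$ satisfying \eqref{eq:r_phi_1a}--\eqref{eq:r_phi_2a} into \eqref{eqn:reconstructionformula} and then also use it as a tight-frame window. The paper avoids this by branching off \emph{before} either normalization is used, at the identity \eqref{eq:a4} relating $r(t)\widehat\phi(\nu)\boldsymbol{\eta}(t+k_jT,\nu+n_j\Omega)$ to the Zak transform of $H\ident$; that identity is derived from Poisson summation and inversion of the Gabor matrix $\mathbf{A}$ and needs only the support conditions \eqref{eq:r_phi_1a}.

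Second, even granting compatible windows, expanding $r(t-k_jT)$ and $\phi(x+(k_j-q)T)$ in Gabor frames generated by themselves produces ambiguity-function coefficients such as $\langle\mathcal T_{k_jT}r,\,\mathcal T_{kT}\mathcal M_{\ell/(\beta_2T)}r\rangle$, not the measurements $\langle H\sum_nc_n\delta_{nT},\,\mathcal T_{qT}\mathcal M_{\ell\Omega L/\beta_2}r\rangle$ that define $\sigma^{(j)}_{m,\ell}$. In the paper those pairings arise because it is the spreading function $\boldsymbol{\eta}$ (equivalently $\boldsymbol{\sigma}$) that is expanded in the two-dimensional tight frame $\{\Psi_{m,n,\ell,k}\}$ with window $r\otimes\widehat\phi$; the support condition \eqref{eq:r_phi_3} annihilates all coefficients except those with $(n,k)=(n_j,k_j)$, and substituting \eqref{eq:a4} into $\langle\boldsymbol{\eta},\Psi_{m,n_j,\ell,k_j}\rangle$ makes the $t$-integral collapse to $\langle H\ident,\mathcal T_{qT}\mathcal M_{\ell L\Omega/\beta_2}r\rangle$ and the $\nu$-integral to the sample $\phi\big((-q-k_j-mL/\beta_1)T\big)$. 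Your plan never applies the analysis operator of any Gabor frame to the response function $H\ident$ itself, so the stated coefficients cannot emerge from it. To repair the argument you would need to (i) rerun the Zak-transform computation up to \eqref{eq:a4} with windows normalized by \eqref{eq:r_phi_2a}, and (ii) replace ``expand the windows'' by ``expand $\boldsymbol{\sigma}$ in the tensor frame and evaluate its coefficients via \eqref{eq:a4}'' --- which is precisely the paper's proof.
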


Observe that the reconstruction formulas given in Theorems~\ref{thm:reconstruction} and \ref{thm:trunc} require the functions $r$ and $\widehat\phi$ to generate partitions of unity \eqref{eq:r_phi_2a}, while \eqref{eq:r_phi_2} above requires that the functions obtained by taking the square of the modulus form  partitions of unity.  

\begin{theorem}\label{thm:localsamp}  Fix $M$ compact with $\mu(M)<1$, let $T,\Omega,L$ and $w,r,\phi$ be defined in Theorem~\ref{thm:reconstruction}, and let $(g,a\Z\times b\Z)$, $g\in \mathcal S(\R)$, be a tight frame for $L^2(\R)$ with frame bound 1. 

 Then $H\in OPW(M)$ with
  $ \|\sigma_H\|_{L^\infty(\R^2)}\leq \mu$,    and $\widetilde H\in OPW(M)$ defined via its symbol
\begin{equation*}
\widetilde\sigma (x,\xi)= \frac{LT}{\beta_1 \beta _2} \sum_{j=0}^{L-1} e^{-2\pi i(xn_j\Omega + \xi  k_j T )} e^{2\pi i n_j\Omega k_j T} \hspace{-.7cm}\sum_{(mLT/\beta_1,\ell L\Omega/\beta_2)\in S}\hspace{-.4cm} \widetilde\sigma^{(j)}_{m,\ell} \,V_\phi r(x - \big(\frac{m  L}{\beta_1}+k_j\big) T, \ \xi-\big(\frac{\ell L }{\beta_2}+n_j\big)\Omega\big), 
\end{equation*}
where
\begin{align*}
\widetilde \sigma^{(j)}_{m,\ell} = \sum_{q\in\Z}\  b_{jq}\ \phi \big((-q -k_j-mL/\beta_1)T\big)\ \langle H\widetilde\ident,\ \mathcal T_{qT}\mathcal M_{\ell \Omega L /\beta_2}\, r\rangle,
\end{align*}
satisfy
\begin{align*}
\|Hf - \widetilde H f\|_{L^2(\R)}\leq &\, C \, \varepsilon \, \mu\, \|f\|_{L^2(\R)}
\end{align*}
for all $f\in L^2(\R)$ which are $\epsilon$--time-frequency localized on $S-B(d(\epsilon))$ with respect to $(g,a\Z\times b\Z)$ in the sense of Definition~\ref{def:localization}.
Again $S\subseteq I_1\times I_2 \subset \R^2$ is given, $\varphi$ and $\widetilde w$ are defined as in Theorem~\ref{thm:trunc}, $C>0$, and $d$ can again be chosen independent of $S$ with $d(\epsilon)=o(\sqrt[k]{1/\epsilon})$ for all $k\in\N$ as $\epsilon\to 0$.

\end{theorem}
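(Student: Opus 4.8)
The plan is to convert the assertion into two pointwise bounds on the Kohn–Nirenberg symbols and then feed these into the localization principle of Theorem~\ref{thm:localization1} (equivalently, its generalization Theorem~\ref{prop:exp-general}). Concretely, it suffices to exhibit a function $\rho\colon(0,1)\to\R^+$, independent of $S$ and satisfying $\rho(\epsilon)=o(\sqrt[k]{1/\epsilon})$ for all $k\in\N$, together with constants $C',C''>0$ depending only on $M$ and on the fixed data $r,\phi,\varphi,g$, such that
\[
\|\widetilde\sigma\|_{L^\infty(\R^2)}\le C'\mu\qquad\text{and}\qquad \|\sigma_H-\widetilde\sigma\|_{L^\infty(S-B(\rho(\epsilon)))}\le C''\,\epsilon\,\mu.
\]
Granting this, Theorem~\ref{thm:localization1}, applied with its set taken to be $S-B(\rho(\epsilon))$ and with $H$ and $\widetilde H$ viewed as elements of a common compact band $M'$ (the usual $\delta$-fattening of $M$), yields $\|Hf-\widetilde Hf\|_{L^2(\R)}\le C\,\epsilon\,\mu\,\|f\|_{L^2(\R)}$ whenever $f$ is $\epsilon$--time-frequency localized on $(S-B(\rho(\epsilon)))-B(d_0(\epsilon))=S-B(\rho(\epsilon)+d_0(\epsilon))$, with $d_0$ the function supplied by Theorem~\ref{thm:localization1}. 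Setting $d(\epsilon):=\rho(\epsilon)+d_0(\epsilon)$ — still $o(\sqrt[k]{1/\epsilon})$ for every $k$ and still independent of $S$, up to a harmless rescaling of $\epsilon$ by a fixed constant — finishes the argument.

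To produce the two bounds, recall from Theorem~\ref{prop:exp} that $\sigma_H$ is given exactly by \eqref{eq:propexp} with coefficients $\sigma^{(j)}_{m,\ell}$ built from $Hw$, whereas $\widetilde\sigma$ uses the same formula but with $w$ replaced by $\widetilde w$ (coefficients $\widetilde\sigma^{(j)}_{m,\ell}$) and with the lattice sum restricted to indices $(m,\ell)$ with $(mLT/\beta_1,\ell L\Omega/\beta_2)\in S$. Accordingly write $\sigma_H-\widetilde\sigma=E_1+E_2$, where $E_1$ is the full lattice sum with coefficients $\sigma^{(j)}_{m,\ell}-\widetilde\sigma^{(j)}_{m,\ell}$ (the effect of truncating and mollifying the identifier) and $E_2$ is the sum of the terms $\widetilde\sigma^{(j)}_{m,\ell}V_\phi r(\cdots)$ over lattice points outside $S$ (the effect of truncating the Gabor expansion). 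Both are handled using that $V_\phi r\in\mathcal S(\R^2)$, since $r,\phi\in\mathcal S(\R)$, and that the finitely many shift vectors $(k_jT,n_j\Omega)$ lie in a fixed ball $B(R_0)$. For $E_2$ one first notes the uniform bound $\sup_{j,m,\ell}|\widetilde\sigma^{(j)}_{m,\ell}|\le C\mu$: the $b_{jq}$ form a bounded periodic sequence, $\sum_q|\phi((-q-k_j-mL/\beta_1)T)|$ is bounded independently of $j,m$ because $\phi$ is Schwartz, and $|\langle H\widetilde w,\mathcal T_{qT}\mathcal M_{\ell\Omega L/\beta_2}r\rangle|\le\|V_r(H\widetilde w)\|_{L^\infty}$ is bounded by a constant multiple of $\|H\|_{\mathcal L(L^2(\R))}\asymp\mu$ uniformly in $q,\ell$ — the last estimate coming from the quasi-periodic structure of the output of a bandlimited operator applied to a (truncated) weighted delta train, as already used in the proof of Theorem~\ref{prop:exp}, together with \eqref{eqn:equivalentnorms}. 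Now, for $(x,\xi)\in S-B(\rho)$ every omitted lattice point $p=(mLT/\beta_1,\ell L\Omega/\beta_2)\notin S$ satisfies $|(x,\xi)-p|\ge\rho$, so the argument of $V_\phi r$ in that term has norm at least $\rho-R_0$; invoking for each $N$ the bound $|V_\phi r(u,v)|\le C_N(1+|(u,v)|)^{-N}$ and summing over the lattice gives $|E_2(x,\xi)|\le C_N'\,\mu\,(\rho-R_0)^{2-N}$ for $N>2$. The familiar diagonal choice $N=N(\epsilon)\to\infty$ then yields $\rho_2(\epsilon)=o(\sqrt[k]{1/\epsilon})$, depending only on the fixed data, with $|E_2|\le\epsilon\mu$ on $S-B(\rho_2(\epsilon))$; this is the same mechanism that underlies \eqref{eqn:approximatefunctionreconstruction} and the proof of Theorem~\ref{thm:localization1}. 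The same triangle inequality together with the uniform coefficient bound and the $\ell^1$-summability of $V_\phi r$ over the lattice gives the global estimate $\|\widetilde\sigma\|_{L^\infty(\R^2)}\le C'\mu$.

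For $E_1$ we have $\sigma^{(j)}_{m,\ell}-\widetilde\sigma^{(j)}_{m,\ell}=\sum_q b_{jq}\,\phi((-q-k_j-mL/\beta_1)T)\,\langle H(w-\widetilde w),\mathcal T_{qT}\mathcal M_{\ell\Omega L/\beta_2}r\rangle$. Since $\supp\mathcal F_s\sigma_H\subseteq M$ is compact, $H$ displaces time-frequency content only within $M$; combined with the Schwartz concentration of $r$ near its support and of $\widehat\varphi$ near the complement of its zero set, $\langle H(w-\widetilde w),\mathcal T_{qT}\mathcal M_{\ell\Omega L/\beta_2}r\rangle$ is rapidly decaying in the distance of $(qT,\ell\Omega L/\beta_2)$ from the ``defect set'' of $w-\widetilde w$ — points whose time coordinate lies outside or within a bounded distance of $\partial I_1$, or whose frequency coordinate lies outside or within a bounded distance of $\partial I_2$. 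Combining this with the rapid decay of $q\mapsto\phi((-q-k_j-mL/\beta_1)T)$ around $q\approx-mL/\beta_1$ and of $V_\phi r(x-(\tfrac{mL}{\beta_1}+k_j)T,\xi-(\tfrac{\ell L}{\beta_2}+n_j)\Omega)$ around $(x,\xi)\approx(mLT/\beta_1,\ell L\Omega/\beta_2)$, one finds that for $(x,\xi)$ at distance $\ge\rho$ inside $I_1\times I_2\supseteq S$ all indices $(m,\ell)$ with non-negligible contribution must be far from the defect set, so $|E_1(x,\xi)|\le C_N\,\mu\,\rho^{-N}$ for every $N$; as before this yields $\rho_1(\epsilon)=o(\sqrt[k]{1/\epsilon})$, depending on $M,\mu,r,\phi,\varphi$ but not on $S$, with $|E_1|\le\epsilon\mu$ on $S-B(\rho_1(\epsilon))$. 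Taking $\rho(\epsilon):=\max\{\rho_1(\epsilon),\rho_2(\epsilon)\}$ gives $\|\sigma_H-\widetilde\sigma\|_{L^\infty(S-B(\rho(\epsilon)))}\le2\epsilon\mu$, and the reduction of the first paragraph concludes the proof.

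I expect the estimate for $E_1$ to be the main obstacle: it is precisely the quantitative incarnation of the statement that a bandlimited operator spreads time-frequency content only within the compact set $M$, so that replacing the ideal delta train $w$ by its truncated and mollified copy $\widetilde w$ perturbs only those Gabor coefficients $\sigma^{(j)}_{m,\ell}$ indexed near the boundary of $I_1\times I_2$. This is the technical core of Theorem~\ref{thm:trunc}, and the argument there transfers to the present Gabor-coefficient formulation with essentially only notational changes; the term $E_2$ and the global bound, by contrast, are routine consequences of the Schwartz decay of $V_\phi r$.
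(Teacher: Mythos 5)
Your proposal follows the same architecture as the paper's proof: establish a global bound $\|\widetilde\sigma\|_{L^\infty(\R^2)}\lesssim\mu$ and a local bound $\|\sigma_H-\widetilde\sigma\|_{L^\infty(S-B(\rho(\epsilon)))}\lesssim\epsilon\mu$ using the uniform coefficient bound $|\sigma^{(j)}_{m,\ell}|\le\widetilde B\mu$ (via H\"older against $\|\Phi_{m,-n_j,\ell,-k_j}\|_1$ and \eqref{eqn:iso}) together with the Schwartz decay of $V_\phi r$, and then feed these into Theorem~\ref{thm:localization1} with an enlarged exclusion radius $d(\epsilon)=\rho(\epsilon)+d_0(\epsilon)$. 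Your treatment of $E_2$ and of the global bound is exactly the paper's argument. The one substantive difference is that you explicitly isolate the term $E_1$, i.e.\ the discrepancy $\sigma^{(j)}_{m,\ell}-\widetilde\sigma^{(j)}_{m,\ell}$ caused by replacing $w$ with $\widetilde w$ in the inner products; the paper's written proof estimates only the lattice-tail $\sum_{(mLT/\beta_1,\ell L\Omega/\beta_2)\notin S}\sigma^{(j)}_{m,\ell}V_\phi r(\cdots)$ with the \emph{exact} coefficients built from $Hw$ and tacitly identifies these with the $\widetilde\sigma^{(j)}_{m,\ell}$ of the statement, leaving the identifier-replacement error to be imported from Theorem~\ref{thm:trunc}. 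Your reduction of $E_1$ to the mechanism of Theorem~\ref{thm:trunc} (the factorization $H\widetilde w=H_C w_2$ with $H_C=H\circ C_\varphi$, exact agreement of $H_Cw$ and $H_Cw_2$ on a neighborhood of $I_1$, and rapid decay of the perturbed inner products away from the defect set) is the right way to close that step, so on this point your write-up is actually more complete than the paper's; carrying it out requires the uniform bound $|\langle H\widetilde w,\mathcal T_{qT}\mathcal M_{\ell\Omega L/\beta_2}r\rangle|\lesssim\mu$, which follows from $\|\widetilde w\|_{M^\infty(\R)}<\infty$ and Theorem~\ref{thm:normequiv} exactly as in the proof of Theorem~\ref{thm:trunc}. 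I see no gap.
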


The discrete representations introduced in Theorems~\ref{prop:exp} and \ref{thm:localsamp} resolve a fundamental conceptual difference between classical sampling and operator sampling. In contrast to classical sampling, where the sampling values can be extracted individually, the contributions of the different Dirac-deltas in the operator sampling formula are combined in a single function and cannot easily be separated. Hence, while choosing a higher sampling rate in the function case yields more information,  in the operator case, this additional information is mixed in an inseparable way. These aliasing effects \cite{Pfander06-3} make it impossible to obtain redundant representations merely by oversampling in Theorem~\ref{thm:main-simple} or Theorem~\ref{thm:reconstruction}. In reconstruction formula~\eqref{eq:propexp}, however, the oversampling parameters $\beta_i$ can be chosen arbitrarily, allowing for representations of arbitrarily large redundancy.

This interplay of  large redundancy and  good local representation properties of the discrete coefficients can be exploited to coarsely quantize bandlimited operators, i.e., to represent these samples by values from a finite alphabet which allow for approximate recovery via the same reconstruction formulas as in Theorems~\ref{prop:exp} and \ref{thm:localsamp}. For such methods, as they have been studied in the mathematical literature for frame expansions over $\R^n$ \cite{BPY,BO,lapoyi08} or the space of bounded bandlimited  functions on $\R$ \cite{DaubDev03,Gunturk03exp,DGK10}, the possibility to oversample is of crucial importance. We will, however, leave this to future work.

\section{Local approximation of bandlimited operators}\label{sec:local}

In this section we show that a local approximation of an operator's symbol always yields a local approximation of the operator in the sense of Definition~\ref{def:localization}.
The given results are of general interest and will be stated in more general terms than  other results in this paper.  This does not increase the difficulty of proof, but necessitates to recall  additional terminology from time-frequency analysis.
%

For that, recall that for any full rank lattice $\Lambda={\rm A}\Z^{2d}\subseteq \R^{2d}$, $\det {\rm A}\neq 0$,  $\ell_s^{p}(\Lambda)$ denotes the set of sequences $(c_\lambda)_{\lambda\in\Lambda}$ for which
$$\|c\|_{\ell_s^{p}(\Lambda)} = \Big(\sum_{\lambda\in\Lambda}  \big| (\|\lambda\|^s_\infty+1)\, c_\lambda\big|^p \Big)^{1/p}<\infty.$$
A time-frequency shift by $\lambda=(t,\nu)\in\Lambda$ is denoted by $\pi(\lambda)=\mathcal M_\nu \mathcal T_t$ and in the following we will consider Gabor systems of the form $(g,\Lambda)=  \{\pi(\lambda) g  \}_{\lambda\in \Lambda}$.

Among the many equivalent definitions of modulation spaces, we choose the following.  Let $g_{0}(x)=e^{-\|x\|}$,  $1\leq p\leq \infty$ and $s\in\mathbb R$. Then
\begin{align}
 M_s^{p}(\R^d)=\{f\in \mathcal S'(\R^{d}):\  \|f\|_{M_s^{p}(\R^d)}= \| (\langle f,\pi(\lambda)g_{0}\rangle)_{\lambda}\|_{\ell_s^{p}(\frac 1 2 \Z^{2d}  )} <\infty\,\},  \label{eqn:modulationspace}
\end{align}
where we generally omit the subscript $s=0$. For details on modulations paces, see, for example, \cite{Groechenig01,Feichtinger89}. In the following we shall use the fact that whenever $(g,\Lambda)$ is a tight $L^{2}$-Gabor frame (see below for a precise definition) with $g\in M^{1}(\R^{d})$ then replacing the $L^{2}$-Gabor frame $(g_{0},\frac 1 2 \Z^{2d})$  in \eqref{eqn:modulationspace} with  $(g,\Lambda)$ leads to an equivalent norm on $M^{p}(\R^d)$ \cite{Groechenig01}. That is, there exist positive constants $A$ and $B$ with 
\begin{equation}\label{eqn:mpFrame}
 A\|f\|_{M^{p}(\R^{d})}^p\leq \sum_{\lambda\in\Lambda} |\langle f,\pi(\lambda)g \rangle|^p \leq B\|f\|_{M^{p}(\R^{d})}^p,\quad f\in M^p(\R^d)
\end{equation}
if $1\leq p< \infty$ and
$$ A\|f\|_{M^{\infty}(\R^{d})} \leq \sup_{\lambda\in\Lambda} |\langle f,\pi(\lambda)g \rangle| \leq B\|f\|_{M^{\infty}(\R^{d})},\quad f\in M^\infty(\R^d)$$
 if $p=\infty$. 
In either case, we call $(g,\Lambda)$ an $\ell^p$-frame with lower frame bound A and upper frame bound B. If we can choose $A=B$ in case of $p=2$ then we call $(g,\Lambda)$ a tight Gabor frame.


The norm equivalence \eqref{eqn:equivalentnorms} follows from the following result since $M^2(\R)=L^2(\R)$.

\begin{theorem} \label{thm:normequiv} Let $1\leq p \leq \infty$ and $M$ compact. Then there exist positiv constants $A=A(M,p)$ and $B=B(M,p)$ with
\begin{align*}
 A \, \|\sigma_H\|_{L^\infty(\R^2)}\leq \|H\|_{\mathcal L(M^{p}(\R))}\leq B\, \|\sigma_H\|_{L^\infty(\R^2)}, \quad H\in OPW (M).
\end{align*}
\end{theorem}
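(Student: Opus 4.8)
The plan is to establish the two inequalities by passing through the time-frequency representation of pseudodifferential operators with bandlimited symbols and exploiting the fact that the bandlimitation of $\sigma_H$ to the fixed compact set $M$ forces the spreading function $\eta_H = \mathcal F_s^{-1}\sigma_H$ to live in a fixed compact set as well. First I would recall that an operator $H_\sigma$ acts as a superposition of time-frequency shifts weighted by $\eta_H$, namely $H_\sigma = \iint \eta_H(t,\gamma)\,\mathcal M_\gamma \mathcal T_t\, dt\,d\gamma$ (equation \eqref{eqn:spreadingrepresentation}), and that for $g\in M^1(\R)$ the mapping properties of a single time-frequency shift $\pi(\lambda)$ on $M^p(\R)$ are uniformly bounded (indeed isometric up to the modulation-space norm, since $\|\pi(\lambda)f\|_{M^p}=\|f\|_{M^p}$). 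Then the operator norm of $H$ on $M^p(\R)$ is controlled by an integral of $|\eta_H|$ against a fixed weight, i.e.\ by $\|\eta_H\|_{L^1(M)}$, which by Cauchy--Schwarz and compactness of $M$ is in turn controlled by $\|\eta_H\|_{L^2}$ or, more to the point, by $\|\widehat{\eta_H}\|_{L^\infty} = \|\sigma_H\|_{L^\infty}$ after invoking a Bernstein-type / Nikolskii inequality on the fixed band $M$. This gives the upper bound $\|H\|_{\mathcal L(M^p)} \le B(M,p)\,\|\sigma_H\|_{L^\infty}$.

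For the lower bound, I would test the operator against a well-chosen probe function. The idea is that $\sigma_H$ being bandlimited to $M$ means $\sigma_H$ is smooth with derivatives controlled (again Bernstein), so $\|\sigma_H\|_{L^\infty}$ is essentially attained and is comparable to a local average. Concretely, fix a point $(x_0,\xi_0)$ with $|\sigma_H(x_0,\xi_0)|$ close to $\|\sigma_H\|_{L^\infty}$; choose a normalized bump $\phi_{x_0,\xi_0}$ in $M^p$ (a suitably translated/modulated fixed Schwartz window, whose $M^p$ norm is independent of $(x_0,\xi_0)$) such that $\widehat{\phi}$ is concentrated near $\xi_0$ and $\phi$ is concentrated near... — more precisely, using the Kohn-Nirenberg representation \eqref{eqn:knrepresentation}, take $f$ with $\widehat f$ a narrow bump at $\xi_0$ multiplied by $e^{2\pi i x_0 \xi}$ so that $Hf(x_0)\approx \sigma_H(x_0,\xi_0)\widehat{?}$; then $\|Hf\|_{M^p}\gtrsim |Hf(x_0)|$ (up to the $M^p\hookrightarrow$ pointwise-type estimates available for bandlimited-symbol outputs) $\gtrsim c(M,p)\|\sigma_H\|_{L^\infty}\|f\|_{M^p}$. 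The key structural fact making this work is that $Hf$ inherits a bandlimitation/localization from $H\in OPW(M)$, so all the relevant norm comparisons are uniform over $OPW(M)$ with constants depending only on $M$ and $p$. An alternative, cleaner route for the lower bound is duality: $\|H\|_{\mathcal L(M^p)} \ge \sup |\langle Hf, g\rangle|/(\|f\|_{M^p}\|g\|_{M^{p'}})$, and choosing $f,g$ to be matched time-frequency shifts of a fixed window localizes $\langle Hf,g\rangle$ around a single value of $\boldsymbol\sigma_H$ (or of the STFT of $\eta_H$), which one then optimizes; the norm equivalence \eqref{eqn:iso} lets one pass freely between $\sigma_H$ and $\boldsymbol\sigma_H$.

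The main obstacle I anticipate is making the localization arguments uniform and clean: one must show that a single fixed Schwartz window, translated and modulated to an arbitrary point of the time-frequency plane, produces test functions whose $M^p$ norms are exactly controlled (this is just the translation/modulation invariance of $M^p$, so it is fine) AND that the output $Hf$ is sufficiently well localized that its $M^p$ norm is comparable to the single "diagonal" value of the symbol one wants to read off — and crucially that the error terms (the off-diagonal contributions, coming from the spread of $\eta_H$ over $M$ and from the non-compactness of the window in the other variable) can be absorbed with constants depending only on $M$ and $p$. This is where the compactness of $M$ and Bernstein-type estimates do the real work: they convert the qualitative statement "$\sigma_H$ is bandlimited" into the quantitative statement "$\sigma_H$ varies on a scale bounded below in terms of $M$," so that a probe of fixed width suffices. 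The $p=\infty$ case needs the $\sup$-version of \eqref{eqn:mpFrame} rather than the $\ell^p$-sum, but the argument is parallel. I would also remark that the case $p=2$ recovers \eqref{eqn:equivalentnorms} and that $\|H\|_{\mathcal L(M^p)}=\|H^\ast\|_{\mathcal L(M^{p'})}$ together with $\sigma_{H^\ast}=\overline{\boldsymbol\sigma_H}$ and \eqref{eqn:iso} shows the constants can be taken symmetric in a natural way.
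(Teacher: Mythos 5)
Your upper bound argument has a genuine gap. You propose to control $\|H\|_{\mathcal L(M^p(\R))}$ by $\|\eta_H\|_{L^1(M)}$ via the superposition $H=\iint\eta_H(t,\gamma)\,\mathcal M_\gamma\mathcal T_t\,dt\,d\gamma$, and then to dominate $\|\eta_H\|_{L^1(M)}$ by $\|\sigma_H\|_{L^\infty}$. Neither step survives scrutiny. First, for $H\in OPW(M)$ the spreading function $\eta_H=\mathcal F_s^{-1}\sigma_H$ is merely a compactly supported tempered distribution: the identity operator has $\sigma_H\equiv 1$ and $\eta_H=\delta_0$, so $\|\eta_H\|_{L^1}$ is not even defined on the class you are estimating. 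Second, even when $\eta_H$ is a nice function, the inequality $\|\eta_H\|_{L^1(M)}\lesssim\|\sigma_H\|_{L^\infty}$ is false: take $\eta_H(t,\gamma)=e^{2\pi i N t\gamma}\chi_{[0,1]^2}(t,\gamma)$, for which $\|\eta_H\|_{L^1}=1$ while the stationary-phase decay of the (symplectic) Fourier transform gives $\|\sigma_H\|_{L^\infty}\to 0$ as $N\to\infty$. The Cauchy--Schwarz/Nikolskii reductions you invoke also point the wrong way ($\sigma_H$ is bandlimited and bounded but generally not in $L^2(\R^2)$, e.g.\ $\sigma_H\equiv 1$). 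The correct mechanism, which the paper simply cites (Theorem~2.7 of \cite{Pfa10}), is that a bounded symbol bandlimited to a fixed compact set lies in the Sj\"ostrand class $M^{\infty,1}(\R^2)$ with norm $\lesssim\|\sigma_H\|_{L^\infty}$, and such pseudodifferential operators are bounded on every $M^p(\R)$; this works at the level of the symbol and never requires $\eta_H$ to be a function.

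Your lower bound is closer to the mark, and your ``alternative, cleaner route'' via $\sup|\langle H\pi(\lambda)g,\pi(\mu)g\rangle|$ is essentially what the paper does, though the paper packages it as a chain of cited norm equivalences rather than a hands-on probe construction: $\|\sigma_H\|_{L^\infty}\asymp\|\sigma_H\|_{M^\infty}\asymp\|h_H\|_{M^\infty}\asymp\|\kappa_H\|_{M^\infty}\asymp\|H\|_{\mathcal L(M^1(\R),M^\infty(\R))}\lesssim\|H\|_{\mathcal L(M^p(\R))}$, where the first step uses the equivalence of $M^\infty$ and $L^\infty$ norms on functions bandlimited to a fixed $M$, the middle steps use Fourier invariance of $M^\infty$ and the Feichtinger--Gr\"ochenig kernel theorem, and the last uses the embeddings $M^1\hookrightarrow M^p\hookrightarrow M^\infty$. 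Your ``read off a single diagonal value of $\sigma_H$'' variant would additionally have to handle the fact that $\langle Hf,g\rangle=\langle\sigma_H,R(g,f)\rangle$ smooths the symbol against a Rihaczek-type kernel that is not nonnegative, so the Bernstein argument you gesture at would need to be carried out in detail; the cited $M^\infty$-kernel equivalence avoids this entirely. In short: the lower bound sketch is repairable and morally the same as the paper's, but the upper bound as proposed would fail and needs to be replaced by the Sj\"ostrand-class (or equivalent) argument.
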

\begin{proof}
 Theorem~2.7 in \cite{Pfa10} (see for example the proof of Theorem~3.3 in \cite{Pfa10}) provides $C=C(M,p)$ with
\begin{align*}
  \|Hf\|_{M^{p}(\R)}\leq C\, \|\sigma_H\|_{L^\infty(\R^2)}\, \|f\|_{M^{p}(\R)} 
\end{align*}
for all $H\in OPW (M)$. This establishes the existence of $B=B(M,p)$ above.

In addition, we shall use the following facts. 
In \cite{FG92,Groechenig01} it is shown that the operator norm of an operator mapping the modulation space $M^1(\R)$ into its dual $M^\infty(\R)$ is equivalent to the $M^\infty(\R^2)$ norm of its kernel $\kappa$, which can easily shown to be equivalent to the $M^\infty(\R^2)$ norm of the time-varying impulse response $h$. Moreover, we use the fact that $M^\infty(\R^2)$ is invariant under Fourier transforms (in some or all variables) and that the $M^\infty(\R^2)$ norm can be replaced by the $L^\infty(\R^2)$ norm if we restrict ourselves to functions bandlimited to a fixed  set $M$ \cite{Oko09,Pfa10}. Last but not least, we use that the identity map embedding $M^p(\R)$ into $M^q(\R)$, $p\leq q$, is bounded.

Writing $\lesssim$ to express that $A\leq C B$ for some constant $C$ depending only on the support $M$ and $A\asymp B$ to denote equivalence in norms, i.e., $A\lesssim B$ and $B\lesssim A$, we obtain for all $H\in OPW(M)$
\begin{align*} 
 \|\sigma_H\|_{L^\infty(\R^2)} 
\asymp \|\sigma_H\|_{M^\infty(\R^2)}
 \asymp \|h_H\|_{M^\infty(\R^2)}
\asymp \|\kappa_H\|_{M^\infty(\R^2)}
 \asymp \|H\|_{\mathcal L (M^1(\R),M^\infty(\R))}  
\lesssim  \|H\|_{\mathcal L (M^p(\R))}
\end{align*}
and the result follows.
\end{proof}

We proceed to prove the following generalization of Theorem~\ref{thm:localization1}. Indeed, the earlier stated result follows again from the fact that $L^2(\R)=M^2(\R)$ and $g \in \mathcal S(\R)$ implies $g\in M_s^1(\R)$ for all $s\geq1$. We focus on the case of arbitrary domains; a simpler proof for rectangular domains can be obtained using  Theorem~\ref{thm:main-simple} instead of Theorem~\ref{thm:reconstruction}.

\begin{theorem}\label{prop:exp-general}
Fix $M$ compact and $p\in[1,\infty]$. Let $(g,\Lambda)$, $g\in M_s^1(\R)$, $s \geq 1$, be a tight frame for $L^2(\R)$ with frame bound $1$.
Then any $H\in OPW(M)$ with
  $$
    \|\sigma_H\|_{L^\infty(\R^2)}\leq \mu\quad\text{and}\quad \|\sigma_H\|_{L^\infty(S)}\leq \epsilon\,\mu,
  $$
  satisfies
  \begin{align*}
    \|Hf\|_{M^p(\R)}\leq 
    C\,\epsilon\,\mu\,  \|f\|_{M^p(\R)}
  \end{align*}
  for all $f\in M^p(\R)$  time-frequency localized on  $S-B\big(d(\epsilon)\big)=\Big(S^c+B\big(d(\epsilon)\big)\Big)^c$ in the sense that, for $p<\infty$,
  \begin{align*}
        \sum_{ \lambda \in \Lambda \cap \normalsize(S-B\normalsize(d(\epsilon)\normalsize)\normalsize) } |\langle f,\pi(\lambda)g\rangle|^p\geq (1-\epsilon^p
        )\,\sum_{\lambda\in\Lambda} |\langle f, \pi(\lambda)g\rangle|^p \, ,\end{align*}
         or, for $p=\infty$, 
        \begin{align*}
        \sup \big\{ |\langle f,\pi(\lambda)g\rangle|,\  \lambda \in \Lambda \cap \big(S-B(d(\epsilon))\big) \}  \geq (1-\epsilon
        )\sup \big\{ |\langle f,\pi(\lambda)g\rangle|,\  \lambda \in \Lambda\big\} \,.
        \end{align*}
Here $C>0$ is an absolute constant and $d:(0,1)\longrightarrow \R^+$ is a function independent of $S$  which satisfies $ d(\epsilon)=o(\epsilon^{-1/s})$ as $\epsilon\to 0$.
\end{theorem}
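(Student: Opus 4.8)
The plan is to reduce the statement to a combination of three ingredients: a global bound on $\|H\|$, a bound on how the Gabor coefficients of $Hf$ at a point $\lambda$ depend only ``locally'' (up to rapidly decaying tails) on the symbol near $\lambda$, and the localization hypothesis on $f$. First I would record the global estimate $\|Hf\|_{M^p}\le B\,\mu\,\|f\|_{M^p}$ from Theorem~\ref{thm:normequiv}; this controls the contribution of the ``bad'' coefficients of $f$, i.e.\ those indexed by $\lambda\notin S-B(d(\epsilon))$, which by the localization hypothesis carry at most an $\epsilon$-fraction (in the appropriate $\ell^p$ or $\ell^\infty$ sense) of the total mass of $f$. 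Split $f=f_{\mathrm{in}}+f_{\mathrm{out}}$ according to the Gabor expansion $f=\sum_\lambda \langle f,\pi(\lambda)g\rangle\,\pi(\lambda)g$ (using frame bound $1$), where $f_{\mathrm{in}}$ collects the indices $\lambda\in\Lambda\cap(S-B(d(\epsilon)))$. Then $\|f_{\mathrm{out}}\|_{M^p}\lesssim\|(\langle f,\pi(\lambda)g\rangle)_{\lambda\notin S-B(d(\epsilon))}\|_{\ell^p}\le\epsilon\,\|f\|_{M^p}$ by \eqref{eqn:mpFrame} applied in both directions (here we need $g\in M^1$, which holds since $g\in M^1_s$, $s\ge1$), so that term contributes $C\epsilon\mu\|f\|_{M^p}$ after applying the global bound to $Hf_{\mathrm{out}}$.

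The heart of the matter is to bound $\|Hf_{\mathrm{in}}\|_{M^p}$. Here I would expand $Hf_{\mathrm{in}}=\sum_{\lambda\in\Lambda\cap(S-B(d(\epsilon)))}\langle f,\pi(\lambda)g\rangle\,H\pi(\lambda)g$ and estimate $\|H\pi(\lambda)g\|_{M^p}$. The key is that $H\pi(\lambda)g = H\mathcal M_\nu\mathcal T_t g$ is, up to a unimodular factor, the operator $H_\lambda$ with $\sigma_{H_\lambda}(x,\xi)=\sigma_H(x,\xi)$ evaluated near the shifted point, acting on the fixed Schwartz window $g$; more precisely one uses the covariance of the Kohn--Nirenberg calculus under time-frequency shifts to write $\pi(\lambda)^{-1}H\pi(\lambda)$ as an operator in $OPW(M)$ whose symbol is a translate of $\sigma_H$. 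Since $\lambda\in S-B(d(\epsilon))$ means the ball $B(d(\epsilon))+\lambda\subseteq S$, on that ball the symbol is bounded by $\epsilon\mu$. One then splits the symbol of $\pi(\lambda)^{-1}H\pi(\lambda)$ into the part supported (essentially) in $B(d(\epsilon))$ around the origin, whose $L^\infty$ norm is $\le\epsilon\mu$, and a remainder supported outside that ball, whose $M^p\to M^p$ operator norm, applied to the Schwartz function $g$, produces a quantity decaying faster than any power of $1/d(\epsilon)$ — this is where $g\in M^1_s$ and the rapid decay of $V_g g$ enter, quantitatively giving a tail of order $d(\epsilon)^{-s}$ times the $\|\sigma_H\|_{L^\infty}\le\mu$ bound. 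Choosing $d(\epsilon)$ to grow slowly enough that $d(\epsilon)^{-s}=o(\epsilon)$ while still $d(\epsilon)\to\infty$ — which is exactly the requirement $d(\epsilon)=o(\epsilon^{-1/s})$ — makes both pieces $O(\epsilon\mu)$. Summing over $\lambda$ and using \eqref{eqn:mpFrame} once more (the Gabor frame is also an $\ell^p$-frame on $M^p$) converts $\sum_\lambda|\langle f,\pi(\lambda)g\rangle|\cdot(\text{uniform }M^p\text{ bound on }H\pi(\lambda)g)$ into $C\epsilon\mu\|f\|_{M^p}$; for $p=\infty$ one argues analogously with suprema.

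I expect the main obstacle to be the second ingredient: making precise, with uniform (in $\lambda$) constants, the statement ``$H\pi(\lambda)g$ has $M^p$ norm at most $(\epsilon\mu + \mu\,d(\epsilon)^{-s})\|g\|$''. This requires (i) the exact covariance identity for the Kohn--Nirenberg symbol under conjugation by $\pi(\lambda)$ — a translate of $\boldsymbol\sigma_H$, whence the transition $\sigma\leftrightarrow\boldsymbol\sigma$ via \eqref{eqn:iso} must be invoked so that the hypothesis on $\sigma_H$ over $S$ transfers correctly; (ii) a quantitative version of Theorem~\ref{thm:normequiv}'s proof that controls the $M^p\to M^p$ norm of an $OPW$-operator by the $L^\infty$ norm of its symbol on a ball plus a rapidly decaying contribution from the symbol outside — effectively a localized form of the Calderón--Vaillancourt / Gröchenig-type bound, obtained by convolving $\sigma_H$ against a smooth bump adapted to $B(d(\epsilon))$ and estimating the two resulting operators separately; and (iii) checking that all the implied constants depend only on $M$, $p$, and the fixed frame $(g,\Lambda)$, not on $S$ or $\lambda$, so that the final $C$ is absolute in the stated sense. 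The bookkeeping of which constant depends on what, together with verifying that the tail decay rate is genuinely $d(\epsilon)^{-s}$ (governed by $g\in M^1_s$), is the technical crux; everything else is a matter of assembling the frame inequalities.
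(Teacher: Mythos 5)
Your treatment of $f_{\rm out}$ (global bound from Theorem~\ref{thm:normequiv} plus the localization hypothesis) coincides with the paper's Step~4, and your near/far philosophy for the main term --- balance an $\epsilon\mu$ contribution from the symbol on $S$ against a $\mu\,d(\epsilon)^{-s}$ tail governed by $g\in M_s^1$ --- is exactly the trade-off that fixes $d(\epsilon)=(C/\epsilon)^{1/s}$ in the paper. However, there is a genuine gap in how you assemble the main term. You propose to bound $\|Hf_{\rm in}\|_{M^p}$ by $\sum_{\lambda}|\langle f,\pi(\lambda)g\rangle|\cdot\sup_{\lambda}\|H\pi(\lambda)g\|_{M^p}$ and then ``use \eqref{eqn:mpFrame} once more'' to convert this into $C\epsilon\mu\|f\|_{M^p}$. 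This step fails for $p>1$ (and a fortiori for $p=\infty$): the synthesis bound \eqref{eqn:pframeproperty2} applies to the frame elements $\pi(\lambda)g$ themselves, not to the family $\{H\pi(\lambda)g\}_{\lambda}$, and the $\ell^1$ sum of the coefficients is not controlled by their $\ell^p$ norm over an infinite index set. A uniform per-$\lambda$ bound on $\|H\pi(\lambda)g\|_{M^p}$ is simply not enough; one needs an almost-orthogonality (Schur-test) estimate on the matrix $\big(\langle H\pi(\lambda)g,\pi(\widetilde\lambda)g\rangle\big)_{\lambda,\widetilde\lambda}$ with summable off-diagonal decay, paired against test sequences in $\ell^q$, $1/p+1/q=1$.

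The paper supplies exactly this missing ingredient, but organizes the argument differently: instead of conjugating $H$ by $\pi(\lambda)$, it expands the operator itself over a refined lattice $\widetilde\Lambda\supseteq\Lambda$ via an operator sampling formula $H=\sum_{\nu\in\widetilde\Lambda}\sigma_H(\nu)\,\pi(\nu)P\pi(\nu)^{\ast}$ (see \eqref{eqn:Hdecomp}), where $P$ is a fixed prototype operator with Schwartz symbol. This linearizes the dependence on the symbol samples and yields a three-term splitting $Hf=H_{\rm in}f_{\rm in}+H_{\rm out}f_{\rm in}+Hf_{\rm out}$. The term $H_{\rm in}f_{\rm in}$ is small because every sample $\sigma_H(\nu)$, $\nu\in\widetilde\Lambda\cap S$, is at most $\epsilon\mu$, so the synthesized symbol has $L^\infty$ norm $\lesssim\epsilon\mu$ and Theorem~\ref{thm:normequiv} applies; the cross term $H_{\rm out}f_{\rm in}$ is controlled by the $\ell_s^1$ membership of $\big(\langle P\pi(\lambda)\widetilde g,\pi(\widetilde\lambda)\widetilde g\rangle\big)$ together with a Hölder/Young-type convolution estimate in the lattice variable --- this is the rigorous form of your ``far part of the symbol applied to $g$'' heuristic, and it is precisely what makes the argument close in $\ell^p$ for all $p\in[1,\infty]$. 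Your covariance-based route could in principle be repaired along the same lines, but you would end up re-proving this matrix estimate; as written, the plan only closes for $p=1$.
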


\begin{proof}

\noindent  {\it Step 1. Preliminary observations and choice of auxiliary objects.}
\qquad
Choose a nonnegative $\phi\in \mathcal S(\R^2)$ with
$\int\phi(x)\,dx=1$ and $\supp \phi\subseteq{[-\frac 1 2, \frac 1 2]^2}$. Recall that 
$$
	\Lambda^{\perp}=\{ \mu\in\R^{2}:\ \ e^{2\pi i \langle \mu, \lambda\rangle}=1 \text{ for all } \lambda\in\Lambda \}
$$ 
is called dual  lattice of the lattice $\Lambda$ in $\R^{2}$.
Let $\widetilde \Lambda$ be a lattice containing $\Lambda$ with the property that there exists a compact and convex fundamental domain $D$ of $\widetilde \Lambda^\perp$ which contains $M+[-\frac 1 2, \frac 1 2]^2$.  Set
$$
    \sigma_P = \|\chi_{D}\ast \phi\|_{L^2(\R^2)}^{-1} \  \mathcal F (\chi_{D}\ast \phi)
$$
and, using the sampling theorem for lattices in $\R^n$ \cite{PM62,Groechenig01},  we obtain for all $H\in OPW(M)$
$$
   \sigma_H= \sum_{\lambda\in \widetilde\Lambda}\sigma_H(\lambda)\ \mathcal T_\lambda \sigma_P
$$
and hence
\begin{equation}
 H=\sum_{\lambda\in\widetilde\Lambda}    \sigma_H(\lambda)  \,  \pi(\lambda)P\pi(\lambda)^\ast. \label{eqn:Hdecomp}
\end{equation}

 As explained above, the fact that  $(g,\Lambda)$ is a Gabor frame in $L^2(\R)$ with $g\in M^1(\R)$, implies that it is also an
$\ell^p$-frame for $M^p(\R)$ and there exists  $C_1,C_2>0$ with
\begin{equation}\label{eqn:pframeproperty1}
     \|f\|_{M^p(\R)}\leq C_1\,\|\{\langle f, \pi(\lambda) g \rangle  \}_{ \lambda\in \Lambda}
    \|_{\ell^p(\Lambda)} \leq C_1C_2\,\|f\|_{M^p(\R)},\quad f\in M^p(\R).
\end{equation}
As the synthesis map is the adjoint of the analysis map, we also have 
\begin{equation}\label{eqn:pframeproperty2}
 \big\|\sum_{\lambda \in \Lambda} c_\lambda \pi(\lambda) g \big\|_{M^p(\R)}\leq C_2\,\|\{c_\lambda  \}_{ \lambda\in \Lambda}
    \|_{\ell^p(\Lambda)}.
    \end{equation}  
    
Since $\Lambda$ is a subgroup of $\widetilde \Lambda$, we have $\widetilde \Lambda=\bigcup_{\ell=1}^n (\Lambda+\mu_\ell)$ for some $\mu_1,\mu_2,\ldots,\mu_n$. Here $n$ is finite, as otherwise the set would be dense, hence not a discrete lattice, and depends only  on $M$ and $(g,\Lambda)$. It is easily seen that  $(g,\Lambda+\mu_\ell)$,  $\ell=1,\ldots,n$, also satisfies \eqref{eqn:pframeproperty1} and \eqref{eqn:pframeproperty2}.  Setting $\widetilde g=  n^{-1/2}\, g\in M_s^1(\R)$, we conclude that the Gabor system $(\widetilde g,\widetilde\Lambda)$ is a tight frame for $L^2(\R)$ with frame bounds equal 1 and an $\ell^p$-frame with for $M^p(\R)$ with
  \begin{align*}
    \|f\|_{M^p(\R)}&\leq C_1\, n^{\frac 1 2 - \frac 1 p}\,\|\{\langle f, \pi(\widetilde\lambda)\widetilde g \rangle  \}_{\widetilde \lambda\in \widetilde\Lambda}
    \|_{\ell^p(\widetilde \Lambda)}\\ &\leq C_1  \, n^{\frac 1 2 - \frac 1 p}\, C_2\, n^{\frac 1 p - \frac 1 2}\,\|f\|_{M^p(\R)}= C_1C_2\, \|f\|_{M^p(\R)},\quad f\in M^p(\R).
  \end{align*}
  
We claim that
  \begin{align}\label{eqn:ell1membership}
    \Big\{
        \langle P\pi(\lambda) \widetilde g, \pi(\widetilde\lambda) \widetilde g\rangle
    \Big\}\in \ell_s^1(\widetilde \Lambda\times\widetilde \Lambda)\,.
 \end{align}
To see this, recall that $\sigma_P\in \mathcal S(\R^2)\subseteq M_s^1(\R^2)$, and, hence, $\widetilde \sigma_P$ given by $\sigma_P(x,\xi)\, e^{2\pi i x \xi}$ is in $M^1_s(\R^2)$ as $e^{2\pi i x \xi}$ is a Fourier multiplier and hence also a time multiplier for $M_s^1(\R^2)$ (Lemma 2.1 in \cite{grhe99}, related results can be found in \cite{BGOR07, toft04, toft04b, toft07}). A direct computation implies that for $\lambda=(t,\nu)$ and $\widetilde \lambda=(\widetilde t,\widetilde \nu)$ we have
\begin{align*}
   |\langle P\pi(t,\nu)\widetilde g, \pi(\widetilde t,\widetilde \nu) \widetilde g \rangle |
    &= \Big|\iint \sigma_P(x,\xi)\, e^{2\pi i x \xi}\, \widehat{\mathcal M_\nu \mathcal T_t \widetilde g}(\xi) \overline{\mathcal M_{\widetilde \nu} \mathcal T_{\widetilde t} \widetilde g(x)}\, d\xi\,dx\Big|\\
    &= \Big|\iint \sigma_P(x,\xi)\, e^{2\pi i x \xi}\, \mathcal M_{-t} \mathcal T_\nu \widehat{\widetilde g}(\xi) \overline{\mathcal M_{\widetilde \nu} \mathcal T_{\widetilde t} \widetilde g(x)}\, d\xi\,dx\Big|\\
    &= \big|\langle  \widetilde{\sigma}_P , \mathcal M_{(\widetilde \nu, t)} \mathcal T_{(\widetilde t, \nu)} \widetilde g{\otimes} \overline{\widehat{\widetilde g} } \rangle \big|.
\end{align*}
Equation \eqref{eqn:mpFrame} implies that the right hand side defines an $\ell_s^1(\widetilde \Lambda \times \widetilde \Lambda)$ sequence since $\widetilde{\sigma_P}\in  M_s^{1}(\R^{2})$ and $(g{\otimes}\overline{ \widehat {\widetilde g}},\, \widetilde \Lambda\times  \widetilde \Lambda)$ is a Gabor frame with window $\widetilde g{\otimes}\overline{ \widehat {\widetilde g}}$ in $M_s^{1}(\R^{2})$. Hence, \eqref{eqn:ell1membership} holds and   for $\{S_k\}_{k\in\N_0}$ defined by 
 $$S_k=
  \sum_{\|(\lambda,\widetilde\lambda)\|_\infty=k} |
        \langle P\pi(\lambda) \widetilde g, \pi(\widetilde\lambda) \widetilde g\rangle|,
   $$
we have $ \{ (k+1)^s S_k\} \in \ell^1(\N)$. That is, $\{S_k\}=o(k^{-(s+1)})$ and for some $C>0$ we have $\sum_{k=K}^\infty S_k \leq  C\ K^{-s}$, $K\in\N$.

For $\epsilon>0$ set 
 $d(\epsilon)= (C/ \epsilon)^{1/s} $ and observe that then 
$$ 
    \sum_{\widetilde \lambda \in\widetilde\Lambda}\ \sum_{\lambda\in\widetilde\Lambda\cap B(d(\epsilon))^c}   \Big| \langle P\pi(\lambda)\widetilde g, \pi(\widetilde\lambda)\widetilde g\rangle\Big| \leq   \sum_{k= d(\epsilon)}^\infty S_k  \leq C \big( (C/\epsilon)^{1/s}\big)^{-s}= \epsilon.
$$
Now, set $A(\lambda,\widetilde \lambda)=\langle P\pi(\lambda) \widetilde g, \pi(\widetilde\lambda)\widetilde g\rangle$ if   $\lambda \in \widetilde \Lambda\cap B(d(\epsilon))^c$ and $0$ else.

\vspace{.5cm}
\noindent  {\it Step 2. Decomposing $Hf$ as $Hf=H_{\rm in}f_{\rm in}+H_{\rm out}f_{\rm in}+Hf_{\rm out}$.}\qquad We set
$$\Lambda_{\rm in}=\Lambda\cap \big(S-B(d(\epsilon))\big),\quad \widetilde\Lambda_{\rm in}=\widetilde \Lambda\cap  \big(S-B(d(\epsilon))\big),
\quad \Lambda_{\rm out}=\Lambda \setminus \Lambda_{\rm in}, \quad \widetilde{\Lambda}_{\rm out}=\widetilde \Lambda \setminus \widetilde{\Lambda}_{\rm in}.$$
and
$$f_{\rm in}=\sum_{\lambda\in \Lambda_{\rm in}} \langle f, \pi(\lambda) g\rangle\,  \pi(\lambda) g
=\sum_{\lambda\in \widetilde{\Lambda}_{\rm in}} c_\lambda\,  \pi(\lambda)\widetilde g, \quad f_{\rm out}=f-f_{\rm in},$$
where $c_\lambda= \sqrt{n}\, \langle f, \pi(\lambda) g\rangle$ if $\lambda\in\Lambda$ and $0$ else.
Similarly, inspired by \eqref{eqn:Hdecomp}, we set for  $H\in OPW(M)$
$$ H_{\rm in}=\sum_{\lambda\in\widetilde \Lambda \cap S} \sigma_H(\lambda) \, \pi(\lambda)P\pi(\lambda)^\ast, \quad
H_{\rm out}=H-H_{\rm in}\,
$$
and note that $H_{\rm in},H_{\rm out}\in OPW(D+[-\tfrac 1 2, \tfrac 1 2]^2)$.

\vspace{.5cm}
\noindent {\it Step 3. Bounding $\|H_{\rm out} f_{\rm in}\|_{M^{p}(\R)}$.}\qquad
We use the separation of $\widetilde{\Lambda}_{\rm in}$ and $\widetilde{\Lambda}\cap S^c$ by $d(\epsilon)$ to compute
\begin{align*}
  |\langle H_{\rm out}f_{\rm in}, \pi(\widetilde\lambda)\widetilde g \rangle|
    &=\Big|\langle \sum_{\nu\in  \widetilde{\Lambda}\cap S^c}\sigma_H(\nu)  \pi(\nu) P \pi(\nu)^\ast\sum_{\lambda\in \widetilde{\Lambda}_{\rm in}}c_\lambda  \pi(\lambda) \widetilde g ,  \,  \pi(\widetilde \lambda )\widetilde g\rangle  \Big| \\
   &\leq \sum_{\nu\in  \widetilde{\Lambda}\cap S^c}|\sigma_H(\nu)|\,\sum_{\lambda\in \widetilde{\Lambda}_{\rm in}}|c_\lambda|\, \Big|\langle  \pi(\nu) P \pi(\nu)^\ast \pi(\lambda)  \widetilde g, \, \pi(\widetilde\lambda) \widetilde g \rangle \Big| \\
    &\leq \sum_{\nu\in  \widetilde{\Lambda}\cap S^c}|\sigma_H(\nu)|\,\sum_{\lambda\in \widetilde{\Lambda}_{\rm in}}|c_\lambda|\, \Big|\langle  P \pi(\lambda-\nu) \widetilde  g, \, \pi(\widetilde\lambda-\nu) \widetilde g \rangle \Big|\\
    &\leq \sum_{\nu\in  \widetilde{\Lambda}\cap S^c}|\sigma_H(\nu)|\,\sum_{\lambda\in \widetilde{\Lambda}_{\rm in}}|c_\lambda|\, A(\lambda-\nu,\widetilde\lambda-\nu)\\
     &\leq \|\sigma_H\|_{L^\infty(\R^2)}\sum_{\nu\in \widetilde\Lambda}\,\sum_{\lambda\in \widetilde \Lambda}|c_\lambda|\, A(\lambda-\nu,\widetilde\lambda-\nu).
\end{align*}
For every sequence $\{ d_\lambda \}\in \ell^q(\widetilde \Lambda)$, $1/p+1/q=1$,  we conclude
\begin{align*}
 \big|\langle \{\langle H_{\rm out}f_{\rm in}, \pi(\widetilde\lambda)\widetilde g \rangle \}_{\widetilde \lambda\in \widetilde \Lambda}
   &,\{ d_{\widetilde \lambda}\}_{\widetilde \lambda\in \widetilde \Lambda} \rangle \big| \\ &\leq \|\sigma_H\|_{L^\infty(\R^2)}
    \sum_{\widetilde \lambda\in \widetilde \Lambda}\sum_{\nu\in \widetilde \Lambda}\,\sum_{\lambda\in \widetilde \Lambda}|c_\lambda|\, A(\lambda-\nu,\widetilde\lambda-\nu)\, |d_{\widetilde \lambda}|\\
    &=\|\sigma_H\|_{L^\infty(\R^2)}
    \sum_{ \lambda\in \widetilde \Lambda}\sum_{\widetilde\lambda\in \widetilde \Lambda} \sum_{\nu\in \widetilde \Lambda}\,|c_{\lambda+\nu}|\, A(\lambda,\widetilde \lambda )\, |d_{\widetilde \lambda+\nu}|\\
    &\leq\|\sigma_H\|_{L^\infty(\R^2)}\|\{c_\lambda\}\|_{\ell^p(\widetilde \Lambda)}\|\{d_\lambda\}\|_{\ell^q(\widetilde \Lambda)}
    \sum_{ \lambda\in \widetilde \Lambda}\sum_{\widetilde\lambda\in \widetilde \Lambda}  A(\lambda,\widetilde \lambda )\,
\end{align*}
and
 \begin{align}
  \notag
\|H_{\rm out}f_{\rm in}\|_{M^p(\R)}&\leq  n^{\frac 1 2 - \frac 1 p}\, C_1  \|\{|\langle H_{\rm out}f_{\rm in}, \pi(\widetilde\lambda)\widetilde g \rangle|\} \|_{\ell^p(\widetilde \Lambda)}\\ \notag
    &\leq n^{\frac 1 2 - \frac 1 p}\, C_1\|\sigma_H\|_{L^\infty(\R^2)}\|\{c_\lambda\}\|_{\ell^p(\widetilde \Lambda)}\sum_{ \lambda\in \widetilde \Lambda}\sum_{\widetilde\lambda\in \widetilde \Lambda}  A(\lambda,\widetilde \lambda )\notag
    \\
      &\leq n^{\frac 1 2 - \frac 1 p}\, C_1 \mu\, \|\{ n^{\frac 1 2}\, \langle f, \pi(\lambda)g\rangle \}\|_{\ell^p( \Lambda)} \,\epsilon \notag
  \notag \leq n^{1 - \frac 1 p}\, C_1\, C_2\,\epsilon  \,\mu\, \|f\|_{M^p(\R)}   .
\end{align}

\vspace{.5cm}
\noindent {\it Step 4. Bounding $\|H f_{\rm out}\|_{M^p(\R)}$.} \qquad
By Proposition~\ref{thm:normequiv} we have
$$
\|Hf_{\rm out}\|_{M^p(\R)}\leq B(M,p)\, \|\sigma_H\|_{L^\infty(\R^2)}\|f_{\rm out}\|_{M^p(\R)}.
$$
By hypothesis, for $p<\infty$ we have
\begin{align*}
 \|f_{\rm out}\|_{M^p(\R)}^p &= \|\sum_{\lambda\in\Lambda_{\rm out}} \langle f,\pi( \lambda) g\rangle\pi( \lambda) g\|_{M^p(\R)}^p 
    \leq C_2^p \sum_{\lambda\in\Lambda_{\rm out}}| \langle f,\pi( \lambda) g\rangle|^p \\
    &\leq C_2^p \, \epsilon^p \sum_{\lambda\in\Lambda}| \langle f,\pi( \lambda) g\rangle|^p 
    \qquad \quad\ \,\leq C_2^{2p} \, \epsilon^p \|f\|_{M^p(\R)}^p\,,
\end{align*}
and 
for $p=\infty$ we have
\begin{align*}
 \|f_{\rm out}\|_{M^\infty(\R)} &= \|\sum_{\lambda\in\Lambda_{\rm out}} \langle f,\pi( \lambda) g\rangle\pi( \lambda) g\|_{M^\infty(\R)} 
    \leq C_2\, \| \langle f,\pi( \lambda) g\rangle\|_{\ell^\infty(\Lambda_{\rm out})} \\
    &\leq C_2\, \epsilon \,\| \langle f,\pi( \lambda) g\rangle\|_{\ell^\infty(\Lambda)} 
    \qquad \quad\ \ \,\leq C_2^2\, \epsilon \,\|f\|_{M^\infty(\R)}\,.
\end{align*}
We conclude
$$
\|Hf_{\rm out}\|_{M^p(\R)}\leq B(M,p)\,C_2\,\epsilon\, \ \|\sigma_H\|_{L^\infty}\|f\|_{M^p(\R)}\leq B(M,p)\,C_2\,\epsilon\,  \mu\, \|f\|_{M^p(\R)}.
$$

\vspace{.5cm}
\noindent {\it Step 5. Bounding $\|H_{\rm in} f_{\rm in}\|_{M^p(\R)}$.}\qquad  Since $\sigma_P\in \mathcal S(\R^2)$, the operator 
$$ \ell^\infty(\Lambda)\rightarrow L^\infty(\R^2),\quad \{c_\lambda\} \mapsto 
\sum_{\lambda\in \widetilde\Lambda}c_\lambda\ \mathcal {\mathcal T}_\lambda \sigma_P$$ is bounded, say with operator norm bound $C_3$.
Then, Proposition~\ref{thm:normequiv} implies
\begin{align*}
  \|H_{\rm in} f_{\rm in} \|_{M^p(\R)}&\leq B(D{+}[-\tfrac 1 2, \tfrac 1 2]^2,p)\,  \|\sigma_{H_{\rm in}}\|_{L^\infty(\R^2)}\|f_{\rm in}\|_{M^p(\R)}\\
  &\leq  B(D{+}[-\tfrac 1 2, \tfrac 1 2]^2,p)\,  C_3\,  \|\{\sigma_{H}(\lambda)\} \|_{\ell^\infty(\widetilde \Lambda \cap S)} (1+\epsilon) \|f\|_{M^p(\R)} \\ &\leq  2\,B(D{+}[-\tfrac 1 2, \tfrac 1 2]^2,p)\, C_3\, \epsilon \, \mu\,   \|f\|_{M^p(\R)}.
\end{align*}

Since all constants are independent of $\epsilon$, $\mu$, $H$, and $f$, we summarize
\begin{align*}
\hspace{2.5cm}  \|Hf\|_{M^{p}(\R)}=\|H_{\rm in}f_{\rm in}+H_{\rm out}f_{\rm in}+Hf_{\rm out}\|_{M^p(\R)}\leq  C \epsilon\, \mu\,\|f\|_{M^{p}(\R)}\,. \hspace{2.5cm} \qedhere
\end{align*}
\end{proof}

\section{Operator identification using localized identifiers}\label{section:identifier}
This section analyzes identifiers that are localized in time and frequency. Theorem~\ref{thm:nodecay} shows that such functions cannot serve as an identifier for the entire Paley-Wiener.

{\it Proof of Theorem~\ref{thm:nodecay}.} \ 
Let $r\neq 0$ be a Schwartz function with $\supp r\subseteq [0,T]$ and $\phi \neq 0$ be a Schwartz function with $\supp \widehat \phi \subseteq [-\Omega/2,\Omega/2]$. Let $H_n$ be defined via its kernel $\kappa_n(x,y)=\phi(x-n)r(x-y)$, so $h_n(x,t)=\phi(x-n)r(t)$ and $\eta_n(t,\nu)=\int h_n(x,t)e^{-2\pi i x \nu}dx=r(t)e^{2\pi i n \nu}\widehat \phi(\nu)$, so $H_n\in OPW([0,T]{\times} [-\Omega/2,\Omega/2])$ with  
$\|\sigma_{H_n}\|_{L^\infty(\R^2)}= \|\widehat r\|_{L^\infty(\R)}\, \|\phi \|_{L^\infty(\R)}$. 

If $w$ identifies  $OPW([0,T]{\times} [-\Omega/2,\Omega/2])$, then 
by definition $H_nw\in L^2(\R)$. Then
\begin{align*}
 \int | H_n w(x)|^2\, dx  &=  \int |\langle \kappa_n(x,y), \ w(y) \rangle_{y}|^2\, dx
=  \int |  \phi(x-n) |^2 \,| \langle r(x-y), \, w(y)\,\rangle_{y} |^2\, dx.
\end{align*}
Clearly, $\langle r(x-y), \, w(y)\,\rangle_{y} \stackrel{x\to \pm \infty}{\longrightarrow} 0$ would imply 
$ \| H_n w\|_{L^2(\R)} \stackrel{n\to \pm \infty}{\longrightarrow} 0$ and contradict identifiability \eqref{eqn:identifable} since
by \eqref{eqn:equivalentnorms} we have
$ \|H_n\|_{\mathcal L(L^2(\R))}\geq A\|\sigma_{H_n}\|_{L^\infty(\R^2)}= A\|\widehat r\|_{L^\infty(\R)}\, \|\phi \|_{L^\infty(\R)}$ for all $n\in \Z$.

To show that an identifier $w$ cannot decay in frequency, we choose
$H_n\in OPW([0,T]{\times} [-\frac{\Omega}{2},\frac{\Omega}{2}])$ to have  spreading functions $\eta_n(t,\nu) =r(t)e^{2\pi i n t}\widehat\phi(\nu)e^{-2\pi i t \nu}$.  Let  $g$ be a Schwartz function and compute using Fubini's Theorem and, for notational simplicity, using bilinear pairings in place of sesquilinear ones, 
\begin{align*}
 \langle H_n w(x), g(x) \rangle_x   &=  \big\langle  \eta_n(t,\nu), \langle e^{2\pi i x \nu } w(x-t),\, g(x) \rangle_x \big\rangle_{t,\nu}\\
&=  \big\langle  r(t) e^{2\pi i n t}\widehat \phi(\nu), \langle e^{2\pi i (x-t) \nu } w(x-t),\, g(x) \rangle_x \big\rangle_{t,\nu}\\
&=  \big\langle  r(t) e^{2\pi i n t}\, w(x-t)\, g(x),\, \langle \widehat \phi(\nu), e^{2\pi i (x-t) \nu } \rangle_\nu   \big\rangle_{t,x}\\
&=  \big\langle \langle   r(t) e^{2\pi i n t}, \,   w(x-t)\,  \phi(x-t)\rangle_{t},\, g(x)\big\rangle_{x}\\
&=  \big\langle \langle  \widehat r(\xi -n), \, e^{-2\pi i x \xi }\  \widehat w\ast \widehat  \phi(\xi)\rangle_{\xi},\, g(x)\big\rangle_{x}
=  \big\langle   \widehat r(\xi -n)\,   \widehat w\ast \widehat  \phi(\xi),\, \widehat g(\xi)\big\rangle_{\xi}\,.
\end{align*}
Hence,
$$
\|H_nw\|^2_{L^2(\R)}
	=\|\widehat{H_n w}\|^2_{L^2(\R)} 
	= \int| \widehat r(\xi -n)|^2\,   |\langle \widehat w(\xi-\nu) ,\, \widehat  \phi(\nu)\rangle_\nu|^2\,d\xi \,,
$$
and we can conclude as above. \qed

We proceed by showing that local identification of operators is possible with identifiers localized both in time and frequency, Theorem~\ref{thm:trunc}.

{\it Proof of Theorem~\ref{thm:trunc}.} 
The proof proceeds in two steps. First we show that replacing each Dirac-delta by a suitable smoothed out version locally introduces only a small error and identification using the resulting smooth identifier can be interpreted as sampling a modified bandlimited operator. 
Second we show that reducing to a finite number of samples also locally yields only a small error. Applying this to the modified operator arising in the first part proves that both reductions together also yield only a small error.

For the first part, choose $\varphi \in \mathcal S$ with $\supp \varphi\subseteq [-\delta, \delta]$, $\|\widehat \varphi\|_{L^\infty(\R)}=1$, and   $ |\widehat \varphi(\xi)-1|\leq \epsilon$ for $\xi\in I_2$.  Define $C_\varphi: f\mapsto f\ast \varphi$ and set $H_C=H\circ C_\varphi$. Observe that
\begin{align*}
 	H_Cf(x)&=\int \!\!\!\int \eta_H(t,\nu)e^{2\pi i x \nu} f\ast\varphi(x-t)\, dt\,d\nu
		\\&=\int \!\!\!\int \!\!\!\int \eta_H(t,\nu)e^{2\pi i x \nu} f(x-t-y) \varphi( y)\, dy\, dt\,d\nu
		\\&=\int \!\!\!\int   \!\!\!\int \eta_H(t-y,\nu)e^{2\pi i x \nu} f(x-t) \varphi( y)\, dy\, dt\,d\nu
		\\&=\int \!\!\!\int \Big(  \int \eta_H(t-y,\nu) \varphi( y)\, dy\Big)\, e^{2\pi i x \nu} f(x-t)dt\,d\nu\,,
\end{align*}
that is,  $\eta_{H_C}(t,\nu)=\eta_H(\cdot,\nu)\ast \varphi( t)$ and 
$$\supp \eta_{H_C}\subseteq \supp \eta_{H}+[-\delta,\delta]{\times}\{0\}.$$
We can apply Theorem~\ref{thm:reconstruction} for the operator $H_C$ with $M_1:=M+[-\delta,\delta]{\times}\{0\}$ in place of $M$. As by assumption $M_1+[-\delta, \delta]^2$ still has measure less than one, this can be done with $\delta$, $r$ and $\phi$ as given in the theorem. Defining $\ident_1:=\varphi*\ident$, we obtain
\begin{equation}
 \kappa_ {H_C}(x+t,x) = LT  \sum_{j=0}^{L-1} r(t-k_jT)\Big( \sum_{q\in\Z} b_{jq} H\ident_1(t- (k_j-q)T) \phi(x+(k_j-q)T)\Big)\, e^{2\pi i n_j \Omega x}.\label{eqn:reconstructionformulaSmooth}
\end{equation}

Observe that
\begin{align*}
\sigma_{H_C}(x,\xi)=\mathcal F_s \eta_{H_C}(x,\xi)=\sigma_H(x,\xi)\,\widehat \varphi( \xi),
\end{align*}
and, by hypothesis, we have $\|\sigma_{H_C}\|_{L^\infty(\R^2)}\leq \|\sigma_H\|_{L^\infty(\R^2)} \leq \mu$ and $\|\sigma_H-\sigma_{H_C}\|_{L^\infty(S)}\leq \epsilon \mu$.

Note that for $I_1=\R$, \eqref{eqn:reconstructionformulaSmooth} agrees with \eqref{eqn:reconstructionformulaTilde} and we have $H_C=\widetilde H$, so this establishes the result.

For the second part, let us assume $S\subseteq I_1 \times \R$ and $M_1\subset [c,d]\times\R$.  Let $\psi\in \mathcal S(\R)$ be nonnegative and  satisfy $\sum_{n} \psi(x-nT)=1$ and $\supp \widehat \psi\subset [-1/T,1/T]$. Such a function can be obtained by choosing an arbitrary bandlimited, nonnegative $\psi_0\in\S$  with $\|\psi_0\|_{L^1}=1$ and defining $\psi=\chi_{[0,T]}*\psi_0$.

Set $P_A(x)=\sum_{nT\in A}\psi (x-nT)$, so $P_{[-N,N]}\to 1$ and $P_{[-N,N]^c}\to 0$   uniformly on compact subsets as $N\to \infty$. Moreover, $|P_A(x)|\leq 1$ for all $A$. Choose $N(\epsilon)$ so that $|\, P_{I_1+[-N(\epsilon),N(\epsilon)]}(x)-1|\leq \epsilon$ for $x\in I_1+[c,d]$ and choose $R(\epsilon)$ with
\begin{align}
 \sum_{{qT\notin I_1+[-R(\epsilon),R(\epsilon)] }}   \| P_{[I_1+[-N(\epsilon),N(\epsilon)]}(x)\,  V_{\phi^\ast}r (x-q,\xi) \|_{L^1(\R^2)} < \epsilon (1-\epsilon) D. \label{eqn:D}
 \end{align}
 where the nature of   $D$ is derived  by the computations below. The existence of such $R(\epsilon)$ follows from the fact that  $P_{I_1+[-N(\epsilon),N(\epsilon)]}(x)$ and $\,  V_{\phi^\ast}r$ decay faster than any polynomial. Furthermore, as $\psi\in {\mathcal{S}}$, a similar argument to the one given in the proof of \ref{prop:exp-general} shows that for both $R(\epsilon)$ and $N(\epsilon)$, the growth rate is again bounded by $o(\sqrt[k]{1/\epsilon}$ for arbitrarily large $k\N$.

Let $w_2 =\sum_{kT\in  I_1+[-R(\epsilon),R(\epsilon)]+[-\delta, T+\delta]}c_k\delta_{kT}$ and observe that $\widetilde H$ as defined in the theorem satisfies 
\begin{align*}
 h_{\widetilde H}(x+t,t)&=\kappa_{\widetilde H}(x+t,x) \\
&= LT  \sum_{j=0}^{L-1} r(t-k_jT)\Big( \sum_{q\in\Z} b_{jq} H_C\ident_2(t- (k_j-q)T) \phi(x+(k_j-q)T)\Big)\, e^{2\pi i n_j \Omega x}. 
\label{eqn:reconstructionformulaTilde}
\end{align*}

Since $M_1\subset [c,d]\times\R$, we have $\supp H_C\delta_y \subseteq [c+y,d+y]$, and therefore, 
\begin{align*}
  H_Cw(x)=H_C\sum_{k\in\Z}c_k \delta_{kT}(x) \ = \ H_C\!\!\sum_{kT\in I_1+[-R(\epsilon),R(\epsilon)]+[-T-\delta, \delta]+[c,d]} \!\!& c_k \delta_{kT}(x) = H_C w_2(x),
  \\  x \in K\equiv  &I_1+[-R(\epsilon),R(\epsilon)]+[-T-\delta, \delta] \,.
\end{align*}

Note that $\widetilde H \in OPW(M_2)$, where $M_2=M_1+[-\delta,\delta]^2$ (for details, see, for example, \cite{PW12}).
As $M_2+[-\delta,\delta]^2$ still has measure less than one, this implies that we can apply Theorem~\ref{thm:reconstruction} again with the same $\delta$. We obtain 
\begin{align*}
 h_{ H_C}&(x+t,t)- h_{\widetilde H}(x+t,t)\\
=& LT  \sum_{j=0}^{L-1} r(t-k_jT)\Big( \sum_{q\in\Z} b_{jq} H_C\big(\ident -\ident_2\big)(t- (k_j-q)T) \, \phi(x+(k_j-q)T)\Big)\, e^{2\pi i n_j \Omega x} 
 \\
 =& LT  \sum_{j=0}^{L-1} r(t-k_jT)\Big( \!\!\sum_{qT\notin K -(t -k_j T) } \!\!b_{jq} H_C\big(\ident -\ident_2\big)(t- (k_j-q)T) \, \phi(x+(k_j-q)T)\Big)\, e^{2\pi i n_j \Omega x} 
 \\
 =& LT  \sum_{j=0}^{L-1} r(t-k_jT)\Big( \!\!\!\sum_{qT\notin I_1+[-R(\epsilon),R(\epsilon)] } \!\!\!b_{jq} H_C\big(\ident -\ident_2\big)(t- (k_j-q)T) \, \phi(x+(k_j-q)T)\Big)\, e^{2\pi i n_j \Omega x}. 
\end{align*}

Setting $\widetilde K = K^c +[-\delta,T+\delta]$  
and using that $(\sigma_{H_C}(x,\xi)-\sigma_{\widetilde H}(x,\xi))\, P_{I_1+[-N(\epsilon),N(\epsilon)]}(x)$ is bandlimited to $M+\{0\}{\times}[-1/T,1/T])$, we  compute
\begin{align*}
 \|\sigma_{H_C}&-\sigma_{\widetilde H}\|_{L^\infty(S)} 
 	\leq 
	1/(1-\epsilon)\, \|(\sigma_{H_C}(x,\xi)-\sigma_{\widetilde H}(x,\xi))\, P_{I_1+[-N(\epsilon),N(\epsilon)]}(x)\|_{L^\infty(\R^2)}
	\\ & \asymp 
	1/(1-\epsilon)\, 
		\|(\sigma_{H_C}(x,\xi)-\sigma_{\widetilde H}(x,\xi))\, P_{I_1+[-N(\epsilon),N(\epsilon)]}(x)\|_{M^\infty(\R^2)}
	\\ & \asymp 
		1/(1-\epsilon)\, \|(h_{H_C}(x,t)-h_{\widetilde H}(x,t))\, P_{I_1+[-N(\epsilon),N(\epsilon)]}(x)\|_{M^\infty(\R^2)}
	\\ & \asymp 
		1/(1-\epsilon)\, 
		\Big\|LT \ P_{  I_1+[-N(\epsilon),N(\epsilon)]}(x) \sum_{j=0}^{L-1} r(t-k_jT)\, e^{2\pi i n_j \Omega (x-t)}\, 
		\\& \hspace{1.5cm} \sum_{qT\notin I_1+[-R(\epsilon),R(\epsilon)]  } b_{jq} H_C(w- w_2)(t- (k_j-q)T) \, 
				\phi(x-t+(k_j-q)T) \Big\|_{M^\infty(\R^2)}
\\ & \leq 
		LT/(1-\epsilon)\, 
		\sum_{j=0}^{L-1}  \sum_{qT\notin I_1+[-R(\epsilon),R(\epsilon)]  } \Big\| \ P_{I_1+[-N(\epsilon),N(\epsilon)]}(x) \, r(t-k_jT)\, e^{2\pi i n_j \Omega (x-t)}\, 
		\\& \hspace{3.5cm} b_{jq} {H_C}(w- w_2)(t- (k_j-q)T) \, 
				\phi(x-t+(k_j-q)T) \Big\|_{M^\infty(\R^2)}
\\ & \leq 
		LT/(1-\epsilon)\, 
		\sum_{j=0}^{L-1}  \sum_{qT\notin I_1+[-R(\epsilon),R(\epsilon)]  }\Big\| {H_C}(w- w_2)(t- (k_j-q)T) \Big\|_{M^\infty(\R^2)}
\\ 
&\hspace{2cm} \Big\| \ P_{I_1+[-N(\epsilon),N(\epsilon)]}(x) \, r(t-k_jT)\, e^{2\pi i n_j \Omega (x-t)}\, 
		 b_{jq}  \, 
				\phi(x-t+(k_j-q)T) \Big\|_{M^1(\R^2)}\,   \\ & \leq 
		\| H_C\|_{{\mathcal L}(M^\infty(\R))} \, \|w- w_2\|_{M^\infty(\R)} \frac{LT}{1-\epsilon}\, 
		\\ 
&\hspace{2cm}\sum_{j=0}^{L-1}  \sum_{qT\notin I_1+[-R(\epsilon),R(\epsilon)] } |   b_{jq}   |\,  \Big\| P_{I_1+[-N(\epsilon),N(\epsilon)]}(x) \,   r(t)\, 
				\phi(x-t-qT) \Big\|_{M^1(\R^2)},
 \end{align*}
 where we used the invariance of the $M^\infty$ and $M^1$ norm under translation and modulation and, for the last inequality, Theorem~\ref{thm:normequiv} -- noting that, for functions constant in one of the coordinate directions, the $M^\infty(\R)$ and $M^{\infty}(\R^2)$ norms agree. 
The second to last inequality is based on $M^1(\R^2)$ being a Banach algebra, namely on  $\|g_1 g_2\|_{M^1(\R^2)}\leq \|g_1\|_{M^1(\R^2)}\|g_2\|_{M^1(\R^2)}$ for $g_1,g_2\in M^1(\R^2)$.  Indeed,  for $f\in M^\infty(\R^2)$ and $g\in M^1(\R^2)$, we have 
\begin{align*}
\|fg\|_{M^\infty(\R^2)} 
		&=	\sup_{\|\widetilde f\|_{M^1(\R^2)}=1} |\langle fg,{\widetilde f}\rangle| 
		= 	\sup_{\|\widetilde f\|_{M^1(\R^2)}=1}  |\langle f,{\widetilde f} \overline g \rangle |
		\leq 	\sup_{\|\widetilde f\|_{M^1(\R^2)}=1}  \|f\|_{M^\infty(\R)} \|{\widetilde f} \overline g \|_{M^1(\R^2)}\\
		&\leq 	\sup_{\|\widetilde f\|_{M^1(\R^2)}=1}  \|f\|_{M^\infty(\R)} \|{\widetilde f} \|_{M^1(\R^2)}  \|\overline g \|_{M^1(\R^2)}
	 	=	 \|f\|_{M^\infty(\R^2)} \| g \|_{M^1(\R^2)}\,.
\end{align*}
 
Note that with $\phi^\ast (t)=\overline {\phi (-t)}$, we have
\begin{align*}
 \int r(t) \phi(x-t) e^{-2\pi i t\xi}dt=V_{\phi^\ast}r (x,\xi),
\end{align*}
which is a bandlimited function since
\begin{align*}
 \iint V_{\phi^\ast}r (x,\xi)e^{2\pi i t\xi - x\nu} \, dx\, d\xi 
 				= \int r(t) \phi(x-t) e^{-2\pi i x\nu}dx
 				=r(t)\widehat{\varphi}(\nu)\,e^{-2\pi i t \nu}.  
\end{align*}
Using that  the $M^1$-norm is invariant under partial Fourier transforms and the equivalence between the $M^1$ and $L^1$ norms  which is  implied by the bandlimitation of  $P_{I_1+[-N(\epsilon),N(\epsilon)]}(x+q)\,  V_{\phi^\ast}r (x,\xi)$  to $(-1/T,1/T){\times}\{0\} +(-\delta,\Omega+\delta){\times}  (-\delta, T + \delta)$,
we obtain
\begin{align*}
  \Big\|  P_{I_1+[-N(\epsilon),N(\epsilon)]}(x+q)\,  r(t)\, 
				\phi(x-t) \Big\|_{M^1(\R^2)}
		& \asymp  \Big\| P_{I_1+[-N(\epsilon),N(\epsilon)]}(x+q)\,  V_{\phi^\ast}r (x,\xi) \Big\|_{M^1(\R^2)}\\
		& \asymp \Big\| P_{I_1+[-N(\epsilon),N(\epsilon)]}(x+q)\,  V_{\phi^\ast}r (x,\xi) \Big\|_{L^1(\R^2)} .
\end{align*}
Fix  $g \in \mathcal S (\R)$ and observe  that $\|V_g f\|_{L^p(\R^2)}$ defines a norm on $M^p(\R)$ equivalent to the $M^p(\R)$ norm given in \eqref{eqn:modulationspace} \cite{Groechenig01}. For  any $A\subset \R$  we obtain the uniform bound
\begin{align*}
 \| \sum_{nT\in A} c_n \delta_{nT} \|_{M^\infty(\R)} & \asymp \| V_g \sum_{nT\in A}c_n\delta_{nT}\|_{L^\infty(\R)} =\| \sum_{nT\in A}c_n g(nT-t)e^{2\pi i \nu nT}\|_{L^\infty(\R)}
 \\ & \leq  \| \sum_{nT\in A} |c_n|\, |g(nT-t)|\|_{L^\infty(\R)}  \leq  \| \sum_{n\in\Z} |c_n|\, |g(nT-t)|\|_{L^\infty(\R)}< \infty .
\end{align*}
The first norm inequality stems from the fact that for all $g\in M^1(\R)$, $\|V_g f\|_{L^p(\R^2)}$ defines a norm on $M^p(\R)$ equivalent to the $M^p(\R)$ norm given in \eqref{eqn:modulationspace}.

Combining this upper bound on $\|w- w_2 \|_{M^\infty(\R)}$ with the above estimate for $\|\sigma_{H_C}-\sigma_{\widetilde H}\|_{L^\infty(S)}$ and \eqref{eqn:D}, we conclude
 \begin{align*}
 \|\sigma_{H_C}&-\sigma_{\widetilde H}\|_{L^\infty(S)} \\
 	  &\lesssim 
		D \| {H_C} \|_{{\mathcal L}(M^\infty(\R))}   \frac{L^2T}{1-\epsilon}\,  \|   b_{jq}   \|_{\ell^\infty }\!
		 \sum_{qT\notin I_1+[-R(\epsilon),R(\epsilon)] }  \Big\| P_{I_1+[-N(\epsilon),N(\epsilon)]}(x+q)\,  V_{\phi^\ast}r (x,\xi) \Big\|_{L^1(\R^2)}
\\	&\leq  D \epsilon \|{H_C}\|_{{\mathcal L}(M^\infty(\R))} \asymp  D\epsilon \| \sigma_{H_C}\|_{L^\infty(\R^2)} \leq D\epsilon \| \sigma_{H}\|_{L^\infty(\R^2)}\leq D\epsilon\mu.
 \end{align*}
 Choosing $R(\epsilon)$ above large to yield $D$ small enough to compensate all the multiplicative constants, we obtain
\begin{equation*}
 \|\sigma_{H_C}-\sigma_{\widetilde H}\|_{L^\infty(S)} \leq \epsilon\mu.
\end{equation*}

As a meaningful statement is only obtained for $\epsilon<1$, this bound directly implies that 
\begin{equation*}
\|\sigma_{\widetilde H}\|_{L^\infty(\R^2)}\leq 2\mu.
\end{equation*}
Combining this with the bound 
\begin{equation*}
 \|\sigma_H-\sigma_{\widetilde H}\|_{L^\infty(\R^2)}\leq  \|\sigma_H-\sigma_{ H_C}\|_{L^\infty(\R^2)}+ \|\sigma_{H_C}-\sigma_{\widetilde H}\|_{L^\infty(\R^2)} \leq 2\epsilon\mu, 
\end{equation*}
Theorem~\ref{thm:localization1} directly yields the result with a constant of twice the size as in Theorem~\ref{thm:localization1}. \qed

\section{Reconstruction of bandlimited operators from discrete measurements}\label{sec:Op_id}
This section concerns the discrete representation given in Theorem ~\ref{prop:exp}. First, we prove this theorem, hence establishing that indeed this representation is globally exact.

\par{\em Proof of Theorem ~\ref{prop:exp}:}
%
%
%
%
The proof 
is similar to the proof of Theorem~\ref{thm:reconstruction}  given in \cite{PW12}.
The main idea is to use a Jordan domain argument to cover a fixed compact set $M$ of size less than one by shifts of a rectangle that still have combined area less than one and then to combine identifiability results for each of them to obtain identifiability for the whole set. Indeed,   there exist $L$ prime and $T,\Omega>0$ with $T\Omega=\frac{1}{L}$ such that
\begin{align*}
\supp(\eta) \subseteq \bigcup_{j=0}^{L-1} R + ( k_j T ,n_j\Omega)&\subseteq [-(L-1)T/2,(L+1)T/2]\times [-L\Omega /2,L\Omega/2]\notag \\ &= [-1/(2\Omega) +T/2,1/(2\Omega) +T/2] \times [-1/(2T),1/(2T)]
\end{align*}
 where $R=[0,T)\times [-\Omega/2,\Omega/2)$, and the sequence $(k_j, n_j) \in \Z^2$ consists of distinct pairs.
For $\delta > 0$ small enough (and possibly slightly smaller $T,\Omega$, and a larger prime $L$), one can even achieve
\begin{equation*}\label{eq:b1}
M_{\delta} \subseteq \bigcup_{j=0}^{L-1} R+( k_j T,n_j\Omega) \subseteq   [-(L-1)T/2,(L+1)T/2]\times [-L\Omega /2,L\Omega/2]
\end{equation*}
where $M_{\delta}$ 
is the $\delta$-neighborhood of $M$.

Fix such $\delta$ and let $r,\phi\in\mathcal{S}(\R)$ satisfy \eqref{eq:r_phi_1a} and \eqref{eq:r_phi_2a} for this $\delta$.
Clearly,
\begin{equation}\label{eq:r_phi_3}
(k,n)\neq (k_j, n_j) \text{ for all $j$  implies }  S_{\delta} \cap \Big( R+(kT,n\Omega) \Big) = \emptyset \text{ and } \eta(t,\gamma) r(t-kT) \widehat{\phi}(\gamma-n\Omega) = 0,
\end{equation}
a fact that we shall use below.

Define the identifier $
    \ident = \sum_{n\in\Z}c_n\, \delta_{nT}
$,
where $\{c_n\}$ is $L$-periodic and observe that
\begin{align*}
H\ident(x) &= \iint \eta(t,\gamma)\,e^{2\pi i\gamma x}\ident (x-t)\,dt\, d\gamma 
= \iint \boldsymbol{\eta}(t,\gamma)\,e^{2\pi i\gamma(x-t)}\sum_{k\in\Z}c_k\delta_{kT}(x-t)\,dt\, d\gamma \\
&= \sum_{k\in\Z}c_k \int \boldsymbol{\eta}(x-kT,\gamma)\,e^{2\pi i\gamma kT} d\gamma \\
&= \sum_{m\in\Z}\sum_{k=0}^{L-1} c_{k+p} \int \boldsymbol{\eta}(x-(mL+k+p)T,\gamma)\,e^{2\pi i\gamma (mL+k+p)T} d\gamma
\end{align*}
for any $p\in\Z$.
We shall use  the non-normalized Zak transform $Z_{LT}:L^2(\R)\longrightarrow L^2\big([0,LT)\times [-\Omega/2,\Omega/2)\big)$ defined by
\begin{equation*}
Z_{LT} f(t,\gamma) = \sum_{n\in\Z}f(t-nLT)\, e^{2\pi i  n L T \gamma}\,.
\end{equation*}
We compute using the Poisson summation formula and the fact that $\Omega=1/LT$
\begin{align*}
 (Z_{LT}& \circ H)\ident (t,\nu)
= \sum_{n\in\Z}H\ident (t-nLT)\,e^{2\pi i nLT\nu} \\
 &= \sum_{m,n\in\Z}e^{2\pi i T  n L  \nu} \sum_{k=0}^{L-1} c_{k+p} \int \boldsymbol{\eta}(t-(nL+mL+k+p)T,\gamma)\,e^{2\pi i\gamma (mL+k+p)T } d\gamma \\
 &= \sum_{k=0}^{L-1} c_{k+p} \sum_{m,n\in\Z}e^{2\pi i T nL \nu} \int \boldsymbol{\eta}(t-(mL+k+p)T,\gamma)\,e^{2\pi i\gamma T((m-n)L+k+p)} d\gamma \\
&= \sum_{k=0}^{L-1} c_{k+p} \sum_{m\in\Z} \int \boldsymbol{\eta}(t-(mL+k+p)T ,\gamma)\,e^{2\pi i\gamma (mL+k+p)T } \sum_{n\in\Z}e^{2\pi i  n L (\nu-\gamma)T} d\gamma \\
&= \sum_{k=0}^{L-1} c_{k+p} \sum_{m\in\Z} \int \boldsymbol{\eta}(t-(mL+k+p)T ,\gamma)\,e^{2\pi i\gamma (mL+k+p)T } \frac 1 {LT}\sum_{n\in\Z}\delta_{n/LT}(\nu-\gamma) d\gamma \\
& =\Omega \sum_{k=0}^{L-1} c_{k+p} \sum_{m,n\in\Z} \boldsymbol{\eta}(t-(mL+k+p)T ,\nu +  n\Omega)\,e^{2\pi i  (\nu+\Omega n)(mL+k+p)T}
\end{align*}
By (\ref{eq:r_phi_3}) we get for $p=0,\ldots, L-1$,
\begin{align*}
r(t)\widehat{\phi}(\nu)&(Z_{LT}\circ H)\ident(t+pT,\nu)
\\&= \Omega\sum_{k=0}^{L-1} c_{k+p} \sum_{m,n\in\Z} r(t)\widehat{\phi}(\nu)\boldsymbol{\eta}(t-(mL+ k)T,\nu+ n\Omega)
e^{2\pi i T (\nu+n\Omega)(mL+k+p)} \nonumber \\
&=\Omega \sum_{j=0}^{L-1} c_{p+k_j} r(t)\widehat{\phi}(\nu)\boldsymbol{\eta}(t+  k_j T,\nu+ n_j\Omega)\,e^{2\pi i  (\nu+ n_j\Omega)T(p+k_j)}. \nonumber \\
&=\Omega e^{2\pi i\nu  pT} \sum_{j=0}^{L-1} (T^{k_j} M^{n_j}c)_p \Big(e^{2\pi i\nu  k_j T}r(t)\widehat{\phi}(\nu)\, \boldsymbol{\eta}(t+ k_j T,\nu+ n_j\Omega) \Big) \, ,\nonumber
\end{align*}
 where here and in the following, $T:(c_0,c_1,\dots, c_{L-2},c_{L-1})\mapsto (c_{L-1},c_0,\dots, c_{L-3},c_{L-2})$ and $M:(c_0,c_1,\dots, c_{L-2},c_{L-1})\mapsto (e^{2\pi i 0 /L }c_0,\, e^{2\pi i 1 /L }c_1,\dots, \, e^{2\pi i (L-2)/L }c_{L-2},\, e^{2\pi i (L-1)/L }c_{L-1})$, that is, $(T^{k_j} M^{n_j}c)_p=e^{2\pi i \frac{n_j(p+k_j)}{L}}c_{p+k_j}$.
Equivalently, we obtain the matrix equation
\begin{align}\label{eq:a3}
& [e^{- 2\pi i\nu  pT} r(t)\widehat{\phi}(\nu)(Z_{LT}\circ H)\ident(t+pT,\nu)]_{p=0}^{L-1}  \\
& \hspace{1cm} =\Omega \mathbf{A} [e^{2\pi i\nu  k_j T}r(t)\widehat{\phi}(\nu)\boldsymbol{\eta}(t+ k_j T,\nu+ n_j\Omega)]_{j=0}^{L-1} \nonumber
\end{align}
 where $\mathbf{A}$ is a $L\times L$ matrix, whose $j$th column is $T^{k_j}M^{n_j}c \in\C^L$. 
$\mathbf{A}$ is a submatrix of the $L\times L^2$ marix $\mathbf{G}$, whose columns are $\{T^k M^lc\}_{k,l=0}^{L-1}$. It was shown in \cite{KPR08} that if $L$ is prime, then we can choose $c\in\C^L$ such that every $L\times L$ submatrix of $\mathbf{G}$ is invertible. In fact, the set of such $c\in\C^L$ is a dense open subset of $\C^L$ \cite{KPR08}. Hence we can apply the matrix $\mathbf{A^{-1}} =: [b_{jp}]_{j,p=1}^L$ on both sides of Equation (\ref{eq:a3}) to obtain
\begin{align}\label{eq:a4}
& e^{2\pi i\nu  k_j T}r(t)\widehat{\phi}(\nu)\boldsymbol{\eta}(t+ k_j T,\nu+ n_j\Omega) 
 = LT \sum_{p=0}^{L-1}b_{jp} e^{-2\pi i\nu p T}r(t)\widehat{\phi}(\nu)(Z_{LT}\circ H)\ident(t+pT,\nu) 
\end{align}
for every $j=0,1,\dots,L-1$.

In fact, until this point the proof agrees with the proof of \eqref{eqn:reconstructionformula} in Theorem~\ref{thm:reconstruction}. 
Indeed, if we extend $\{b_{jp}\}_p$ to a $L$-periodic sequence by setting $b_{j,p+mL} = b_{jp},$  replace the so far unused property \eqref{eq:r_phi_2a} by \eqref{eq:r_phi_2} then further computations \cite{Pfa10} give
\begin{align}
h(x,t) &= LT  \sum_{j=0}^{L-1} r(t- k_j T)\Big( \sum_{q\in\Z} b_{jq} H\ident(t-(k_j+q)T) \phi(x-t+(k_j+q)T)\Big) e^{2\pi i n_j \Omega (x-t)} \nonumber.
\end{align}

Observe that \eqref{eq:r_phi_2a} implies that $(r,T\Z\times \frac {\Omega L} {\beta_2}\Z)=\{\mathcal T_{kT}\mathcal M_{\ell L \Omega/{\beta_2}}r\}_{k,\ell\in\Z}$ is a tight Gabor frame whenever $\beta_2\geq 1 +2\delta /T$ as, in this case, $(r,\frac{\beta_2}{\Omega L}\Z\times\frac 1 T \Z)=(r,\beta_2 T\Z\times \Omega L\Z)$ is an orthogonal sequence and the Ron-Shen criterion applies \cite{Groechenig01,RS95a}. The same arguments imply that
$(\widehat\phi, \Omega\Z\times\frac {LT} {\beta_1}\Z)$ is a tight Gabor frame. Using a simple tensor argument, we obtain that
$\{\Psi_{m,n,l,k}\}_{m,n,l,k\in\Z}$ forms a tight Gabor frame where
\begin{align*}
\Psi_{m,n,l,k}(t,\nu) &= \mathcal T_{(kT,n\Omega)}\mathcal M_{L(\ell \Omega \beta_2 T ,T/\beta_1)}\ r{\otimes} \widehat{\phi}(t,\nu)\\ &=e^{2\pi i L(\frac{mT(\nu-n\Omega)}{\beta_1}+ \frac{\ell \Omega(t-kT)}{\beta_2})}\ r(t-kT)\ \widehat{\phi}(\nu-n\Omega)\,.\nonumber
\end{align*}
The frame bound is $T\Omega  L^2 T\Omega/(\beta_1\beta_2)=1/(\beta_1\beta_2)$.
We set $\Phi_{m,-n,l,-k}=\mathcal F_s \Psi_{m,n,l,k}$. Clearly, as $\mathcal F_s$ is unitary, we have that $\{\Phi_{m,n,l,k}\}_{m,n,l,k\in\Z}$ forms a tight frame with frame bound $1/(\beta_1\beta_2)$, in fact, a tight Gabor frame as
\begin{align}
\Phi_{m,n,l,k}(x,\xi) &= \mathcal F_s \Psi_{m,-n,l,-k} (x,\xi) \nonumber 
= (\mathcal F \mathcal T_{-kT} \mathcal M_{\ell L \Omega/\beta_2} r) (\xi)\ ( \mathcal F^{-1} \mathcal T_{-n\Omega} \mathcal M_{m T L/\beta_1} \widehat{\phi})(x) \nonumber \\
&= (  \mathcal M_{kT} \mathcal T_{\ell L \Omega/\beta_2} \widehat{r}) (\xi)\ (  \mathcal M_{n\Omega} \mathcal T_{m T L/\beta_1} \phi)(x)\nonumber \\
&= e^{2\pi i(nm+kl)/\lambda} (  \mathcal T_{\ell L \Omega/\beta_2}\mathcal M_{kT} \widehat{r}) (\xi)\ ( \mathcal T_{m T L/\beta_1} \mathcal M_{n\Omega} \phi)(x).\nonumber
\end{align}

Note that (\ref{eq:r_phi_3}) together with the fact that the symplectic Fourier transform is unitary implies that the coefficients in the Gabor frame expansion of $\boldsymbol{\sigma}$ satisfy 
$$ \langle \boldsymbol{\sigma},\Phi_{m,-n_j,l,-k_j}\rangle=\langle \boldsymbol{\eta},\Psi_{m,n,l,k} \rangle = 0 \textnormal{  unless  } (n,k)= (n_j,k_j) \textnormal{ for some } j.$$ 
Hence we need to estimate $\sigma^{(j)}_{m,\ell}= \langle \boldsymbol{\sigma},\Phi_{m,-n_j,l,-k_j}\rangle$ for $j=0,1,\dots,L-1$. We obtain by (\ref{eq:a4})

\begin{align}
\sigma^{(j)}_{m,\ell}&= \langle \boldsymbol{\sigma},\Phi_{m,-n_j,l,-k_j}\rangle
= \langle \boldsymbol{\eta},\Psi_{m,n_j,l,k_j}\rangle \nonumber \\
&=\iint \boldsymbol{\eta}(t,\nu) e^{-2\pi i L(\frac{Tm(\nu-n_j\Omega)}{\beta_1}+ \frac{\ell \Omega(t- k_j T)}{\beta_2})}\ r(t- k_j T)\ \widehat{\phi}(\nu-n_j\Omega) dtd\nu \nonumber \\
&=\iint \ r(t)\ \widehat{\phi}(\nu) \boldsymbol{\eta}(t+ k_j T,\nu+n_j\Omega) e^{2\pi i \nu  k_j T}\ \ e^{-2\pi i (L(\frac{m \nu T}{\beta_1}+ \frac{\ell t \Omega }{\beta_2}) +\nu  k_j T}dtd\nu \nonumber
\\
&=\iint \ LT \sum_{p=0}^{L-1}b_{jp} e^{-2\pi i\nu p T}r(t)\widehat{\phi}(\nu)(Z_{LT}\circ H)\ident(t+pT,\nu) \ e^{-2\pi i (L(\frac{m \nu T}{\beta_1}+ \frac{\ell t \Omega }{\beta_2}) +\nu  k_j T)}dtd\nu \nonumber \\
&=LT \sum_{p=0}^{L-1} b_{jp} \iint \ r(t) \widehat{\phi}(\nu)\ e^{-2\pi i\nu p T} (Z_{LT}\circ H)\ident(t+pT,\nu) \ e^{-2\pi i (L(\frac{m \nu T}{\beta_1}+ \frac{\ell t \Omega }{\beta_2}) +\nu  k_j T)}dtd\nu \nonumber\\
&=LT \sum_{p=0}^{L-1} b_{jp} \iint \ r(t) \widehat{\phi}(\nu)\ e^{-2\pi i\nu p T} \sum_{q\in\Z} H\ident(t+pT-qLT) \ e^{2\pi i \nu qLT} e^{-2\pi i (L(\frac{m \nu T}{\beta_1}+ \frac{\ell t \Omega }{\beta_2}) +\nu  k_j T}dtd\nu \nonumber \\
&=LT \sum_{p=0}^{L-1} b_{jp} \sum_{q\in\Z} \Big(\int r(t) H\ident (t+pT-qLT) e^{-2\pi i L\frac{\ell t \Omega }{\beta_2}} dt\Big)\Big(\int \widehat{\phi}(\nu)\ e^{2\pi i\nu T(qL- p -k_j-mL/\beta_1)}\,d\nu\Big) \nonumber
\\
&=LT  \sum_{q\in\Z} b_{jq} \Big(\int r(t) H\ident(t+qT) e^{-2\pi i L\frac{\ell t \Omega }{\beta_2}} dt\Big)\Big(\int \widehat{\phi}(\nu)\ e^{2\pi i\nu T(-q -k_j-mL/\beta_1)}\,d\nu\Big) \nonumber
\\
&=LT  \sum_{q\in\Z} b_{jq}\phi (T(-q -k_j-mL/\beta_1)) \Big(\int H\ident (t) e^{-2\pi i L\frac{\ell \Omega(t-qT)}{\beta_2}} r(t-qT)dt\Big)\nonumber
\\
&=LT  \sum_{q\in\Z} b_{jq}\phi (T(-q -k_j-mL/\beta_1))\  \langle H\ident, \mathcal T_{qT}\mathcal M_{\ell L \Omega/\beta_2}\, r\rangle,  \nonumber
\end{align}
where $ b_{jq}=b_{jq'}$ for $q=mL+q'$ with $q'=0,1, \ldots, L-1$. We can hence set
\begin{equation*}
C_{q,l}(H\ident) = \langle H\ident, \mathcal T_{qT}\mathcal M_{\ell L \Omega/\beta_2}\, r\rangle.
\end{equation*}
To sum up, 
\begin{align}
\boldsymbol{\sigma}(x,\xi)
&= \frac{1}{\beta_1\beta_2}\sum_{j=0}^{L-1}\sum_{m,\ell\in\Z} \langle \boldsymbol{\sigma},\Phi_{m,-n_j,l,-k_j} \rangle \Phi_{m,-n_j,l,-k_j}(x,\xi) \nonumber \\
&= \frac{LT}{\beta_1\beta_2}\sum_{j=0}^{L-1} e^{-2\pi i(xn_j\Omega + \xi  k_j T )} \sum_{m,\ell\in\Z} \sigma^{(j)}_{m,\ell} \ \widehat{r}\big(\xi-\frac{\ell L \Omega}{\beta_2}\big)\ \phi\big(x - \frac{m T L}{\beta_1}\big)\,, \label{eq:03}
\end{align}
where
\begin{align*}
\sigma^{(j)}_{m,\ell} = \sum_{q\in\Z}\  b_{jq}\ \phi (a(-q -k_j-mL/\beta_1))\ C_{q,l}(Hw).
\end{align*}

Applying the symplectic Fourier transform to  \eqref{eq:03} yields
\begin{align}
\eta(t,\nu)&=e^{-2\pi i \nu t}\boldsymbol{\eta}(t,\nu)\nonumber
= e^{-2\pi i \nu t}\, \frac{LT}{\beta_1\beta_2}\sum_{j=0}^{L-1} \sum_{m,\ell\in\Z} \sigma^{(j)}_{m,\ell}\  \mathcal F_s \ \Big( \mathcal M_{(-n_j\Omega, - k_j T )} \ \mathcal T_{(\frac{m T L}{\beta_1},\frac{\ell L \Omega}{\beta_2})}\phi{\otimes}\widehat{r} \Big) (t,\nu)\nonumber
\\
&= e^{-2\pi i \nu t}\ \frac{LT}{\beta_1\beta_2}\sum_{j=0}^{L-1} \sum_{m,\ell\in\Z} \sigma^{(j)}_{m,\ell}\  \mathcal T_{(  k_j T ,-n_j\Omega)} \ \mathcal M_{(\frac{\ell L \Omega}{\beta_2},-\frac{m T L}{\beta_1})}\   r{\otimes}\widehat{\phi}\,(t,\nu) \nonumber
\\
&=  \frac{LT}{\beta_1\beta_2}\sum_{j=0}^{L-1} \sum_{m,\ell\in\Z} \sigma^{(j)}_{m,\ell}\  \mathcal T_{(  k_j T ,-n_j\Omega)} \ \mathcal M_{(\frac{\ell L \Omega}{\beta_2},-\frac{m T L}{\beta_1})}\   \Big(r{\otimes}\widehat{\phi}\,(t,\nu)\, e^{-2\pi i (\nu+n_j\Omega)( t-k_jT)}\Big), \nonumber
\\
&=  \frac{LT}{\beta_1\beta_2}\sum_{j=0}^{L-1} \sum_{m,\ell\in\Z} \sigma^{(j)}_{m,\ell}\,e^{2\pi i n_j\Omega k_j T}\  \mathcal T_{(  k_j T ,-n_j\Omega)} \ \mathcal M_{(\frac{\ell L \Omega}{\beta_2}-n_j\Omega,\, k_jT-\frac{m T L}{\beta_1})}\  \Big( r{\otimes}\widehat{\phi}\,(t,\nu)\, e^{-2\pi i \nu t}\Big). \nonumber
\end{align}
For $U(t,\nu)= r{\otimes}\widehat{\phi}\,(t,\nu)\, e^{-2\pi i \nu t}$, we have
\begin{align*}
\mathcal F_s U (x,\xi)&=  \iint r(t) \widehat{\phi}(\nu) e^{-2\pi i \nu t } e^{ -2\pi i (\xi t-\nu x) } \, d\nu \, dt \\
    &=\int r(t) \phi(x-t)e^{ -2\pi i \xi t } \, dt=\int r(t) \overline{\phi}(t-x)e^{ -2\pi i \xi t } \, dt=V_\phi r(x,\xi),
\end{align*}
where we used that $\widehat{\phi}$ real valued implies $\phi(y)=\overline \phi(-y)$. Now, we compute
\begin{align}
\sigma(x,\xi)&= \mathcal F_s \eta\,(x,\xi) \nonumber 
\\
&=  \frac{LT}{\beta_1\beta_2}\sum_{j=0}^{L-1} \sum_{m,\ell\in\Z} \sigma^{(j)}_{m,\ell}\,e^{2\pi i n_j\Omega k_j T}\  \mathcal F_s \ \Big( \mathcal T_{(  k_j T ,-n_j\Omega)} \ \mathcal M_{(\frac{\ell L \Omega}{\beta_2}-n_j\Omega,\, k_jT-\frac{m T L}{\beta_1})}\  U\Big)\, (x,\xi), \nonumber
\\
&=  \frac{LT}{\beta_1\beta_2}\sum_{j=0}^{L-1} \sum_{m,\ell\in\Z} \sigma^{(j)}_{m,\ell}\,e^{2\pi i n_j\Omega k_j T}\  \ \mathcal M_{(-n_j\Omega,- k_j T)} \ \mathcal T_{(\frac{m T L}{\beta_1}-k_jT, \, \frac{\ell L \Omega}{\beta_2}-n_j\Omega)}\,  V_\phi r\, (x,\xi), \nonumber
\\
&= \frac{LT}{\beta_1\beta_2}\sum_{j=0}^{L-1} e^{-2\pi i(xn_j\Omega + \xi  k_j T )} e^{2\pi i n_j\Omega k_j T}\sum_{m,\ell\in\Z} \sigma^{(j)}_{m,\ell} \ V_\phi r(x - \frac{m T L}{\beta_1}+k_jT, \ \xi-\frac{\ell L \Omega+n_j\Omega}{\beta_2}\big).\label{eq:03B}
\end{align}
The convergence in  \eqref{eq:03} and  \eqref{eq:03B} is defined in the weak sense, but can be shown to converge absolutely and  uniformly on compact subsets. \qed

Next we prove Theorem~\ref{thm:localsamp}, that is, the direct local correspondence between the discretization values and the operator action.

{\it Proof of Theorem~\ref{thm:localsamp}.} \ 
We intend to apply Theorems~\ref{thm:localization1} and \ref{prop:exp}. We assume that the set $M$ as well as its enclosing rectangular grid are fixed, hence also the parameters $T$, $\Omega$, and $L$. The dependence of the constants, auxiliary functions, etc., in the following derivations on these parameters will be suppressed for notational convenience; this should be seen as analogue to the one-dimensional scenario where the arising constants also depend on the shape and not just the size of the frequency support. Furthermore, set $Q=\max(LT, L\Omega)$.

We can bound using \eqref{eqn:iso}
\begin{equation}\label{eq:coeffbd}
\begin{aligned}
 |\sigma^{(j)}_{m,\ell}| &= |\langle {\boldsymbol \sigma}, \Phi_{m, -n_j, \ell, -k_j}\rangle|
\leq \|\boldsymbol \sigma\|_\infty \|\Phi_{m, -n_j, \ell, -k_j}\|_1
\leq B\|\sigma\|_\infty   \|\hat r \otimes \phi \|_1
\leq \tilde B \mu 
\end{aligned}
\end{equation}
For the second inequality, we used that the $L_1$-norm is invariant under translations and modulations.

Furthermore, note that $V_\phi r \in {\mathcal{S}}(\R^2)$, so there is a decreasing positive function $\rho\in\S([0,\infty))$ such that for $\widetilde\rho(x,\xi)=\rho\Big(|x|\Big)\rho\Big(|\xi|\Big)$ one has  $|V_\phi r |\leq \frac{1}{8\tilde C T}\widetilde \rho$ pointwise. 

Now observe that, as $\rho$ is decreasing,
\begin{equation*}
 \sum\limits_{j=0}^\infty \alpha \rho(\alpha j)\leq \rho(0)+\sum\limits_{j=1}^\infty \int\limits_{\alpha(j-1)}^{\alpha j} \rho(t)dt =\|\rho\|_1+\|\rho\|_\infty.
\end{equation*}

We use this estimate to bound for arbitrary $(x,\xi)$
\begin{align*}
 |\tilde \sigma (x&,\xi)|\\=&\Big|\frac{LT}{\beta_1 \beta _2} \sum_{j=0}^{L-1} e^{-2\pi i(xn_j\Omega + \xi  k_j T )} e^{2\pi i n_j\Omega k_j T} \hspace{-.6cm} \sum_{(mLT/\beta_1,\ell L\Omega/\beta_2)\in S}  \hspace{-.6cm} \sigma^{(j)}_{m,\ell} V_\phi r(x - \big(\frac{m  L}{\beta_1}+k_j\big) T, \ \xi-\big(\frac{\ell L }{\beta_2}+n_j\big)\Omega\big) \Big| \\
\leq& \frac{LT}{\beta_1 \beta _2}  \sum_{(mLT/\beta_1,\ell L\Omega/\beta_2)\in S} |\sigma^{(j)}_{m,\ell}| \Big|V_\phi r(x - \big(\frac{m  L}{\beta_1}+k_j\big) T, \ \xi-\big(\frac{\ell L }{\beta_2}+n_j\big)\Omega\big)\Big| \\
\leq &  \frac{L T}{\beta_1 \beta _2}\sum_{j=0}^{L-1} \sum_{(mLT/\beta_1,\ell L\Omega/\beta_2)\in S} \frac{\tilde C \mu}{8\tilde C T}\  \rho\Big(\Big| x - \big(\frac{m  L}{\beta_1}+k_j\big) T\Big|\Big) \ \rho\Big(\Big|\xi-\big(\frac{\ell L }{\beta_2}+n_j\big)\Omega\big)\Big|\Big)\\
\leq &  \frac{\mu}{8 L\Omega T}\sum_{j=0}^{L-1} 4 \sum_{m,\ell=0}^\infty \  \frac{L T}{\beta_1}\rho\Big(\frac{m  L}{\beta_1} T\Big) \ \frac{L \Omega}{\beta _2}\rho\Big(\frac{\ell L }{\beta_2}\Omega\Big)
\leq  \frac{ \mu}{2} \Big(\|\rho\|_1+\|\rho \|_\infty\Big)^2
\end{align*}
and hence
\begin{equation*}
  \|\sigma-\tilde\sigma\|_\infty  \leq \|\sigma\|_\infty 
  	+\|\tilde\sigma\|_\infty \leq \mu+\frac{ \mu}{2} \Big(\|\rho\|_1+\|\rho \|_\infty\Big)^2=:C_1\mu.
\end{equation*}

By the definition of $\S$, for every $\delta>0$, there is a constant $C(\delta)$ such that for any fixed $0\leq j< L$,
\begin{align}
{\delta}\geq& 8\|\rho\|_{L^1(\R^+)}\|\rho\|_{L^1[C(\delta)-2Q,\infty)}\geq 8\|\widetilde\rho\|_{L_1(([-C(\delta)+Q, C(\delta)-Q]^2)^c)}\label{eq:schwartz},
\end{align}
and hence, for $(x,\xi)\in S-B(C(\delta))$,
\begin{align}
\delta \geq& 
 \frac{L^2}{\beta_1\beta_2}\sum\limits_{\substack{\ell,m \in\Z\\ \Big(x - \frac{m  L}{\beta_1} T, \xi-\frac{\ell L }{\beta_2}\Omega\Big) \notin [-C(\delta), C(\delta)]^2
 }}\rho\Big(\Big|x - \Big(\frac{m  L}{\beta_1}+k_j\Big) T\Big|\Big)\rho\Big(\Big|\xi-\Big(\frac{\ell L }{\beta_2}+n_j\Big)\Omega\Big|\Big).\label{eq:samplingtails}
\end{align}
To obtain \eqref{eq:samplingtails} from \eqref{eq:schwartz}, the boundary term in the discretization of the integral and the shifts by $k_j$ and $n_j$, respectively, are each compensated by increasing the dimensions of the integration/summation domain by $LT$ and $L\Omega$ in time and frequency, respectively, both of which are bounded by $Q$.

Note furthermore that, as $(x, \xi)\in S-B(C(\delta))$, a necessary condition for 
\begin{equation*}
\Big(x - \frac{m  L}{\beta_1} T, \xi-\frac{\ell L }{\beta_2}\Omega\Big) \notin [-C(\delta), C(\delta)]^2 
\end{equation*}
is that
\begin{equation*}
 {(mLT/\beta_1,\ell L\Omega/\beta_2)\notin S}.
\end{equation*}

Thus, using \eqref{eq:coeffbd} and the triangle inequality, we can bound \eqref{eq:samplingtails} from below obtaining
\begin{equation}\delta\mu \geq \Big|\frac{L^2 T}{\beta_1\beta_2}\sum\limits_{  {(mLT/\beta_1,\ell L\Omega/\beta_2)\notin S}} \sigma^{(j)}_{m,\ell} V_\phi r\Big(x - \Big(\frac{m  L}{\beta_1}+k_j\Big) T,\ \xi-\Big(\frac{\ell L }{\beta_2}+n_j\Big)\Omega\Big)\Big|. \label{eq:smallstf}
\end{equation}

Hence forming a weighted average (with complex weighting factors of modulus one) of Equation~\eqref{eq:smallstf} over the $L$ choices of $j$, we obtain
\begin{align*}
   \delta\mu \geq& \Big|\frac{LT}{\beta_1\beta_2}\sum_{j=0}^{L-1}  \!e^{-2\pi i(xn_j\Omega + \xi  k_j T )} e^{2\pi i n_j\Omega k_j T} \!\!\!\!\sum\limits_{   {(\frac{mLT}{\beta_1},\frac{\ell L\Omega}{\beta_2})\notin S}} \!\!\!\! \sigma^{(j)}_{m,\ell} V_\phi r\Big(x - \Big(\frac{m  L}{\beta_1}\!+\!k_j\Big) T,\ \xi-\Big(\frac{\ell L }{\beta_2}\!+\!n_j\Big)\Omega\Big)\Big|\\
=&|\sigma(x,\xi)-\tilde\sigma(x,\xi)|.
\end{align*}
This yields $\|\sigma-\tilde\sigma\|_{L_\infty(S-B(C(\delta)))}\leq \delta\mu$. Hence by Theorem~\ref{thm:localization1}, we conclude that
\begin{equation*}
 \|H f-\tilde H f\|_2\leq C \frac{\delta}{C_1} \mu
\end{equation*}
for all functions $f$ which are $\frac{\delta}{C_1}$-time-frequency-localized to $S-B(C(\delta))-B(d(\epsilon))$.
The result follows by choosing $\delta= \min\Big(\frac{C_1}{C}\epsilon, C_1\epsilon\Big)$ and $D(\epsilon)=C(\delta)+d(\epsilon)$. \qed

\subsection*{Acknowledgments}
The authors thank the anonymous referee for the constructive comments, which greatly improved the paper, and Onur Oktay, who participated in initial discussions on the project.
Part of this research was carried out during a sabbatical of G.E.P.  and a stay of F.K. at the Department of Mathematics and the Research Laboratory for Electronics at the Massachusetts Institute of Technology.
Both are grateful for the support and the stimulating research environment.
F.K. acknowledges support by the Hausdorff Center for Mathematics, Bonn.
G.E.P. acknowledges funding by the German Science Foundation (DFG) under Grant 50292 DFG PF-4, Sampling Operators.



\bibliographystyle{amsplain}
\bibliography{OpSigDel}
\end{document}